\def\Z{\mathbb{Z}}
\def\R{\mathbb{R}}
\def\Q{\mathbb{Q}}
\def\S{\mathbb{S}}
\def\E{\mathcal{E}}
\def\V{\mathcal{V}}
\def\B{\mathcal{B}}
\def\F{\mathcal{F}}
\def\RR{\mathcal{R}}
\newtheorem{theo}{\bf{Theorem}}[section]
\newtheorem{lem}[theo]{Lemma}
\newtheorem{cor}[theo]{Corollary}
\newtheorem{prop}[theo]{Proposition}
\newtheorem{prob}[theo]{Problem}
\newtheorem{conj}[theo]{Conjecture}
\theoremstyle{definition}
\newtheorem{defi}[theo]{Definition}
\newtheorem{rem}[theo]{Remark}
\newtheorem{exa}[theo]{Example}
\DeclareMathOperator{\rank}{rank}
\DeclareMathOperator{\lk}{lk}
\DeclareMathOperator{\antist}{ast}
\DeclareMathOperator{\GL}{GL}
\DeclareMathOperator{\Span}{Span}
\DeclareMathOperator{\lex}{lex}
\DeclareMathOperator{\Lker}{Lker}
\DeclareMathOperator{\row}{row}
\begin{document}
\title{Bipartite Rigidity}
\author{Gil Kalai}
\address{{Institute of Mathematics, Hebrew University of Jerusalem,
Jerusalem 91904, Israel}
{\tiny and}
{Department of Computer Science and  Department of Mathematics,
Yale University, New Haven, CT 06511, USA}}
\email{kalai@math.huji.ac.il}

\author{Eran Nevo}
\address{
Department of Mathematics,
Ben Gurion University of the Negev,
Be'er Sheva 84105, Israel
}
\email{nevoe@math.bgu.ac.il}

\author{Isabella Novik}
\address{
Department of Mathematics, Box 354350,
University of Washington, Seattle, WA 98195-4350, USA
}
\email{novik@math.washington.edu}

\thanks{Research of the first author was partially supported by
ERC advanced grant 320924, ISF grant 768/12, and NSF grant DMS-1300120,
of the second author by Marie Curie grant IRG-270923 and ISF grant 805/11,
and of the third author by NSF grant DMS-1069298.
The first author also acknowledges the Simons Institute for the Theory of Computing.}

\begin{abstract}
We develop a bipartite rigidity theory for bipartite
graphs parallel to the classical  rigidity theory for general graphs,
and define for two positive integers $k,l$ the notions of
$(k,l)$-rigid and $(k,l)$-stress free bipartite graphs.
This theory coincides with the study of Babson--Novik's balanced
shifting restricted to graphs. We establish bipartite analogs of
the cone, contraction, deletion, and gluing lemmas, and apply these
results to derive a bipartite analog of the rigidity criterion for planar
graphs. Our result asserts that for a planar bipartite graph $G$
its balanced shifting, $G^b$, does not contain $K_{3,3}$; equivalently,
planar bipartite graphs are generically $(2,2)$-stress free.
We also discuss potential applications of this
theory to Jockusch's cubical lower bound conjecture and to
upper bound conjectures for embedded simplicial complexes.
\end{abstract}

\maketitle

\section{Introduction}\label{sec:Intro}
\subsection{Three basic properties of planar graphs}

We start with three important results on planar graphs
and a motivating conjecture in a higher dimension:

\begin{prop}[Euler, Descartes]\label{prop:EulerIneq}
A simple planar graph with $n\geq 3$ vertices has at most $3n-6$ edges.
\end{prop}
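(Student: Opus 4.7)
The plan is to deduce the bound from Euler's polyhedral formula $V-E+F=2$ via a double-counting argument relating face boundaries to edges. First I would reduce to the connected case: adding edges to connect the components only increases $E$, so proving the inequality for connected planar graphs suffices. If the resulting connected graph is a tree, then $E=V-1\leq 3V-6$ for $V\geq 3$, and we are done; otherwise $G$ contains a cycle, so every face of a planar embedding in $\S^2$ has a boundary walk of length $\geq 3$ (here simplicity rules out digon boundaries, and the presence of a cycle rules out degenerate faces arising from bridges).

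Next I would apply the face-edge incidence count $\sum_f \deg(f) = 2E$ (each edge is traversed twice total across all face boundaries), and combine with the face-degree lower bound $\deg(f)\geq 3$ to obtain $3F\leq 2E$, i.e.\ $F\leq 2E/3$. Substituting into Euler's formula gives
\[
2 = V - E + F \leq V - E + \tfrac{2E}{3} = V - \tfrac{E}{3},
\]
which rearranges to $E\leq 3V-6$, as desired.

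A cleaner alternative I would consider is to extend $G$ greedily by adding edges while preserving planarity until obtaining a maximal planar graph (a triangulation), which for $n\geq 3$ has every face (including the unbounded one) bounded by a triangle; then $2E=3F$ exactly, and Euler's formula yields $E = 3V-6$. Since extending can only increase the edge count, the bound follows for the original $G$.

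The main obstacle in either route is not algebraic but conceptual: one must invoke Euler's formula for connected planar graphs (or equivalently for $2$-spheres via polyhedral decomposition), and carefully justify the face-degree bound in the presence of bridges or small components. Both of these are standard, so once the reduction to a triangulation (or to a graph containing a cycle) is in place, the rest is a one-line calculation.
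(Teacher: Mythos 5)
Your proof is correct and is precisely the ``simple consequence of Euler's theorem'' that the paper alludes to without writing out: the paper gives no proof of Proposition~\ref{prop:EulerIneq}, merely remarking that it follows from Euler's formula. Your reduction to the connected case, the double-count $\sum_f \deg(f) = 2E$ with $\deg(f)\geq 3$, and the substitution into $V-E+F=2$ are the standard and correct route, and your alternative via completion to a triangulation is equally sound.
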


\begin{prop}[Wagner, Kuratowski (easy part)]
A planar graph does not contain $K_5$ and $K_{3,3}$ as minors.
\end{prop}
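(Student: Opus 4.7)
The plan is to reduce the statement to the non-planarity of the two specific graphs $K_5$ and $K_{3,3}$, using the standard observation that planarity is preserved under taking minors. More precisely, I would first argue that if $G$ is planar then every subgraph of $G$ is planar, and that contracting an edge of a planar graph along a short arc in a planar embedding yields a planar embedding of the contracted graph. Combining these, any minor of a planar graph is planar. Hence it suffices to show that $K_5$ and $K_{3,3}$ are themselves not planar.

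For $K_5$ the argument is immediate from Proposition \ref{prop:EulerIneq}: with $n=5$ vertices and $\binom{5}{2}=10$ edges, planarity would force $10 \leq 3n-6 = 9$, a contradiction.

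For $K_{3,3}$ the bound $3n-6$ is not tight enough ($3\cdot 6-6 = 12 > 9$), so I would establish and apply a bipartite refinement of Proposition \ref{prop:EulerIneq}: a simple planar bipartite graph with $n\geq 3$ vertices has at most $2n-4$ edges. This is proved by the same double-count as Euler's inequality, but using the fact that every face in a planar embedding of a bipartite (hence triangle-free) graph is bounded by at least $4$ edges, so $2e \geq 4f$; feeding this into $n-e+f=2$ gives $e \leq 2n-4$. Applied to $K_{3,3}$ with $n=6$ and $e=9$, this yields $9 \leq 2\cdot 6-4 = 8$, a contradiction.

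The only real content is the face-length estimate used in the $K_{3,3}$ case, and the only mild subtlety is verifying that edge contraction can be realised inside a planar embedding (a routine topological observation, handled by contracting along a small neighborhood of the edge). Everything else is a direct application of Euler's formula, so I do not anticipate a genuine obstacle.
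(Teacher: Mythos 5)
Your proof is correct and is the standard textbook argument; the paper itself does not give a proof of this proposition, merely citing Wagner's paper for it as a classical fact. Both steps — that minors of planar graphs remain planar, and the Euler-formula edge counts ($3n-6$ for general, $2n-4$ for bipartite/triangle-free) ruling out $K_5$ and $K_{3,3}$ — are exactly the expected argument, and the bipartite bound $2n-4$ you derive is precisely the one the paper records later as Lemma \ref{lem:2n-4}.
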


The first result (that can be traced back to Descartes) is a
simple consequence of Euler's theorem. The second result is the
easy part of Wagner's characterization of planar graphs \cite{Wagner:Kuratowski}
which asserts that not having $K_5$ and $K_{3,3}$ as minors
characterizes planarity.

We will now state a third fundamental result on planar graphs.
This requires some definitions. An embedding of a graph $G$ into $\R^d$
is a map assigning a vector $\phi (v)\in\R^d$ to every vertex $v$.
The embedding is {\em stress-free} if there is no way
to assign weights $w_{uv}$ to edges, so that not all weights
are equal to zero and every vertex is ``in equilibrium'':
\begin{equation}\label{eq:st-free}
\sum_{v \ :\  uv \in E(G)}
w_{uv} (\phi (u)-\phi (v)) = 0 \quad \mbox{for all } u.
\end{equation}

\noindent The embedding is \emph{infinitesimally rigid} if every assignment
of velocity vectors $V(u)\in\R^d$ to vertices of $G$ that satisfies
\begin{equation} \label {e:iv}
\langle V(v)-V(u),\phi(v)-\phi(u)\rangle=0
\end{equation}
 for every $uv \in E(G),$ must satisfy relation (\ref{e:iv})
for {\em every pair} of vertices.\footnote{Relation (\ref{e:iv})
asserts that the velocities respect (infinitesimally) the distance
along an embedded edge. If these relations apply to all pairs of
vertices the velocities necessarily come\emph{}
 from a rigid motion of the entire space.}

\begin{prop}[Gluck, Dehn, Alexandrov, Cauchy]   \label{Prop1.3:Gluck}
A generic embedding of a simple planar graph in $\R^3$
is stress free. A generic embedding of a maximal simple planar
graph in $\R^3$ is also infinitesimally rigid.
\end{prop}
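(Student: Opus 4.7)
The plan is to recast both assertions as statements about the rank of the rigidity matrix and then reduce everything to a single classical input, namely the infinitesimal rigidity of convex simplicial $3$-polytopes. Given an embedding $\phi\colon V(G)\to\R^3$, form the $|E(G)|\times 3n$ rigidity matrix $R(G,\phi)$ whose row indexed by an edge $uv$ has $\phi(u)-\phi(v)$ in the block of $u$, $\phi(v)-\phi(u)$ in the block of $v$, and zeros elsewhere. Stress-freeness of $(G,\phi)$ is exactly the linear independence of the rows of $R(G,\phi)$, while infinitesimal rigidity is $\rank R(G,\phi)=3n-6$, the $6$-dimensional deficit coming from the trivial infinitesimal motions generated by the Euclidean group on $\R^3$. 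Both are Zariski-open conditions on $\phi$, so to establish each generically it suffices to exhibit a single embedding that witnesses the corresponding maximal rank.

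Next, I would argue that both halves of the proposition reduce to the single claim that \emph{every maximal simple planar graph $G$ admits one embedding $\phi\colon V(G)\to\R^3$ with $\rank R(G,\phi)=3n-6$.} Indeed, by Proposition~\ref{prop:EulerIneq} a maximal simple planar graph has exactly $3n-6$ edges, so such a $\phi$ simultaneously makes the rows independent (stress-freeness) and attains the rigidity rank (infinitesimal rigidity). For an arbitrary simple planar graph $G$, complete $G$ on the same vertex set to a maximal planar graph $\widetilde G\supseteq G$ by triangulating the faces of a planar drawing; then $R(G,\phi)$ is a row-submatrix of $R(\widetilde G,\phi)$, so the generic independence of rows for $\widetilde G$ immediately descends to $G$.

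To produce the promised embedding, I would invoke Steinitz's theorem to realize the given maximal simple planar graph $G$ as the $1$-skeleton of the boundary of a simplicial convex $3$-polytope $P$, and take $\phi$ to be the vertex map of $P$. At this point the central input is the Cauchy--Dehn--Alexandrov theorem: the edge framework of the boundary of a convex simplicial $3$-polytope is infinitesimally rigid, which says precisely that $\rank R(G,\phi)=3n-6$. This step is the main obstacle, as it rests on the delicate sign-change argument on the $2$-sphere (bounding the number of sign changes of dihedral-angle variations around each vertex) that underlies Cauchy's original theorem and its infinitesimal upgrade; I would simply cite it rather than reprove it.

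The final step is Gluck's packaging argument. Having one embedding at which $\rank R(G,\phi)$ achieves the maximal value $3n-6$, lower semi-continuity of rank shows that the set of embeddings at which this maximum is attained is a nonempty Zariski-open subset of $(\R^3)^{V(G)}$. Hence a generic embedding of a maximal simple planar graph in $\R^3$ is infinitesimally rigid, and, by the reduction in the second paragraph, a generic embedding of \emph{any} simple planar graph in $\R^3$ is stress-free, which is exactly Proposition~\ref{Prop1.3:Gluck}.
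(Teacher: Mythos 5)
Your proposal is correct, and it matches exactly what the paper has in mind: the paper does not prove Proposition~\ref{Prop1.3:Gluck} but instead cites Gluck's argument, which (as the paper itself notes in Section~4) combines Steinitz's theorem with the Dehn--Alexandrov infinitesimal rigidity of convex simplicial $3$-polytopes, together with the reduction to the maximal case and the Zariski-openness of the maximal-rank condition. Your write-up is a faithful reconstruction of that standard route.
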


This result of Gluck \cite{Gluck} is closely related to Cauchy's rigidity
theorem for polytopes of dimension three, and is easily derived from
its infinitesimal counterpart by Dehn and Alexandrov.
We refer our readers to \cite{Connelly:RigiditySurvey, Pak-book}
for an exposition and further references. The above three
results on planar graphs also have interesting inter-connections.

In this paper we consider extensions of these three results
to bipartite graphs. The extensions to bipartite graphs are of
interest on their own and they are also offered as an approach
toward hard higher-dimensional generalizations such as the
following conjecture that in a slightly different
form was raised as a question by Gr\"unbaum \cite[Section 3.7]{Grunbaum-70}.
For a simplicial complex $K$ let $f_i(K)$ denote the number of
$i$-dimensional faces of $K$.

\begin{conj}\label{c:ksw}
There is an absolute constant $C$ such that a $2$-dimensional
simplicial complex $K$ embedded in $\R^4$ satisfies
\[ f_2(K) \le C f_1(K). \]
\end{conj}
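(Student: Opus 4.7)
The plan is to derive Conjecture \ref{c:ksw} from a bipartite rigidity statement in the spirit of Proposition \ref{Prop1.3:Gluck}. Given a 2-dimensional simplicial complex $K$ embedded in $\R^4$, I would form the bipartite graph $B(K)$ with parts $E(K)$ (the edges of $K$) and $F(K)$ (the 2-faces of $K$), placing a bipartite edge between $e$ and $f$ whenever $e \subset f$. Each 2-face contains exactly three edges, so the total number of bipartite edges is $3 f_2(K)$. Consequently, any inequality of the form ``number of edges of $B(K)$ is at most $\alpha f_1(K) + \beta f_2(K) - \gamma$'' with $\beta < 3$ would rearrange to $f_2(K) \le C f_1(K)$, which is exactly the desired statement.

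Next I would try to produce from the embedding $\phi \colon K \hookrightarrow \R^4$ a generic bipartite realization of $B(K)$: attach to each edge vertex of $B(K)$ a datum of the corresponding edge $e=\{u,v\}$ of $K$ (for instance the direction $\phi(u)-\phi(v)$ or a Pl\"ucker-coordinate vector), and to each face vertex of $B(K)$ the affine 2-plane spanned by $\phi(f)$. A \emph{bipartite stress} on $B(K)$ would be a collection of weights $w_{ef}$ on incident pairs subject to an equilibrium condition coupling the edge-data and the face-data, designed so that a nontrivial stress forces a nontrivial 2-chain supported on $K$ which bounds in $\R^4$. The core step is then to show that for generic $\phi$ the bipartite graph $B(K)$ is $(k,l)$-stress free in this sense for some fixed pair with $l<3$; by the bipartite rigidity bound of the paper this would yield $3 f_2(K) \le k f_1(K) + l f_2(K) - kl$, hence the conjecture. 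Natural target pairs are $(k,l)=(3,2)$ or $(4,2)$, chosen so that the trivial bipartite motions account for $kl$ degrees of freedom.

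The main obstacle lies in this second step: isolating the correct bipartite rigidity matrix for a 2-complex in $\R^4$, i.e.\ the right analog of Gluck's rigidity matrix, and proving that its left-kernel is trivial for generic embeddings. Here the embedding hypothesis is essential, since abstract 2-complexes with $f_2 \gg f_1$ exist in abundance, so the argument must at some point use a genuinely 4-dimensional input such as the Van Kampen obstruction to embedding in $\R^4$ or the generic linking form on 1-cycles of $K$. My strategy would be to reduce to explicit building blocks via the cone, contraction, deletion, and gluing lemmas of the paper, using the planar bipartite rigidity criterion (the main theorem here) as the 2-dimensional base case of an inductive argument. The decisive choice is the precise form of the bipartite stress equation; once that equation correctly encodes the 4-dimensional embedding, the combinatorial machinery developed in the paper should carry the proof through to a linear bound with an explicit constant $C$.
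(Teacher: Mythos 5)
Conjecture~\ref{c:ksw} is presented as an open problem; the paper proves no such statement, so there is no proof to compare your attempt against. What the paper offers is a \emph{conditional} route: the $d=2$ case of Conjecture~\ref{conj:embed-1} (equivalently Conjecture~\ref{conj:balancedKalaiSarkaria}) would imply the Euler-type bound with $C=9$, via the random-coloring reduction of Proposition~\ref{prop:Heawood-ineq}, while Conjecture~\ref{2-dimKS} (symmetric shifting) would give the sharper $C=4$. Your submission is likewise only a proposal, and you are candid about that: you say outright that the decisive step --- a bipartite stress equation for a $2$-complex in $\R^4$ whose left kernel is generically trivial --- is missing. So this is not a proof of Conjecture~\ref{c:ksw}, nor does the paper contain one.

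Your proposed route is related in spirit to the paper's conditional route but differs in one substantive respect. The paper's analog of your edge--face incidence graph $B(K)$ is the facet--ridge matrix $M(K,l)$ of Section~\ref{sec:balancedKalai-Sarkaria}: for $d=2$, $l=2$, it has rows indexed by $2$-faces and column blocks indexed by edges, and Lemma~\ref{does_not_contain} equates the independence of its rows with $K^b$ avoiding $[3]^{*3}$, hence with $f_2\le 2f_1$. Crucially, the $(F,G)$ block of $M(K,l)$ is $\theta_{F-G}$, depending on the \emph{vertex} of $K$ by which facet $F$ extends ridge $G$; it therefore records more of $K$ than the abstract bipartite graph $B(K)$ does. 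Treating $B(K)$ purely with the paper's $(k,l)$-rigidity matroid cannot use the embedding --- that theory is an isomorphism invariant of the bipartite graph, and abstract $2$-complexes with huge $f_2/f_1$ have edge--face graphs that are perfectly fine bipartite graphs. You are right that a tailored stress equation tied to $\phi$ is required; the paper sidesteps the problem by conjecturing a compatibility between balanced shifting and embeddability (Conjectures~\ref{conj:eqBalancedKalaiSarkaria} and~\ref{conj:vanKampenBalancedKalaiSarkaria}) rather than exhibiting such an equation.

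Two further concrete issues. First, once you design a custom rigidity matrix, the trivial-stress dimension $kl$ in the bound $3f_2\le lf_1+kf_2-kl$ is no longer automatic; it is a property of the generic $(k,l)$-rigidity matrix, and you would have to re-derive the corresponding deficiency for your matrix (also note your $k$ and $l$ are swapped relative to the paper's convention $|E|\le l|A|+k|B|-kl$ with $A=E(K)$, $B=F(K)$, so you need the coefficient multiplying $f_2$, the paper's $k$, to be $<3$). Second, the inductive skeleton you sketch --- reducing to the planar bipartite base case via the cone, contraction, deletion and gluing lemmas --- does not have an obvious foothold: those lemmas manipulate a balanced complex one colored vertex at a time, not one $2$-face of an embedded $2$-complex at a time, and it is not clear what the building blocks or the induction parameter would be. As it stands, the proposal reproduces, in a different guise, exactly the open content of the paper's conjectures.
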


\subsection{Bipartite rigidity and a bipartite analog of Gluck's theorem}

We develop a bipartite analog for the rigidity theory of graphs.

A {\em $(k,l)$-embedding} of a bipartite
graph $G=(A\uplus B, E)$ is a map $\phi: A\uplus B \to \R^{k}\times \R^{l}$
that assigns to every $a\in A$ a vector $\phi(a)\in\R^k\times(0)$,
and to every $b\in B$ a vector $\phi(b)\in (0)\times \R^l$.
A $(k,l)$-embedding $\phi$ of a bipartite graph $G=(A\uplus B, E)$
is {\em $(k,l)$-stress free} if there is no way to assign weights $w_{ab}$ to edges
so that not all weights are equal to zero and every vertex $u$
satisfies:
\begin{equation}  \label{e:iii-bip}
\sum_{v \ : \ uv\in E} w_{uv}\phi(v) =0.
\end{equation}
A $(k,l)$-embedding $\phi$ of a bipartite graph $G$ is {\em $(k,l)$-rigid}
if every assignment of velocity vectors
$V(a)\in (0)\times \R^l$ for $a\in A$ and
$V(b)\in\R^k\times (0)$ for $b\in B$ that satisfies
\begin{equation}  \label{e:iv-bip}
\langle V(a), \phi(b)\rangle + \langle V(b), \phi(a)\rangle =0
\end{equation}
for all $ab\in E$, must satisfy equation (\ref{e:iv-bip}) for all
$ab\in A\times B$.

It is worth mentioning (see Remark \ref{rem:Hyperconnectivity}
and Theorem \ref{thm:(l,1)-Laman})
that $(k,k)$-rigidity is equivalent to Kalai's
hyperconnectivity \cite{Kalai-hyperconnectivity} (restricted to
the bipartite case), while $(k,1)$-stress
freeness can be traced to the work of Whiteley \cite{Whiteley-scenes}.
In addition, $(k,k)$-rigidity is also equivalent to
the Singer and Cucuringu's notion of {\em rectangular local
completability in dimension $k$} as defined
in \cite[Section 4]{SingerCucuringu} in relation
to the problem of completing a low-rank matrix from a subset
of its entries.

In Section 4 we prove the following bipartite analog of Gluck's theorem.
\begin{theo}\label{thm:IntroBipGluck}
A generic $(2,2)$-embedding of a simple planar bipartite graph is
$(2,2)$-stress free. A generic $(2,2)$-embedding of a maximal simple
planar bipartite graph is also $(2,2)$-rigid.
\end{theo}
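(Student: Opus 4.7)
The proof follows the architecture of Gluck's argument, adapted to the bipartite setting and using the cone, deletion, contraction, and gluing lemmas established earlier in the paper. The first step is a dimension reduction. On the combinatorial side, the bipartite version of Euler's formula (using that planar bipartite graphs have girth at least four) gives $|E(G)| \le 2|V(G)|-4$ for every simple planar bipartite graph $G$. On the algebraic side, the rigidity matrix of a generic $(2,2)$-embedding of $K_{m,n}$ has a $4$-dimensional space of trivial infinitesimal motions, parametrized by linear maps $V(a)=S\phi(a)$, $V(b)=-S^{T}\phi(b)$ with $S\in\R^{2\times 2}$; these satisfy (\ref{e:iv-bip}) for all pairs $a,b$ by a direct calculation. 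Hence the generic rank of $K_{m,n}$ is $2(m+n)-4$, matching the Euler bound. Once generic $(2,2)$-stress-freeness is established for every simple planar bipartite graph, the $(2,2)$-rigidity assertion follows at once: a maximal planar bipartite graph has $|E|=2(m+n)-4$ edges, so stress-freeness forces its rigidity matroid to attain the same rank as $K_{m,n}$.

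The heart of the proof is therefore the stress-freeness statement, which I would establish by induction on $|V(G)|$. Small cases such as $K_{2,2}$ can be verified directly. For the inductive step, the average-degree bound coming from Euler guarantees a vertex $v$, say $v\in A$, with $\deg(v)\le 3$. If $\deg(v)\le 2$, then $G-v$ is a smaller planar bipartite graph and the inductive hypothesis together with the bipartite deletion/cone lemma lets us re-attach $v$: the restriction of the new rows $vb_1,vb_2$ of the rigidity matrix to the new columns $V(v)$ is a $2\times 2$ matrix with rows $\phi(b_1),\phi(b_2)$, which is generically invertible, so no new stress is introduced.

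The main obstacle is the degree-three case, which is genuinely unavoidable: every vertex of the $3$-cube quadrangulation has degree three. The classical Henneberg edge-split is not directly available here, since the three neighbors of $v$ lie in the same bipartition class $B$ and a planar bipartite graph cannot admit an edge between any two of them. Instead, I would invoke the contraction lemma to identify $v$ with a carefully chosen vertex $a\in A$ that shares at least two neighbors with $v$; such an $a$ exists because the local cyclic arrangement of faces around $v$ is forced by planarity and bipartiteness. After discarding the resulting multi-edges, one obtains a planar bipartite graph $G'$ with one fewer vertex and two fewer edges, to which the inductive hypothesis applies; the contraction lemma then transports $(2,2)$-stress-freeness from $G'$ back to $G$. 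The delicate combinatorial task is to verify that an admissible contraction partner always exists, possibly after first applying the gluing lemma to decompose $G$ along a $2$-vertex cut when $G$ fails to be $3$-connected; this is precisely where planarity enters the argument in an essential way.
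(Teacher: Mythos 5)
Your overall architecture (reduce rigidity to stress‑freeness plus an edge count, identify the $4$-dimensional space of trivial motions $V(a)=S\phi(a)$, $V(b)=-S^{T}\phi(b)$, and run an induction that deletes a low-degree vertex or contracts it away) is in the right spirit, and your handling of the degree-$\le 2$ case and the final rigidity deduction are correct. However, there is a genuine gap in the degree-$3$ case, and it is precisely the step you flag as ``delicate.'' Having fixed a degree-$3$ vertex $v\in A$ with neighbors $b_1,b_2,b_3$, you propose to contract $v$ with some $a\in A$ sharing at least two neighbors with $v$; but the Contraction Lemma in the stress-free direction requires \emph{at most} $l=2$ common neighbors, and there are planar bipartite graphs in which every candidate $a$ that shares two of $v$'s neighbors in fact shares all three. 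The smallest example is $K_{2,3}$ itself: the unique other $A$-vertex $a$ satisfies $|C(v,a)|=3$, so no contraction involving $v$ is admissible, even though $K_{2,3}$ is $(2,2)$-stress free and $(2,2)$-rigid. The same local configuration (a ``$K_{2,3}$-pocket'' around $v$) can occur inside larger maximal quadrangulations, so this is not merely a base-case anomaly. The detour through $2$-vertex cuts and the Gluing Lemma is sketched but not carried out, and it is not evident how it rescues this situation.

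The paper avoids this obstacle by a different decomposition. It first passes to a \emph{maximal} planar bipartite graph (legitimate because subcomplexes shift to subcomplexes, so stress-freeness is inherited by subgraphs), observes that such a graph is a quadrangulation of $\S^2$, and then proves a clean combinatorial lemma: in any quadrangulation on more than four vertices, \emph{every} $4$-gonal face has at least one of its two pairs of opposite vertices with exactly two common neighbors, and contracting that pair yields a smaller maximal quadrangulation. The crucial freedom is that the contracted pair may lie on \emph{either} side of the bipartition; in the $K_{2,3}$-pocket situation above one contracts $b_1$ with $b_2$ rather than $v$ with $a$. By contrast, your induction commits to removing a prescribed low-degree vertex, which forecloses exactly the move that works. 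To repair your argument you would need either to prove the existence of a valid contraction pair on \emph{some} side of \emph{some} face (which is essentially the paper's Lemma on contracting opposite vertices of a $2$-cell), or to make the $2$-cut decomposition precise and show the pieces remain planar bipartite with smaller vertex count.
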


Our theory of bipartite rigidity relies on the notion of
``balanced shifting" that we sketch below.

\subsection{Shifting, balanced shifting, and bipartite graphs}

Algebraic shifting is an operation introduced by
Kalai \cite{Kalai:fVectorsFamiliesConvexSetsI-84, Kalai:Diameter-91, Kalai-AlgShifting}
that replaces a simplicial complex $K$ with a ``shifted'' simplicial
complex $\Delta(K)$. There are two versions of algebraic shifting:
the symmetric one and the exterior one; we write
$\Delta(K)=K^s$ in the former case, and $\Delta(K)=K^e$ in the latter case.
For graphs, shifting is closely related to infinitesimal rigidity.
The shifting operation preserves various properties of the
complex, and, in particular, the numbers of faces of every dimension.
In dimension one, shifted graphs are known as threshold graphs:
the vertices numbered $\{1,2,\dots ,n\}$ are assigned nonnegative weights
$w_1>w_2>w_3> \cdots > w_n$, and edges correspond to pairs of
vertices with sum of weights above a certain threshold.\footnote{In higher
dimensions the class of shifted complexes
is much reacher than the class of threshold complexes.}

The following result (see \cite{Kalai-AlgShifting, Nevo-Stresses})
expresses Gluck's theorem (Proposition \ref{Prop1.3:Gluck})
in terms of symmetric shifting,
and clearly implies Euler's inequality of Proposition \ref{prop:EulerIneq}:

\begin{prop}\label{prop:K_5}
If $G$ is a planar graph then the symmetric algebraic shifting of $G$,
$G^s$, does not contain $K_5$ as a subgraph.
Equivalently, $G^s$ does not contain the edge $\{4,5\}$.\footnote {A
shifted graph contains $K_5$ as a subgraph if and only if
 it contains it as a minor.}
More generally, the same statement holds for every graph $G$
that does not contain $K_5$ as a minor.
\end{prop}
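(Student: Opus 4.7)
The plan is to derive the proposition from Gluck's theorem (Proposition \ref{Prop1.3:Gluck}) via the standard dictionary between symmetric algebraic shifting and generic infinitesimal rigidity of graphs. First I would dispatch the parenthetical equivalence: since $G^s$ is shifted, having $K_5$ as a subgraph is equivalent to containing the single edge $\{4,5\}$. If $\{4,5\}\in G^s$, shiftedness forces every $\{i,j\}$ with $1\leq i<j\leq 5$ into $G^s$, producing a $K_5$. Conversely, any $K_5$-subgraph on vertices $v_1<\cdots<v_5$ contains the edge $\{v_4,v_5\}$, which componentwise dominates $\{4,5\}$, so $\{4,5\}\in G^s$ by shiftedness.

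The heart of the proof is the Kalai--Nevo lemma identifying the generic $d$-rigidity matroid with the degree-$2$ initial structure of $G^s$: for every graph $G$ on $n$ vertices and every $d\geq 1$, the number of edges $\{i,j\}\in G^s$ with $i\geq d+1$ equals the dimension of the space of generic stresses of $G$ in $\R^d$. Specialized to $d=3$ and combined with shiftedness, $G$ is generically $3$-stress-free if and only if $\{4,5\}\notin G^s$. Gluck's theorem then yields the planar case immediately: a planar $G$ is generically $3$-stress-free in $\R^3$, hence $\{4,5\}\notin G^s$.

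To handle the more general statement, I would invoke Wagner's structure theorem for $K_5$-minor-free graphs: every such edge-maximal graph is assembled from planar triangulations and copies of Wagner's graph $V_8$ via $3$-clique-sums. Generic $3$-stress-freeness is preserved under $3$-clique-sums along rigid common triangles (a consequence of the gluing lemma of classical rigidity theory), and $V_8$ is itself generically $3$-stress-free by direct verification. Thus every $K_5$-minor-free graph inherits generic $3$-stress-freeness, and the same argument applies.

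The principal technical obstacle is the shifting/rigidity dictionary invoked in step two. Establishing it requires showing that the edges $\{i,j\}$ with $i\leq d$ form a basis of the generic $d$-rigidity matroid on $K_n$, and that under a generic linear change of coordinates the reverse-lex initial ideal of the Stanley--Reisner ideal of $G$ in degree $2$ faithfully records the dependencies of the rigidity matrix. This identification, treated in \cite{Kalai-AlgShifting, Nevo-Stresses}, lies at the technical heart of the theory of symmetric shifting; once in hand, the proposition reduces to Gluck's theorem together with standard rigidity-theoretic clique-sum manipulations.
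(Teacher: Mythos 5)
The paper does not actually prove Proposition~\ref{prop:K_5}; it is quoted from \cite{Kalai-AlgShifting, Nevo-Stresses}, so there is no internal proof to compare against. Your outline is the standard route that those references take, and it is correct in substance: the parenthetical equivalence via shiftedness, the dictionary $\{4,5\}\notin G^s\iff G$ generically $3$-stress free (stated in Section~\ref{sec:(k,l)-rigidity} of the paper), Gluck for the planar case, and then Wagner's structure theorem plus the rigidity gluing lemma for the $K_5$-minor-free case. Two points worth tightening. First, you leave ``$V_8$ is generically $3$-stress free'' to ``direct verification''; a clean way to discharge it is to note that $V_8$ is $3$-regular, so deleting any vertex $v$ leaves a graph on $7$ vertices and $9$ edges which one checks is planar (draw the $6$-cycle $2\text{-}3\text{-}4\text{-}8\text{-}7\text{-}6\text{-}2$, place $5$ inside joined to $4,6$, and route $3\text{-}7$ outside); then Gluck gives stress-freeness of $V_8-v$, and the classical deletion lemma (the non-bipartite analogue of Lemma~\ref{lem:bipDeletion}) lifts it to $V_8$ since $\deg v=3\leq d$. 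Second, when invoking the gluing lemma for $3$-clique sums you should record that the common triangle $K_3$ is generically rigid in $\R^3$ with exactly $3=3\cdot 3-\binom{4}{2}$ edges, so the row spans of the rigidity matrices of $G_1$ and $G_2$ intersect inside that of $K_3$; edge-maximal $K_5$-minor-free graphs on at least $4$ vertices really are built by pasting along \emph{full} triangles (no edges of the shared clique are removed), so Whiteley's gluing lemma applies verbatim, and the base cases with fewer than $4$ vertices are subgraphs of $K_4$, hence planar. With those details filled in, the argument is complete.
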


\noindent One drawback of this result is that $G^s$ may contain
$K_{3,3}$ and hence the planarity property itself is lost under
shifting.

Similarly, the following conjecture implies Conjecture \ref{c:ksw}
with the sharp constant $C=4$:

\begin{conj} \label{2-dimKS}
If $K$ is a 2-dimensional simplicial complex embeddable in $\R^4$ then
$K^s$ does not contain the $2$-face $\{5,6,7\}$.
\end{conj}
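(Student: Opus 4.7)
\medskip
\noindent\textbf{Proof proposal.}
The plan is to reduce Conjecture~\ref{2-dimKS} to a higher-dimensional stress-freeness statement via symmetric shifting, in direct parallel with the way Proposition~\ref{prop:K_5} encodes Gluck's theorem (Proposition~\ref{Prop1.3:Gluck}). For graphs, the condition $\{4,5\}\notin G^s$ is equivalent to the generic stress-freeness of $G$ in $\R^3$, and the shifting statement encodes the geometric one. The first step is to identify the analogous rigidity-theoretic translation of the condition $\{5,6,7\}\notin K^s$ for a $2$-dimensional complex $K$. The expected translation is the generic stress-freeness of $K$ as a $2$-complex in $\R^4$: given a generic map $\phi:V(K)\to \R^4$, one forms a matrix whose rows are indexed by the $2$-faces $F$ of $K$ and whose $F$-th row records the Pl\"ucker coordinates of the $2$-plane spanned by $\phi(F)$, and asks that this matrix have full row rank. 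This translation should follow from the general framework relating symmetric shifting to stress spaces, once one writes down the correct matrix.

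Once this translation is in place, I would attempt to establish the required stress-freeness by inductive combinatorial arguments modeled on the proof of Theorem~\ref{thm:IntroBipGluck}. The main ingredients to develop are a cone lemma (showing that coning over a stress-free graph in $\R^3$ produces a stress-free $2$-complex in $\R^4$), deletion and contraction lemmas for $2$-stresses, and a gluing lemma allowing $K$ to be assembled from stress-free pieces along common subcomplexes. A natural base class is provided by boundary complexes of simplicial $4$-polytopes, whose stress-freeness in $\R^4$ should follow from rigidity of convex polytopes in the spirit of Dehn--Alexandrov; a byproduct would be the sharp inequality $f_2(K)\le 4f_1(K)$, attained by boundaries of stacked $4$-polytopes, and hence Conjecture~\ref{c:ksw} with the conjectural constant $C=4$.

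The chief obstacle, and the reason the statement is presently only a conjecture, is that embeddability of a $2$-complex in $\R^4$ is not encoded by a finite list of forbidden minors but rather by the vanishing of the Van Kampen obstruction. One therefore cannot simply mimic the reduction to $K_5$- and $K_{3,3}$-free graphs that drives Wagner's theorem in dimension one. A plausible route is to show that any nonzero generic $2$-stress on $K$ forces the existence of a pair of vertex-disjoint $2$-faces whose images in $\R^4$ have nonzero linking number, thereby contradicting the embedding hypothesis; pairing the Van Kampen cocycle against $2$-stresses in this way, and doing so compatibly with the inductive constructions above, is the principal technical hurdle. Since no $4$-dimensional Cauchy-type rigidity theorem is available to anchor the argument, any eventual proof will likely require genuinely topological input in addition to the combinatorial and algebraic rigidity framework developed in this paper.
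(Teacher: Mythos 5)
This statement is a \emph{conjecture}, not a theorem: the paper offers no proof of it, and only observes that it would imply Conjecture~\ref{c:ksw} with the sharp constant $C=4$. You correctly recognize this and, commendably, do not pretend to close the gap; your write-up is an honest roadmap rather than a proof, and your identification of the principal obstacle --- that embeddability of a $2$-complex in $\R^4$ is not governed by a finite forbidden-minor list but by the Van~Kampen obstruction, so no Wagner-type reduction is available and genuinely topological input is needed --- matches the paper's own framing (compare the discussion around Conjectures~\ref{conj:Kalai-Sarkaria}, \ref{conj:balancedKalaiSarkaria}, and \ref{conj:vanKampenBalancedKalaiSarkaria}, where the authors relate the balanced analog to the Van~Kampen obstruction precisely because a minor-free characterization is unavailable).

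Two small cautions on the details of your sketch. First, the rigidity matrix you describe --- rows indexed by $2$-faces $F$, with the $F$-th row given by the Pl\"ucker coordinates of the plane spanned by $\phi(F)$ --- does not have the right shape: as written, each row depends only on the three vertices of $F$ and the columns are a fixed low-dimensional Pl\"ucker space, so the matrix cannot have full row rank once $f_2(K)$ exceeds that fixed dimension. The correct algebraic translation of $\{5,6,7\}\notin K^s$ is through the Stanley--Reisner ring: one checks surjectivity of a multiplication map analogous to the $\Phi$ of Lemma~\ref{mapPhi}, yielding a matrix with rows indexed by $2$-faces and columns indexed by vertex/generic-linear-form pairs, as in the graph case. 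Second, the base class you propose (boundaries of simplicial $4$-polytopes) already requires a nontrivial rigidity input; the relevant fact --- that such boundaries are generically stress-free in the appropriate sense, following from Kalai's rigidity approach to the lower bound theorem \cite{Kalai-LBT} --- is known, but one still needs a mechanism (cone, deletion, contraction, gluing, all for $2$-complexes) to propagate from that base class to arbitrary embedded $2$-complexes, and this is exactly the missing step. Your overall assessment that the conjecture remains open for want of a higher-dimensional Cauchy--Dehn--Alexandrov anchor is accurate.
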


We now move from graphs to {\em bipartite} graphs and, more
generally, in higher dimensions from simplicial complexes to
{\em balanced} simplicial complexes. A $d$-dimensional simplicial complex
is called \emph{balanced} if its vertices are colored with $d+1$
colors in such a way that every edge is bicolored; thus
the $d+1$ colors
for the vertices of every $d$-simplex are all different.
A balanced $1$-dimensional complex is simply a bipartite graph.
The study of enumerative and algebraic properties of
balanced complexes was initiated by Stanley \cite{Stanley-balancedCM}.

Babson and Novik \cite {Babson-Novik} defined a notion of balanced shifting,
and associated with every balanced simplicial complex $K$ a
balanced-shifted complex $K^b$. We recall this operation in Section 2
(see also Section 7) mainly concentrating on the case of graphs.
In Section 3, we show that in this case the properties of the
balanced-shifted bipartite graphs are described in terms
of ``bipartite rigidity'' as defined in Section 1.2. Specifically, we establish
bipartite analogs of the cone, contraction, deletion, and gluing lemmas;
then in Section 4 we use these results to prove Theorem \ref{thm:IntroBipGluck}
expressed in terms of balanced shifting as follows:

\begin{theo} \label{thm:planar-1}
For a bipartite planar graph $G$, $G^b$ does not contain $K_{3,3}$.
\end{theo}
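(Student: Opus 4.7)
The plan is to deduce Theorem \ref{thm:planar-1} from the first assertion of Theorem \ref{thm:IntroBipGluck} via the dictionary between balanced algebraic shifting and bipartite rigidity developed in Section 3. In direct analogy with the classical graph case, this dictionary should say: for a bipartite graph $G=(A\uplus B,E)$ with $A=\{a_1<a_2<\cdots\}$ and $B=\{b_1<b_2<\cdots\}$ the ordered color classes, and integers $k,l\ge 1$, the balanced shift $G^b$ contains the edge $\{a_{k+1},b_{l+1}\}$ if and only if $G$ fails to be generically $(k,l)$-stress-free. Specializing to $(k,l)=(2,2)$ and invoking Theorem \ref{thm:IntroBipGluck}, the edge $\{a_3,b_3\}$ is absent from $G^b$ whenever $G$ is a planar bipartite graph.

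The second step is to promote the absence of this single edge to the absence of $K_{3,3}$ as a subgraph. For this one uses the shiftedness of $G^b$: for any shifted bipartite graph $H$, if $\{a_i,b_j\}\in H$ and $i'\le i$, $j'\le j$, then $\{a_{i'},b_{j'}\}\in H$. Consequently, containment of any copy of $K_{k+1,l+1}$ in $H$ forces in particular the ``northeast'' edge $\{a_{k+1},b_{l+1}\}$ to lie in $H$, and conversely $\{a_{k+1},b_{l+1}\}\in H$ forces the entire subgraph $K_{k+1,l+1}$ on the first $k+1$ vertices of $A$ and the first $l+1$ vertices of $B$. So the impossibility of $\{a_3,b_3\}\in G^b$ rules out every $K_{3,3}$ subgraph of $G^b$.

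The hard part, and the reason Section 3 has to be developed before Section 4, is establishing the rigidity-shifting dictionary itself. One should interpret generic $(k,l)$-stress-freeness of $G$ as a full-rank condition on the bipartite rigidity matrix after the balanced generic change of basis used in Babson--Novik's construction, and then read off the ``missing'' edge $\{a_{k+1},b_{l+1}\}$ of $G^b$ from the cokernel structure. The bipartite cone, contraction, deletion, and gluing lemmas announced in the abstract are exactly the inductive tools needed both to verify this dictionary and, in Section 4, to prove Theorem \ref{thm:IntroBipGluck} by reducing a general planar bipartite graph to atomic pieces (cycles $C_{2m}$ and simple identifications). Once the dictionary and Theorem \ref{thm:IntroBipGluck} are in hand, Theorem \ref{thm:planar-1} follows at once from the two-line argument above.
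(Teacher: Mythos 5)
Your proposal correctly identifies that Theorem \ref{thm:planar-1} and the first assertion of Theorem \ref{thm:IntroBipGluck} are two formulations of one and the same statement: by Definition \ref{comb-rig} and Remark \ref{geom-int}, ``generically $(2,2)$-stress free'' is \emph{defined} by (and shown equivalent to) the absence of the edge $\{3,3'\}$ in $G^{b,<}$ for a $(2,2)$-admissible order, and the shiftedness of $G^b$ then promotes the absence of that one edge to the absence of any $K_{3,3}$, exactly as you argue. That part of the reasoning is sound and matches the paper's own framing (the paper notes at the end of Section \ref{sec:PrelimBipGraphsBalShift} precisely the shiftedness equivalence, and states Theorem \ref{theo:BipartiteGluck} with the two formulations declared equivalent).

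However, this means your deduction is essentially circular as a proof of Theorem \ref{thm:planar-1}: Theorem \ref{thm:IntroBipGluck} is not a separately proved ingredient that one can invoke; it is Theorem \ref{thm:planar-1} restated, and both are proved \emph{together} as Theorem \ref{theo:BipartiteGluck} in Section \ref{sec:BipPlanarGraphs}. The actual mathematical content --- which your sketch gestures at only vaguely --- is the following induction. One first reduces to \emph{maximal} planar bipartite graphs using the fact that $H\subseteq G$ implies $H^b\subseteq G^b$. By Lemmas \ref{lem:MaxPlanarBipartite} and \ref{lem:2n-4} such a maximal graph is a quadrangulation of $\S^2$ with $2N-4$ edges; by Lemma \ref{lem:bipartiteContraction} every $4$-gonal $2$-cell has a pair of opposite vertices whose only common neighbors are the other two boundary vertices, and contracting that pair yields a smaller maximal planar bipartite quadrangulation. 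The Contraction Lemma (Lemma \ref{lem:bipVertexSpliting}, part 1 with $l=|C|=2$) then transfers $(2,2)$-stress-freeness up the contraction, with base case $G=C_4$ where $G^b=C_4$ manifestly omits $\{3,3'\}$. Your description of reducing to ``cycles $C_{2m}$ and simple identifications'' is not the mechanism used; the reduction is to quadrangulations contracted down to $C_4$. Without this argument (or some substitute), the proposal does not yet contain a proof of the statement.
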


A balanced-shifted bipartite graph without $K_{3,3}$ is planar, and therefore
Theorem \ref{thm:planar-1} implies that, in contrast with the case of symmetric
shifting, the planarity property {\em is preserved} under balanced shifting.
In other words, Theorem \ref{thm:planar-1} settles the $d=1$ case of the
following conjecture.

\begin{conj} \label{conj:embed-1}
Balanced shifting for $d$-dimensional balanced complexes
preserves embeddability in $\R^{2d}$.
\end{conj}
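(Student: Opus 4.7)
The plan is to emulate, in higher dimensions, the strategy behind the $d=1$ case as distilled in Theorem \ref{thm:planar-1}, which proceeded via bipartite rigidity. The first step would be to extend the $(k,l)$-rigidity framework of Section 1.2 to a \emph{multipartite} $(k_0,\dots,k_d)$-rigidity for balanced $d$-complexes: for a $(d+1)$-colored complex $K$ with color classes $V_0,\dots,V_d$, place each $V_i$ in a coordinate subspace $\R^{k_i}\subset \R^{k_0+\cdots+k_d}$, and define stresses and velocities on $d$-faces using the natural $(d{+}1)$-linear pairing on the product. The case of interest is $k_0=\cdots=k_d=2$, where the ambient space has dimension $2(d{+}1)$ but color classes each live in a plane; the target statement would be that a generic $(2,\dots,2)$-embedding of any balanced $d$-complex that admits a (topological) embedding in $\R^{2d}$ is stress-free.

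Next, one would establish multipartite analogs of the cone, contraction, deletion, and gluing lemmas from Section 3, and use them, as in Section 4, to reduce the generic stress-freeness of embedded complexes to an inductive argument — plausibly along a handle decomposition or a stellar subdivision of the ambient $2d$-manifold into which $K$ embeds. Babson--Novik's balanced shifting converts the rigidity statement into a forbidden-subcomplex statement about $K^b$. The natural candidate obstruction, playing the role of $K_{3,3}$, is the $(d{+}1)$-fold join $[3]^{*(d+1)}$, i.e., the boundary complex of the $(d{+}1)$-dimensional cross-polytope, which is balanced, has the right size to fit the rigidity count, and is known not to embed in $\R^{2d}$ by van Kampen--Flores. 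One hopes that the multipartite rigidity/shifting dictionary forces the absence of $[3]^{*(d+1)}$ in $K^b$ and that this absence in turn certifies embeddability of $K^b$ in $\R^{2d}$.

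The main obstacle, and the reason this is a genuine conjecture rather than a routine extension, is that embeddability of $d$-complexes in $\R^{2d}$ does not admit a finite forbidden-minor description analogous to Kuratowski--Wagner. Consequently one cannot, as in the planar case, reduce the problem to excluding a short list of balanced subcomplexes from $K^b$. A proof seems to require either (i) showing that a van Kampen--type cohomological obstruction to embedding $K$ in $\R^{2d}$ is preserved, after suitable algebraicization, by balanced shifting; or (ii) showing directly that any stress on $K^b$ arising from a would-be non-embeddable configuration can be pulled back through the shifting operators to a non-trivial stress on $K$ obstructing its $(2,\dots,2)$-embedding. Either route demands that the multipartite rigidity theory be developed to the point where the topological obstruction admits a purely algebraic avatar compatible with the generic initial ideal used in balanced shifting. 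This compatibility — essentially a higher-dimensional analog of the equivalence between stress-freeness and the shifting-theoretic rank computations exploited in Section 4 — is, in my view, the crux of the difficulty, and it is why even the $d=2$ case (relevant to Conjecture \ref{c:ksw}) appears to require substantial new ideas beyond those deployed here for $d=1$.
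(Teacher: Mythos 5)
The statement you were asked to address is a \emph{conjecture}; the paper itself offers no proof, and you correctly recognize this. Your sketch broadly parallels the paper's own discussion in Section~8: the candidate obstruction $[3]^{*(d+1)}$ (Conjecture~\ref{conj:balancedKalaiSarkaria}), the balanced rigidity matrix $M(K,l)$ and its connection to the shifted complex (Lemma~\ref{does_not_contain}), the higher-dimensional Deletion Lemma~\ref{lem:deletion--HighDim}, and the van Kampen obstruction route (Conjecture~\ref{conj:vanKampenBalancedKalaiSarkaria}). In that sense your proposal is faithful to the intended program.

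One substantive correction: you locate ``the main obstacle'' in the absence of a finite forbidden-subcomplex characterization of embeddability in $\R^{2d}$, concluding that ``one cannot \ldots reduce the problem to excluding a short list of balanced subcomplexes from $K^b$.'' This misstates where the difficulty lies. The paper's Proposition~\ref{prop:EmbedNo[3]*d} shows that the reduction \emph{does} work once one is looking at a balanced-shifted complex: any $d$-dimensional balanced-shifted complex not containing $[3]^{*(d+1)}$ is PL embeddable in $\S^{2d}$. (The balanced-shifted structure is exactly what rescues this direction, in parallel with the observation about $K_5$-free shifted graphs.) This is precisely how the paper shows Conjecture~\ref{conj:embed-1} is equivalent, in its more detailed form Conjecture~\ref{conj:eqBalancedKalaiSarkaria}, to Conjecture~\ref{conj:balancedKalaiSarkaria}. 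So the lack of a Kuratowski-type theorem is not the obstruction. The real gap is in the \emph{forward} direction: proving the rank or linear-independence statement for $M(K,2)$ whenever $K$ topologically embeds in $\S^{2d}$. For $d=1$ there was a combinatorial engine --- maximal bipartite planar quadrangulations decompose by a vertex contraction feeding the Contraction Lemma --- and no analogous decomposition of embedded $d$-complexes in $\S^{2d}$ is known for $d\geq 2$. Your route (ii), pulling a stress on $K^b$ back through the shifting operators, is closest in spirit to what actually succeeds in $d=1$; the paper's partial results (Lemma~\ref{lem:join} for joins, Propositions~\ref{prop:sdLink} and~\ref{prop:sdPseudomanifold} for certain subdivisions) indicate how far one can push this without a full decomposition theorem. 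Finally, a minor point: the paper sets up only a uniform balanced rigidity matrix $M(K,l)$, not the tuple-indexed $(k_0,\dots,k_d)$ version you propose; the latter is a natural extra generality but is not needed for the $l=2$ case relevant here.
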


\noindent
In Section 8 we discuss several variations as well as
a more detailed version of this conjecture. It is also
worth remarking that the $d=2$ case of Conjecture \ref{conj:embed-1}
implies Conjecture \ref{c:ksw} with $C=9$, see Section 8
for more details.

We also discuss (see Section 6) a rigidity approach and
some partial results regarding the following conjecture of
Jockusch \cite{Jock}:

\begin{conj}[Jockusch] \label{conj:Adin}
If $K$ is a cubical polytope of dimension $d\geq 3$,
with $V$ vertices and $E$ edges then $E\geq \frac{d+1}{2}V-2^{d-1}$.
\end{conj}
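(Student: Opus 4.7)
The strategy is to attack Conjecture \ref{conj:Adin} through bipartite rigidity, exploiting the fact that the $1$-skeleton $G(P)$ of any cubical $d$-polytope $P$ is bipartite with a canonical $2$-coloring. Write $G(P)=(A\uplus B,F)$ with $|A|=a$ and $|B|=b$, so $V=a+b$; the Jockusch bound is equivalent to $|F|\ge \tfrac{d+1}{2}(a+b)-2^{d-1}$. The plan is to derive this by exhibiting $G(P)$ as rigid (and hence as edge-rich) in a bipartite rigidity framework tailored to cubical polytopes, with the $d$-cube playing the role of the tight example.

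The base case $d=3$ is in fact known directly from Euler's formula: every $2$-face of a cubical $3$-polytope is a quadrilateral, so $V-E+f_2=2$ together with $4f_2=2E$ yields $E=2V-4$. The same conclusion can be recovered via the theory developed here: the $1$-skeleton of a cubical $3$-polytope is a simple planar bipartite graph whose bounded $2$-faces are all quadrilaterals, hence it is maximal among simple planar bipartite graphs. Theorem \ref{thm:IntroBipGluck} then furnishes its generic $(2,2)$-rigidity, and the associated edge lower bound gives $E\ge 2V-4$, matching Jockusch with $k=l=2$ and $kl=4=2^{d-1}$.

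For $d\ge 4$, I would attempt induction on $d$ using the bipartite cone, contraction, deletion, and gluing lemmas developed in Section 3. A cubical $d$-polytope decomposes as a union of cubical $(d-1)$-facets glued along $(d-2)$-cubical subcomplexes; the inductive hypothesis provides a bipartite rigidity statement for each facet, and one would like the gluing lemma to propagate that rigidity to $G(P)$, with the cone lemma handling the bump in dimension. Concretely, one could aim for $(k,l)$-rigidity with $k+l=d+1$, possibly after augmenting $G(P)$ by an explicit, controlled set of auxiliary edges running along short diagonals of cubical cells, so as to absorb the gap between the Jockusch constant $2^{d-1}$ and the standard bipartite trivial-motion count $kl$.

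The main obstacle is precisely this mismatch: standard bipartite $(k,l)$-rigidity contributes only $kl\le (d+1)^2/4$ trivial motions, whereas Jockusch's bound demands a trivial-motion space of dimension $2^{d-1}$, exponential in $d$. (For instance, in dimension $d=4$ the $4$-cube has $(a,b)=(8,8)$ and $E=32$, while $(2,3)$-rigidity would force $E\ge 34$, so $G(P)$ is not $(k,l)$-rigid for $k+l=d+1$ with $kl>2^{d-1}$.) Closing this gap requires identifying a richer bipartite, or hyperconnectivity-type (in the sense of \cite{Kalai-hyperconnectivity}), rigidity framework in which the trivial motions of a cubical $d$-polytope naturally form a space of dimension $2^{d-1}$, reflecting the affine/combinatorial structure of the $d$-cube itself. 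Pinpointing such a framework and then verifying it for cubical $d$-polytopes by induction via the lemmas of Section 3 is, in my view, the central open step, and is where the argument is most likely to stall.
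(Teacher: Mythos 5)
The statement is a conjecture, and neither the paper nor your proposal proves it; the question is therefore one of \emph{approach}. Your approach matches the paper's framework closely: use bipartite $(k,l)$-rigidity with $k+l=d+1$, invoke Blind--Blind for bipartiteness of $G(P)$ when $d>2$, and confront the fact that the trivial-motion count $kl$ falls exponentially short of the Jockusch constant $2^{d-1}$. Your $d=3$ argument (maximal planar bipartite, hence $(2,2)$-rigid, hence $E\geq 2V-4$) is correct, as is the $4$-cube example showing $G(C^4)$ is not $(2,3)$-rigid — though the parenthetical ``with $kl>2^{d-1}$'' should read $kl<2^{d-1}$; the shortfall you are observing is $2^{d-1}-kl$ missing edges.

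Where you part ways with the paper is exactly at the point where you declare the argument ``most likely to stall.'' You mention, almost in passing, augmenting $G(P)$ by a controlled set of auxiliary edges to absorb the gap; the paper commits to this idea and carries it much further. Section 6 shows (Lemmas~\ref{lem:cube-edges in opposite facets} and~\ref{lem:cube-edges in one facet}, Proposition~\ref{prop:stacked cubes rigidity}) that for a \emph{stacked} cubical $d$-polytope $P$, one can add exactly $2^{d-1}-2(d-1)$ edges inside a single facet to obtain a graph that is $(2,d-1)$-rigid and stress free, and similarly (Proposition~\ref{prop:stacked cubes (1,d)-rigidity}) that $2^{d-1}-d$ added edges yield a $(1,d)$-Laman graph. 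The induction there does use the contraction, cone, and gluing lemmas you cite, but the induction is run over pairs of opposite facets and corner vertices \emph{within a single $d$-cube}, and then across the stacking tree, rather than over the facet decomposition of the polytope as you suggest. Jockusch's conjecture is then reduced to Problems~\ref{prob:Adin(2,d-1)} and~\ref{prob:Adin((d+1)/2,(d+1)/2)}: whether the graph of an \emph{arbitrary} cubical $d$-polytope admits $2^{d-1}-kl$ added edges making it $(k,l)$-rigid for suitable $k+l=d+1$. A positive answer implies the conjecture; this the paper leaves open, as do you. Your alternative proposal — a richer hyperconnectivity-type framework in which cubical $d$-polytopes have a $2^{d-1}$-dimensional space of trivial motions intrinsically — is genuinely different and is not pursued in the paper; it would elegantly avoid the ad hoc edge augmentation, but no candidate for such a theory is currently known.
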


The structure of the rest of the paper is as follows. In Section 2
we discuss basics of graphs as well as recall how to compute the
balanced shifting, $G^b$, of a bipartite graph $G$. In Section 3, we
define the notions of bipartite rigidity and stress freeness, and establish
bipartite analogs of the cone, deletion, contraction, and gluing
lemmas. In Section 4, we use these lemmas to prove
Theorem \ref{thm:planar-1}; we also discuss there balanced shifting
of linklessly embeddable graphs.
In Section 5 we consider bipartite analogs of Laman's theorem; this includes
analyzing balanced shifting of bipartite trees and outerplanar graphs.
Section 6 is devoted to graphs of cubical polytopes (and, more specifically,
to Jockusch's conjecture) as well as to graphs of polytopes that are
dual to balanced simplicial polytopes. In the remaining sections we
turn to higher-dimensional simplicial complexes: in Section 7 we recall basics
of simplicial complexes, and in Section 8 we discusses several problems and
partial results related to Conjecture \ref{conj:embed-1}.

Our rigidity theory of bipartite graphs has also led
to purely graph-theoretic questions regarding bipartite
graphs that we study separately in a joint work
with Chudnovsky and Seymour \cite{Chud.Kal.Nev.Nov.Sey.-BipMinors}.
Understanding the higher-dimensional analogs of graph minors for
general complexes has been quite fruitful in establishing partial
results on Conjecture \ref{c:ksw},
see \cite{Nevo:HigherMinors, Wagner11:HomologicalMinor}.
Finding such a notion for balanced complexes
may be equally useful, and bipartite graphs are a good place to start.
In \cite{Chud.Kal.Nev.Nov.Sey.-BipMinors} we initiate this program by defining
a notion of bipartite minors and proving a bipartite analog of Wagner's
theorem: a bipartite graph is planar
if and only if it does not have $K_{3,3}$ as a bipartite minor.


\section{Preliminaries on bipartite graphs and
balanced shifting}\label{sec:PrelimBipGraphsBalShift}

All the graphs considered in this paper are simple graphs. A graph
with the vertex set $V$ and the edge set $E$ is denoted by $G=(V,E)$. A graph
is {\em bipartite} if there exists a bipartition of the vertex set $V$
of $G$, $V=A\uplus B$, in such a way that no two vertices from the
same part form an edge. When discussing bipartite graphs,
we fix such a bipartition and write $G=(A\uplus B,E)$; we refer to $A$
and $B$ as {\em parts} or {\em sides} of $G$.

If $G=(V,E)$ is a graph and $v$ is a vertex of $G$, then $G-v$ denotes
the induced subgraph of $G$ on the vertex set $V-\{v\}$.
If $G=(V=A\uplus B,E)$ is a bipartite graph and $u,v$ are two vertices
from the \emph{same} part, then the {\em contraction} of $u$ with $v$ is the graph
$G'$ on the vertex set $V-\{u\}$ obtained from $G$ by identifying $u$ with
$v$ and deleting the extra copy from each double edge that was created.
Observe that $G'$ is also bipartite.

We usually identify $A$ and $B$ with the ordered sets
$\{1<2<\ldots<n\}:=[n]$ and $\{1'<2'<...<m'\}:=[m']$, respectively.
For brevity, we denote the edge connecting vertices $i$ and $j'$ by $ij'$
(instead of $\{i,j'\}$). Define $\E=\E_{n,m}:=\{ij' : i\in[n], j'\in[m']\}$
to be the edge set of the complete bipartite graph $K_{A,B}=K_{n,m}$ on $V=A\uplus
B$. We also consider a total order, $<$, on $V$ that extends the given orders
on $A$ and $B$, and the induced lexicographic order, $<_{\lex}$, on $\E$.

Given a bipartite graph $G$ and such an order $<$ on $V$,
one can compute the {\em balanced shifting} of $G$, $G^b=G^{b,<}$.
This notion was introduced in \cite{Babson-Novik} for a much more general
class of simplicial complexes (and was called ``colored shifting'' there).
For the sake of completeness and to establish notation, we briefly recall
here the relevant definitions.

Let $G=(A\uplus B, E)$ be a bipartite graph, and
let $\R$ be the field of real numbers
(although all the theory we develop  here works over any infinite
field). Consider two sets of variables: $\{x_1,\ldots,x_n\}$
(one $x$ for each element of $A$) and $\{y_1,\ldots,y_m\}$
(one $y$ for each element of $B$). Let $S$ be a polynomial ring over $\R$
in the $x$'s and $y$'s, let $I_G$ be the Stanley-Reisner ideal of $G$:
\[
I_G=\langle\{x_ix_j : 1\leq i<j\leq n\}\cup \{y_iy_j:  1\leq i<j\leq m\}
\cup\{x_iy_j:ij'\notin E\}\rangle,
\]
and let $\R[G]:=S/I_G$ be the Stanley-Reisner ring of $G$. Setting
$\deg x_i:=(1,0)$ for all $i\in[n]$ and $\deg y_j:=(0,1)$ for all $j\in[m]$
makes $R[G]$ into a $\Z^2$-graded ring. For $(p,q)\in\Z^2$ we denote
by $\R[G]_{(p,q)}$ the $(p,q)$-th homogeneous component of $R[G]$.

Let $\Theta\in \GL_n(\R)\times \GL_m(\R)$ be a matrix
whose entries $\theta_{ij}, \theta_{s't'} \in \R$ for
$i, j\in [n]$ and $s',t'\in[m']$ are ``generic'' (for
instance, algebraically independent over $\Q$ is more than enough).
Set $\theta_i:=\sum_{j=1}^n \theta_{ij}x_j$ (for $1\leq i \leq n$) and
$\theta_{s'}:=\sum_{t=1}^m \theta_{s't'}y_t$ (for $1'\leq s'\leq m'$).
The matrix $\Theta$ acts on the set of linear forms of $\R[G]$ by
$\Theta x_i:=\theta_i$ and $\Theta y_s:=\theta_{s'}$, and this action
can be extended uniquely to a ($\Z^2$-grading preserving) ring
automorphism of $\R[G]$ that we also denote by $\Theta$.

Given a total order $<$ on $A\uplus B$, define $G^b=G^{b,<}$ --- the
{\em balanced shifting} of $G$ --- as the bipartite graph whose vertex set
is $A\uplus B$ and whose edge set, $E^b$, is given by
\[
\left\{ij'\in\E \ : \  \theta_i\theta_{j'} \notin
\Span\{ \theta_p\theta_{q'} : pq'<_{\lex} ij' \} \subset \R[G]_{(1,1)}
\right\}.
\]
In other words, the edge set of $G^b$ is determined by the ``greedy''
lexicographic basis of the vector space $\R[G]_{(1,1)}$ chosen from the
monomials written in $\theta$'s. For instance, $(K_{n,m})^{b,<}=K_{n,m}$
for any order $<$.

The following two properties of the balanced shifting from
\cite{Babson-Novik}
will be handy:
\begin{lem} For a bipartite graph $G=(A\uplus B, E)$ and any order $<$ on
$A\uplus B$ that extends the natural orders on $A$ and $B$, we have
\begin{itemize}
\item $|E^b|=|E|$, and
\item $G^b$ is \emph{balanced-shifted}: if $ij'\in E^b$, $1\leq p\leq i$, and
$1'\leq q'\leq j'$, then $pq'\in E^b$.
\end{itemize}
\end{lem}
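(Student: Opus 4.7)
For the first bullet, my plan is to note that $\R[G]_{(1,1)}$ is an $\R$-vector space of dimension $|E|$, with basis $\{x_iy_{j'}: ij'\in E\}$, and that $\{\theta_i\theta_{j'}: (i,j')\in\E\}$ spans it. The spanning holds because multiplication $\R[G]_{(1,0)}\otimes\R[G]_{(0,1)}\to\R[G]_{(1,1)}$ is surjective, and by genericity of $\Theta$ the sets $\{\theta_i\}_{i\in[n]}$ and $\{\theta_{j'}\}_{j'\in[m']}$ are bases of $\R[G]_{(1,0)}$ and $\R[G]_{(0,1)}$, so their products span the target. A greedy lex extraction then produces exactly $\dim\R[G]_{(1,1)}=|E|$ elements, proving $|E^b|=|E|$.

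For the shifted property, my plan is a ``swap and genericity'' argument. First, I would observe that $E^b$ is independent of the particular generic $\Theta$: for each candidate subset $F\subseteq\E$, the condition $F=E^b$ corresponds to a conjunction of vanishing and non-vanishing statements on determinants in the entries of $\Theta$, so is Zariski-locally-closed, and exactly one such $F$ defines a Zariski-open dense stratum. Given $pq'\notin E^b$ and $p\leq i$ in the natural $A$-order, let $\Theta'$ be obtained from $\Theta$ by swapping rows $p$ and $i$ of $\Theta_n$. A row permutation preserves algebraic independence, so $\Theta'$ is still generic and $E^b_{\Theta'}=E^b$. Translating the relation $pq'\notin E^b_{\Theta'}$ back to the original $\theta$'s via $\theta_p^{\Theta'}=\theta_i$, $\theta_i^{\Theta'}=\theta_p$, and $\theta_a^{\Theta'}=\theta_a$ otherwise, expresses $\theta_i\theta_{q'}$ as a linear combination of: $\theta_a\theta_{b'}$ for $a\notin\{p,i\}$ with $ab'<_\lex pq'$; $\theta_i\theta_{b'}$ with $pb'<_\lex pq'$ (from the $a=p$ terms); and $\theta_p\theta_{b'}$ with $ib'<_\lex pq'$ (from the $a=i$ terms). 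Verifying that the index of each such term is $<_\lex iq'$ gives $iq'\notin E^b$. A symmetric swap of two columns of $\Theta_m$ upgrades this to $ij'\notin E^b$ for any $j'\geq q'$, which is the shifted property in contrapositive form.

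The main obstacle is this final index-pair verification for a general total order $<$ on $V=A\uplus B$ that may interleave $A$ and $B$. The key auxiliary observation is that an edge $\{u,v\}$ is encoded in $<_\lex$ by the increasing pair $(\min_<\{u,v\},\max_<\{u,v\})$, and combined with $p\leq i$ in the $A$-order this yields each inequality after a brief case analysis on the positions of $b'$, $q'$, $p$, $i$ in $<$. A conceptually cleaner but less self-contained alternative is to identify $E^b$ with the complement of $\{(i,j')\in\E: x_iy_{j'}\in\init(\Theta(I_G))\}$ for a lex term order on $S$ refining $<$, and invoke Galligo's theorem for the $\GL_n(\R)\times\GL_m(\R)$-action: the generic initial ideal is Borel-fixed, and Borel-fixedness in bi-degree $(1,1)$ translates directly to the shifted property.
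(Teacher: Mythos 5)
Your proof is correct. Note that the paper itself does not prove this lemma --- it quotes it from \cite{Babson-Novik} without proof --- so your argument is a self-contained reconstruction rather than a reproduction of anything in the paper. The dimension count in the first bullet is exactly right. For the shifted property, your swap-and-genericity trick is the direct bipartite analog of the classical transposition argument used to prove Borel-fixedness of generic initial ideals, and the ``brief case analysis'' you defer does go through for an arbitrary interleaving of $A$ and $B$ in $<$: from $p<i$ one gets $pq' <_{\lex} iq'$ (covering the $a\notin\{p,i\}$ terms); $pb' <_{\lex} pq'$ holds iff $b' < q'$ iff $ib' <_{\lex} iq'$ (covering the $a=p$ terms); and $pb' <_{\lex} ib'$, so $pb' <_{\lex} ib' <_{\lex} pq' <_{\lex} iq'$ (covering the $a=i$ terms). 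A second transposition on the $B$-block then handles the $q'\le j'$ direction in the same way. The Galligo alternative you sketch is sound in spirit, but it hides a convention issue --- identifying the greedy-lex basis of $\R[G]_{(1,1)}$ with the complement of a generic initial ideal in bidegree $(1,1)$ requires pinning down the right term order and the right Borel subgroup (upper versus lower triangular in each block), and getting the orientation wrong reverses the direction of the shifted property --- so the direct transposition argument you give as the main route is the cleaner one.
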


We finish this section with the following definition and observation
that will be useful in the rest of the paper.

\begin{defi}
Given a pair of two fixed integers $k\leq n$ and $l\leq m$, we say
that a total order on $V=A\uplus B$ is \emph{$(k,l)$-admissible}
if (i) it extends the natural orders on $A$ and $B$, and (ii) the set
$[k]\cup[l']$ forms an initial segment of $V$ w.r.t.~$<$.
\end{defi}

\begin{lem}\label{mapPhi}
Let $G$ be a bipartite graph, let $<$ be a $(k,l)$-admissible order, and
let $\Phi=\Phi_G^{(k,l)}:
\left(\R[G]_{(1,0)}\right)^l \oplus \left(\R[G]_{(0,1)}\right)^k
\longrightarrow \R[G]_{(1,1)}$ be the following linear map:
\[(f_1, f_2,\ldots, f_l, g_1,\ldots, g_k)\mapsto
\sum_{i=1}^l \theta_{i'}f_i + \sum_{j=1}^k \theta_{j}g_j. \]
Then \begin{enumerate}
\item[1.]
all elements of $\E^{kl}:=\{ij'\in\E : i\leq k \mbox{ or } j'\leq l'\}$
are edges of $G^{b,<}$ if and only if the dimension of the image of
$\Phi$ equals $ln+km-kl$;
\item[2.]
the pair $(k+1)(l+1)'$ is not an edge of $G^{b,<}$ if and only if $\Phi$
is surjective.
\end{enumerate}
\end{lem}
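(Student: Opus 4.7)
The plan is to reinterpret both parts via the greedy-basis description of $G^{b,<}$: the monomials $\{\theta_p\theta_{q'} : pq' \in E^{b,<}\}$ form a basis of $\R[G]_{(1,1)}$, and for $pq' \notin E^{b,<}$ the monomial $\theta_p\theta_{q'}$ expands in strictly lex-earlier members of this basis. First I would identify $\mathrm{Im}(\Phi)$ explicitly. Since each $f_i\in \R[G]_{(1,0)}=\Span\{x_1,\dots,x_n\}=\Span\{\theta_1,\dots,\theta_n\}$ (the $\theta_j$ form a basis by genericity, and likewise on the $B$-side), commutativity of $\R[G]$ gives
\[
\mathrm{Im}(\Phi)=\Span\{\theta_p\theta_{q'}: p\leq k\ \text{or}\ q'\leq l'\}=\Span\{\theta_p\theta_{q'}: pq'\in \E^{kl}\}.
\]

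Next I would show that $\E^{kl}$ is a lex-initial segment of $\E$, which is where $(k,l)$-admissibility is crucially used: because $[k]\cup[l']$ is an initial segment of $V$, the smaller endpoint (in $<$) of an edge $pq'$ lies in $[k]\cup[l']$ precisely when $pq'\in \E^{kl}$, so $pq'\in \E^{kl}$ and $p'q''<_{\lex} pq'$ force $p'q''\in \E^{kl}$. Combining this with the greedy-basis property, every $\theta_p\theta_{q'}$ with $pq'\in \E^{kl}\setminus E^{b,<}$ lies in $\Span\{\theta_{p'}\theta_{q''}: p'q''\in \E^{kl}\cap E^{b,<}\}$, and hence
\[
\dim\mathrm{Im}(\Phi)=|\E^{kl}\cap E^{b,<}|.
\]
Part 1 is then immediate: this dimension equals $|\E^{kl}|=ln+km-kl$ iff $\E^{kl}\subseteq E^{b,<}$.

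For part 2, surjectivity of $\Phi$ is equivalent to $|\E^{kl}\cap E^{b,<}|=|E^{b,<}|=|E|=\dim\R[G]_{(1,1)}$, i.e., to $E^{b,<}\subseteq \E^{kl}$. I would then invoke balanced-shiftedness of $G^{b,<}$ (established in the preceding lemma): any edge $pq'\notin \E^{kl}$ satisfies $p\geq k+1$ and $q'\geq (l+1)'$, so the shift property makes $pq'\in E^{b,<}$ equivalent to $(k+1)(l+1)'\in E^{b,<}$. Since $(k+1)(l+1)'\notin \E^{kl}$, this yields $E^{b,<}\subseteq \E^{kl}$ iff $(k+1)(l+1)'\notin E^{b,<}$, proving part 2.

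I do not expect a serious obstacle; the only delicate point is the lex-initial claim for $\E^{kl}$, since $(k,l)$-admissibility leaves the interleaving of $[k]$ with $[l']$ inside the initial segment unspecified — but the argument only uses that $[k]\cup[l']$ itself is a prefix of $V$, so this suffices.
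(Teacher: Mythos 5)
Your proposal is correct and takes essentially the same route as the paper: identify $\mathrm{Im}(\Phi)$ as $\Span\{\theta_p\theta_{q'}: pq'\in\E^{kl}\}$, observe that $(k,l)$-admissibility makes $\E^{kl}$ a lex-initial segment, deduce linear-independence/spanning statements from the greedy construction, and finish Part~2 with balanced-shiftedness. Your intermediate identity $\dim\mathrm{Im}(\Phi)=|\E^{kl}\cap E^{b,<}|$ is a clean unification that the paper implicitly uses but does not write down. One small imprecision: the claim that for $pq'\notin\E^{kl}$ the shift property makes ``$pq'\in E^{b,<}$ equivalent to $(k+1)(l+1)'\in E^{b,<}$'' is overstated --- shiftedness gives only the implication $pq'\in E^{b,<}\Rightarrow (k+1)(l+1)'\in E^{b,<}$ --- but that one direction is exactly what the contrapositive requires, so the conclusion $E^{b,<}\subseteq\E^{kl}\Leftrightarrow (k+1)(l+1)'\notin E^{b,<}$ stands.
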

\begin{proof} Since $\{\theta_1,\ldots,\theta_n\}$ is a basis of
$R[G]_{(1,0)}$ and $\{\theta_{1'},\ldots,\theta_{m'}\}$ is
a basis of $R[G]_{(0,1)}$, the set
$\{\theta_i\theta_{j'} : ij'\in \E^{kl}\}$ is a spanning set of the
image of $\Phi$. On the other hand, by $(k,l)$-admissibility of the
order $<$, $\E^{kl}$ is an initial segment
of $\E$ w.r.t~$<_{\lex}$. Hence, by the definition of $G^{b,<}$,
$\E^{kl}\subseteq E^b$ if and only if
$\{\theta_i\theta_{j'} : ij'\in \E^{kl}\}$ is a linearly independent
subset of $\R[G]_{(1,1)}$. Therefore, $\E^{kl}\subseteq E^b$ if and only
if $\{\theta_i\theta_{j'} : ij'\in \E^{kl}\}$ is a basis of the image of
$\Phi$. Part 1 follows.

The reasoning for Part 2 is similar:
since $G^b$ is balanced-shifted, the pair  $(k+1)(l+1)'$ is not
an edge of $G^{b}$ if and only if $E^b\subseteq \E^{kl}$.
Further, the fact that $\E^{kl}$ is an initial segment of $\E$
w.r.t~$<_{\lex}$ yields that $E^b\subseteq \E^{kl}$ if and only if
$\{\theta_i\theta_{j'} : ij'\in \E^{kl}\}$ is a spanning set of
$R[G]_{(1,1)}$, which implies Part 2. \end{proof}

It is worth noting that since $G^b$ is balanced-shifted,
the edge $(k+1)(l+1)'$ is not an edge of $G^b$ if and only if
$G^b$ does not contain $K_{k+1,l+1}$ as a subgraph.


\section{$(k,l)$-rigidity}\label{sec:(k,l)-rigidity}
The goal of this section is to develop a rigidity theory
for bipartite graphs, paralleling the one for general graphs
\cite{Asi-Roth1,Asi-Roth2,Whiteley-Cone,Wh-VertexSplitting}.
We recall from \cite[Section 2.7]{Kalai-AlgShifting} and \cite{Lee}, as discussed in detail in
\cite[Section 3.2]{Nevo-PhDthesis}, that a
(non-bipartite) graph $G$ on the vertex set $[n]$ is
{\em generically $d$-stress free} if and only if the pair
$(d+1)(d+2)$ is {\em not} an edge of the symmetric shifting of
$G$, $G^s$, and that $G$ is {\em generically $d$-rigid}
if and only if the pair $dn$ {\em is} an edge of $G^s$.
Motivated by these results, we make the following definition.
We use the same notation as in the previous section.

\begin{defi} \label{comb-rig}
 Let $G=(A\uplus B,E)$ be a bipartite graph,
let $k\leq n$ and $l\leq m$ be two fixed integers, and let $<$
be a $(k,l)$-admissible order on $A\uplus B$. We call $G$
 (generically) \emph{$(k,l)$-stress free} if the pair $(k+1)(l+1)'$
is not an edge of $G^{b,<}$. We say that  $G$ is (generically)
\emph{$(k,l)$-rigid} if all pairs $ij'\in\E$ such that $i\leq k$ or
$j'\leq l'$ are edges of $G^{b,<}$ .
\end{defi}

It follows from Lemma \ref{mapPhi} that being $(k,l)$-rigid
($(k,l)$-stress free, respectively) does not depend
on a particular choice of a $(k,l)$-admissible order $<$.
In fact, writing the matrix of the map $\Phi_G^{(k,l)}$ from
Lemma \ref{mapPhi} with respect to the basis
$(x_iy_j : ij'\in E)$ of $\R[G]_{(1,1)}$ and the basis
\[(l \mbox{ copies of } x_1, l \mbox{ copies of } x_2,
\ldots, l\mbox{ copies of } x_n,
k \mbox{ copies of } y_1, \ldots, k \mbox{ copies of } y_m)\]
of $\left(\R[G]_{(1,0)}\right)^l \oplus \left(\R[G]_{(0,1)}\right)^k$
yields the following definition
and proposition.

\begin{defi}
Let $G=(A\uplus B,E)$ be a bipartite graph and let
$\Theta\in \GL_n(\R)\times GL_m(\R)$ be a block-generic
matrix as in the previous section. Let $R^{(k,l)}(G)$  be an
$|E|\times (l|A|+k|B|)$ matrix whose rows are labeled by
the edges of $G$,  whose columns occur in blocks of size $l$
for each vertex in $A$ and blocks of size $k$ for each vertex
in $B$, and whose block corresponding to $v\in V$ and
$ab'\in E$ is given by
\[ \left\{
\begin{array}{ll}
(\theta_{i'b'}:\ 1\leq i\leq l) & \mbox{ if $v=a$},\\
(\theta_{ia}:\ 1\leq i\leq k) & \mbox{ if $v=b$}, \\
\, \, 0 &  \mbox{ if $v\notin \{a,b\}$}.
\end{array}
\right.
\]
The matrix $R^{(k,l)}(G)$ is called the
\emph{bipartite $(k,l)$-rigidity matrix of $G$}.
\end{defi}

\begin{prop} \label{rigid/stressfree-criterion}
Let $G=(A\uplus B, E)$ be a  bipartite graph. Then
$G$ is $(k,l)$-stress free if and only if
the rows of $R^{(k,l)}(G)$ are linearly independent, and
$G$ is $(k,l)$-rigid if and only if $\rank(R^{(k,l)}(G))=l|A|+k|B|-kl.$
\end{prop}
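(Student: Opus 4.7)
The plan is to observe that $R^{(k,l)}(G)$ is literally a matrix representation of the linear map $\Phi=\Phi_G^{(k,l)}$ from Lemma \ref{mapPhi}, written in suitable bases, and then read both statements off from the two parts of that lemma.

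First I would fix bases. For the codomain $\R[G]_{(1,1)}$ use $\{x_ay_b : ab'\in E\}$ (this is indeed a basis of size $|E|$, since monomials $x_iy_j$ with $ij'\notin E$ vanish in $\R[G]$ by definition of $I_G$, while the rest are part of a monomial basis of the Stanley--Reisner ring). For the domain $\left(\R[G]_{(1,0)}\right)^l\oplus\left(\R[G]_{(0,1)}\right)^k$ use the obvious basis obtained from $l$ copies of $\{x_1,\dots,x_n\}$ followed by $k$ copies of $\{y_1,\dots,y_m\}$; this yields $l|A|+k|B|$ basis vectors arranged in blocks, one block of size $l$ per vertex of $A$ and one block of size $k$ per vertex of $B$. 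Next I would compute $\Phi$ on a typical basis vector: putting $x_a$ in the $i$-th copy of $\R[G]_{(1,0)}$ (and zero elsewhere) gives $\theta_{i'}x_a=\sum_{t\,:\,at'\in E}\theta_{i'\,t'}\,x_ay_t$, so the corresponding column has entry $\theta_{i'b'}$ in the row indexed by $ab'\in E$ and $0$ otherwise. Similarly, putting $y_b$ in the $j$-th copy of $\R[G]_{(0,1)}$ produces the column whose entry in row $ab'$ equals $\theta_{ja}$ if $b$ is the $B$-endpoint and $0$ otherwise. Comparing with the definition of $R^{(k,l)}(G)$, these are exactly the prescribed blocks, so the matrix of $\Phi$ in the chosen bases is $R^{(k,l)}(G)$.

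With this identification, both claims follow immediately from Lemma \ref{mapPhi}. For the stress-free part: $G$ is $(k,l)$-stress free iff the edge $(k+1)(l+1)'$ is missing from $G^{b,<}$ iff, by part 2 of the lemma, $\Phi$ is surjective iff $\rank R^{(k,l)}(G)=|E|$, i.e., the rows of $R^{(k,l)}(G)$ are linearly independent. For the rigidity part: $G$ is $(k,l)$-rigid iff every pair in $\E^{kl}$ is an edge of $G^{b,<}$ iff, by part 1 of the lemma, $\dim(\mathrm{image}\,\Phi)=ln+km-kl$, which, given $|A|=n$ and $|B|=m$, is precisely the condition $\rank R^{(k,l)}(G)=l|A|+k|B|-kl$. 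The only substantive step is the basis-by-basis matching in the middle paragraph; once that bookkeeping is done correctly, the proposition is essentially a translation of Lemma \ref{mapPhi} from the language of the map $\Phi$ to the language of its matrix.
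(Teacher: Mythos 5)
Your proposal is correct and follows exactly the route the paper intends: the paper states just before Definition 3.3 that writing the matrix of $\Phi_G^{(k,l)}$ in the bases $(x_iy_j : ij'\in E)$ and the $l$- and $k$-fold copies of the $x$'s and $y$'s ``yields the following definition and proposition,'' and your middle paragraph is precisely the column-by-column verification that this matrix is $R^{(k,l)}(G)$. With that identification in hand, reading off both equivalences from the two parts of Lemma \ref{mapPhi} is exactly what the paper does implicitly.
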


Several remarks are in order. For a matrix $M$, let $\row(M)$
denote the span of the rows of $M$.

\begin{rem} \label{rank(K_(n,m))}
Observe that for any bipartite graph $G=(A\uplus B, E)$,
$\row(R^{(k,l)}(G))\subseteq \row(R^{(k,l)}(K_{A,B}))$. On the other hand,
Proposition \ref{rigid/stressfree-criterion} implies that
$G$ is $(k,l)$-rigid if and only if
$\rank (R^{(k,l)}(G))=l|A|+k|B|-kl$. Since $K_{A,B}^b=K_{A,B}$, the graph
$K_{A,B}$ is $(k,l)$-rigid, and so
\[\rank(R^{(k,l)}(K_{A,B}))=l|A|+k|B|-kl.\]
Therefore, $G$ is $(k,l)$-rigid if and only if
$\row(R^{(k,l)}(G))=\row(R^{(k,l)}(K_{A,B}))$.

In particular, it follows that as for classical combinatorial
rigidity theory, there is a matroid underlying bipartite rigidity theory ---
namely the matroid represented by the rows of the $(k,l)$-rigidity matrix
of $K_{A,B}$. A $(k,l)$-rigid graph is one whose edges are a spanning set for
this matroid; a $(k,l)$-stress free graph is one whose edges are independent; and a
$(k,l)$-rigid and stress free graph is a basis.
\end{rem}

It is also worth remarking that in the case of $k=l$,
our rigidity matrix coincides with the completion matrix
defined in \cite[eq.~(4.2)]{SingerCucuringu}. As a result,
a bipartite graph is $(k,k)$-rigid in our sense if and only
if it is a rectangular graph that is locally completable
in dimension $k$ in the sense of \cite{SingerCucuringu}.
(Given a matrix some of whose entries are known, the associated
``rectangular'' graph is the bipartite graph whose vertices correspond
to rows and columns of the matrix and whose edges correspond to the
known matrix entries.)

In addition,  in the case of $k=l$, our
rigidity matrix is related to Kalai's hyperconnectivity
matrix \cite{Kalai-hyperconnectivity}, $H^{k,<'}(G)$,
computed w.r.t.~the entries of $\Theta$
and a total order $<'$ on the vertices. The rank of $H^{k,<'}(G)$
is independent of the total order chosen (this follows from
the fact that the result of exterior algebraic shifting is
independent of the labeling of the vertices
\cite{Kalai:fVectorsFamiliesConvexSetsI-84, Kalai-AlgShifting}).
Hence for a bipartite graph $G$, we let $<'$ be any order that places
vertices of $A$ before those of $B$. The rigidity matrix $R^{(k,k)}(G)$
is then simply the transpose of the matrix obtained
from $H^{k,<'}(G)$ by multiplying the $B$-labeled rows of
$H^{k,<'}(G)$ by $-1$. In particular,
$\rank(R^{(k,k)}(G))=\rank(H^{(k,<')}(G)).$
Therefore, we have:
\begin{rem}\label{rem:Hyperconnectivity}
A bipartite graph $G$ is $(k,k)$-stress free
if and only if it is $k$-acyclic in the sense
of \cite{Kalai-hyperconnectivity}.
\end{rem}

Finally, we notice that the notions of $(k,l)$-stress freeness
and $(k,l)$-rigidity introduced here are equivalent to the
geometric notions of Section 1.2.
It follows from Proposition \ref{rigid/stressfree-criterion}
that $G$ is $(k,l)$-stress free if and only if the left kernel
(i.e., the space of linear dependencies of rows) of the rigidity
matrix $R^{(k,l)}(G)$ equals $(0)$. Similarly, by
Remark \ref{rank(K_(n,m))}, $G$ is $(k,l)$-rigid if and only if
$\row(R^{(k,l)}(G))=\row(R^{(k,l)}(K_{A,B}))$, which happens if and
only if $\ker(R^{(k,l)}(G))=\ker(R^{(k,l)}(K_{A,B}))$. Thus, considering
a $(k,l)$-embedding of $G$ given by
$\phi(a)=(\theta_{ia} \ : \ 1\leq i\leq k)\times (0)$ for $a\in A$
and $\phi(b)=(0)\times (\theta_{i'b} \ : \ 1\leq i\leq l)$ for $b\in B$,
we obtain:

\begin{rem} \label{geom-int}
A bipartite graph $G=(A\uplus B, E)$ is $(k,l)$-stress free
if and only if for a {\em generic} $(k,l)$-embedding $\phi$,
the condition in eq.~(\ref{e:iii-bip}) holds.
A bipartite graph $G$ is $(k,l)$-rigid if and only if for a generic
$(k,l)$-embedding $\phi$, the condition in eq.~(\ref{e:iv-bip}) holds.
\end{rem}

We are now in a position to establish the cone,
deletion, contraction, and gluing lemmas, paralleling the
corresponding statements in classical rigidity.

\begin{lem}[Deletion Lemma]\label{lem:bipDeletion}
Let $G$ be a bipartite graph, $v$ a vertex of
$G$ of degree $d$, and $G'=G-v$ the graph obtained
from $G$ by deleting $v$.
\begin{enumerate}
\item[1.] If $G'$ is $(k,l)$-stress free and
$d\leq \left\{\begin{array}{cc} l & \mbox{ if $v\in A$}\\
                         k & \mbox{ if $v\in B$}
\end{array}
\right.$, then $G$ is $(k,l)$-stress free.
\item[2.] If $G'$ is $(k,l)$--rigid and
$d\geq \left\{\begin{array}{cc} l & \mbox{ if $v\in A$}\\
                         k & \mbox{ if $v\in B$}
\end{array}
\right.$,
then $G$ is $(k,l)$-rigid.
\end{enumerate}
\end{lem}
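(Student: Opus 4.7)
The plan is to apply the rigidity matrix criterion of Proposition \ref{rigid/stressfree-criterion} and exploit the block structure of $R^{(k,l)}(G)$ obtained by singling out the columns indexed by $v$ and the rows indexed by edges at $v$. I will write out the case $v \in A$; the case $v \in B$ is symmetric, with the roles of $l$ and $k$ exchanged. Let $N(v) \subset B$ be the set of neighbors of $v$, so $|N(v)| = d$. Reorder the rows so that the $|E(G')|$ rows corresponding to edges of $G' = G - v$ come first and the $d$ rows corresponding to edges $vb'$, $b'\in N(v)$, come last; reorder the columns so that the $l$-block indexed by $v$ comes last. The crucial observation is that a row indexed by an edge of $G'$ has zero entries in the $v$-block (since such an edge is not incident to $v$), and that erasing this $v$-block recovers $R^{(k,l)}(G')$ verbatim.

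Next, I would define $B_v$ to be the $d \times l$ submatrix of $R^{(k,l)}(G)$ whose rows are indexed by the edges at $v$ and whose columns form the $v$-block; explicitly, the row for $vb'$ equals $(\theta_{i'b'} : 1 \leq i \leq l)$. The key inequality to establish is
\[
\rank R^{(k,l)}(G) \ \geq \ \rank R^{(k,l)}(G') + \rank B_v.
\]
This follows by a standard block-triangular argument: pick a maximal independent set of rows in $R^{(k,l)}(G')$ together with a set of rows at $v$ whose images in $B_v$ are linearly independent; any linear dependence among these, when projected to the $v$-block columns, must kill the second group first (since the first group has zero there), after which independence of the first group finishes the argument. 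Genericity of $\Theta$ guarantees $\rank B_v = \min(d,l)$, because the entries of $B_v$ are distinct algebraically independent entries of $\Theta$, so any $\min(d,l) \times \min(d,l)$ minor is a nonzero polynomial that does not vanish generically.

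With the inequality in hand, both parts follow by arithmetic. For Part 1, stress-freeness of $G'$ gives $\rank R^{(k,l)}(G') = |E(G')|$, and the hypothesis $d \leq l$ gives $\rank B_v = d$; the inequality yields $\rank R^{(k,l)}(G) \geq |E(G')| + d = |E(G)|$, which forces equality and hence linear independence of all rows of $R^{(k,l)}(G)$, i.e., $G$ is $(k,l)$-stress free. For Part 2, rigidity of $G'$ gives $\rank R^{(k,l)}(G') = l(|A|-1) + k|B| - kl$, and the hypothesis $d \geq l$ gives $\rank B_v = l$, so
\[
\rank R^{(k,l)}(G) \ \geq \ l(|A|-1) + k|B| - kl + l \ = \ l|A| + k|B| - kl;
\]
combined with the universal upper bound $\rank R^{(k,l)}(G) \leq l|A| + k|B| - kl$ from Remark \ref{rank(K_(n,m))}, this forces equality, so $G$ is $(k,l)$-rigid.

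The main obstacle, or rather the only point requiring care, is verifying that the generic entries of $\Theta$ do in fact make $B_v$ attain its maximum rank $\min(d,l)$; this is routine because the relevant $\theta_{i'b'}$ with $b' \in N(v)$ are among the algebraically independent entries of $\Theta$. The case $v \in B$ proceeds identically, with the $v$-block having width $k$ (so that $B_v$ becomes a $d \times k$ matrix with entries $\theta_{ia}$), and the hypotheses $d \leq k$ and $d \geq k$ replacing $d \leq l$ and $d \geq l$ respectively.
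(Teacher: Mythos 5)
Your proof is correct and follows essentially the same approach as the paper: both exploit the block structure of $R^{(k,l)}(G)$ obtained by singling out the $v$-columns and the rows at $v$, derive the rank inequality $\rank R^{(k,l)}(G) \geq \rank R^{(k,l)}(G') + \min(d,l)$, and conclude by arithmetic using Proposition~\ref{rigid/stressfree-criterion}. You simply spell out the block-triangular rank argument and the genericity justification for $\rank B_v = \min(d,l)$ a bit more explicitly than the paper does.
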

\begin{proof}
Assume $v\in A$ (the case $v\in B$ is very similar).
The matrix $R^{(k,l)}(G)$ is obtained from $R^{(k,l)}(G')$
by adjoining the $l$ columns corresponding to $v$ and the $d$ rows
corresponding to the edges containing $v$. As $v$ is not
the end-point of any edge of $G'$, these $l$ new columns
consist of zeros followed by a generic $d\times l$ block.
Thus,
\begin{equation} \label{rank+min}
\rank(R^{(k,l)}(G))\geq \rank(R^{(k,l)}(G'))+\min\{d,l\}.
\end{equation}

Now, if $G'$ is $(k,l)$-stress-free and $d\leq l$, then by
(\ref{rank+min}) and Proposition \ref{rigid/stressfree-criterion},
\[
|E(G)| \geq \rank(R^{(k,l)}(G))
\geq \rank(R^{(k,l)}(G'))+\min\{d,l\}=|E(G')|+d=|E(G)|.
\]
Hence $\rank(R^{(k,l)}(G))=|E(G)|$, and so $G$ is $(k,l)$-stress free
by Proposition \ref{rigid/stressfree-criterion}.

Similarly, if $G'$ is $(k,l)$-rigid and $d\geq l$, then by (\ref{rank+min})
and Proposition \ref{rigid/stressfree-criterion},
\begin{eqnarray*}
l|A|+k|B|-kl &\geq& \rank(R^{(k,l)}(G))
\geq \rank(R^{(k,l)}(G'))+\min\{d,l\}\\
&=& \left[ l(|A|-1)+k|B|-kl\right]+l=l|A|+k|B|-kl.
\end{eqnarray*}
Hence $\rank(R^{(k,l)}(G))=l|A|+k|B|-kl$, and so $G$ is $(k,l)$-rigid
by Proposition \ref{rigid/stressfree-criterion}.
\end{proof}

\begin{lem}[Contraction Lemma]\label{lem:bipVertexSpliting}
Let $G=(V,E)$ be a bipartite graph, $v$ and $w$ two vertices of
$G$ that belong to the same part, $C$ the set of common
neighbors of $v$ and $w$, and $G'=(V-\{v\}, E')$  the graph obtained
from $G$ by contracting $v$ with $w$.
\begin{enumerate}
\item[1.] If $G'$ is $(k,l)$-stress free and
 $|C|\leq \left\{\begin{array}{cc} l & \mbox{ if $v\in A$}\\
                         k & \mbox{ if $v\in B$}
\end{array}
\right.$, then $G$ is $(k,l)$-stress free.

\item[2.] If $G'$ is $(k,l)$-rigid and
 $|C|\geq \left\{\begin{array}{cc} l & \mbox{ if $v\in A$}\\
                         k & \mbox{ if $v\in B$}
\end{array}
\right.$, then $G$ is $(k,l)$-rigid.
\end{enumerate}
\end{lem}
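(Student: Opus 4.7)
Following the pattern of the Deletion Lemma, my plan is to reduce both parts to the single rank inequality
\[
\rank R^{(k,l)}(G) \geq \rank R^{(k,l)}(G') + \min(|C|, l),
\]
assuming without loss of generality that $v \in A$ (the case $v \in B$ is completely analogous). Since $|E(G)| = |E(G')| + |C|$ and $\rank R^{(k,l)}(G) \leq l|A|+k|B|-kl$ by Remark \ref{rank(K_(n,m))}, combining this inequality with Proposition \ref{rigid/stressfree-criterion} forces $\rank R^{(k,l)}(G) = |E(G)|$ when $|C|\leq l$ and $\rank R^{(k,l)}(G) = l|A|+k|B|-kl$ when $|C|\geq l$, which are precisely the two desired conclusions.

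To prove the rank inequality I will invoke the geometric interpretation of Remark \ref{geom-int}. First, pick a generic $(k,l)$-embedding $\phi'$ of $G'$ and extend it to a (non-generic) embedding $\phi_0$ of $G$ by setting $\phi_0(u) := \phi'(u)$ for every $u \in V\setminus\{v\}$ and $\phi_0(v) := \phi'(w)$. Because the rank of the $(k,l)$-rigidity matrix is lower semi-continuous in the entries of $\Theta$, the rank at a generic $\phi$ dominates the rank at $\phi_0$, so it suffices to bound $\rank R^{(k,l)}(G,\phi_0)$ from below.

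Next, I will introduce a linear pushforward map $p : \mathrm{Stress}(G,\phi_0) \to \mathrm{Stress}(G',\phi')$ defined on the edges of $G'$ by $p(w)_{ab'} := w_{ab'}$ for $a \in A\setminus\{v,w\}$, $p(w)_{wb'} := w_{wb'}$ for $b' \in N_G(w)\setminus N_G(v)$, $p(w)_{wb'} := w_{vb'}$ for $b' \in N_G(v)\setminus N_G(w)$, and $p(w)_{wb'} := w_{vb'} + w_{wb'}$ for $b' \in C$. A case-by-case check of the vertex equilibrium equations for $(G',\phi')$ at $w$ and at each $b' \in B$ confirms that $p(w)$ is indeed a stress of $G'$. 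The kernel of $p$ consists of the $G$-stresses supported on $\{vb', wb' : b' \in C\}$ with $w_{vb'} = -w_{wb'} =: c_{b'}$; the vertex conditions at $b' \in C$ are automatic since $\phi_0(v) = \phi_0(w)$, while the $v$- and $w$-conditions both reduce to $\sum_{b' \in C} c_{b'}\phi'(b') = 0$. Genericity of $\phi'$ places the $|C|$ vectors $\phi'(b') \in \R^l$ (for $b' \in C$) in general linear position, so this kernel has dimension $\max(|C|-l,0)$.

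Combining $\dim \mathrm{Stress}(G,\phi_0) \leq \dim\ker(p) + \dim \mathrm{Stress}(G',\phi')$ with the identity $\dim \mathrm{Stress}(H,\psi) = |E(H)| - \rank R^{(k,l)}(H,\psi)$ yields the desired rank inequality at $\phi_0$, and hence at generic $\phi$. I expect the principal difficulty to be the case-by-case verification that $p$ actually lands in $\mathrm{Stress}(G',\phi')$: each case of the vertex equilibrium check uses $\phi_0(v) = \phi_0(w)$ in an essential way, which is precisely why one must specialize first and transfer the rank bound to a generic embedding afterward via semi-continuity.
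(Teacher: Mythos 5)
Your proof is correct, and its core mechanism is the same as the paper's: specialize $\phi_0(v)=\phi'(w)$, then compare stresses of $(G,\phi_0)$ to those of $(G',\phi')$ via the merging map $p$ (the paper writes the same map $u'_{wc}=u_{vc}+u_{wc}$, $u'_e=u_e$), and finally transfer to the generic rank by semi-continuity. Where you differ is in the packaging: the paper proves the two parts by separate, closely parallel matrix arguments — Part~1 shows any dependence among the rows of the specialized matrix $M$ must be supported on $\{vc,wc:c\in C\}$ and then kills it with the block-diagonal $|C|\times l$ generic block, while Part~2 first reduces WLOG to $|C|=l$ and proves $p$ is injective there — whereas you compute $\dim\ker p=\max(|C|-l,0)$ exactly for any $|C|$, which yields the single rank inequality $\rank R^{(k,l)}(G)\geq\rank R^{(k,l)}(G')+\min(|C|,l)$ and hence both conclusions at once. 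This is a clean unification of what is, underneath, the same argument; the kernel description and the verification that $p$ lands in $\mathrm{Stress}(G',\phi')$ (using $\phi_0(v)=\phi_0(w)$ in the $b'$-equations) are both correct as stated.
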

\begin{proof}
For both parts assume that $v,w\in A$ (the case of $v,w\in B$
is analogous), and let $M$ be the matrix obtained from $R^{(k,l)}(G)$
by replacing each $\theta_{iv}$ with $\theta_{iw}$,  $1\leq i\leq k$.

\smallskip\noindent{\bf Part 1:}
By Proposition \ref{rigid/stressfree-criterion}, to complete the
proof we must show that if $R^{(k,l)}(G')$ has linearly independent rows,
then so does $R^{(k,l)}(G)$. As $M$ is a specialization of $R^{(k,l)}(G)$,
it suffices to check that the rows of $M$ are linearly independent.
And, indeed, if there is a linear dependence among the rows
of $M$, then it induces the same dependence (i.e., with the same
coefficients) among the rows of the matrix $M'$ obtained from $M$
by adding the columns of $v$ to the columns of $w$ and deleting
the columns of $v$. However, since $G'$ is obtained from $G$ by
contracting $v$ with $w$, it follows that the matrix $M'$ is obtained
from $R^{(k,l)}(G')$ by duplicating $|C|$ rows: for each $c\in C$,
the row of $R^{(k,l)}(G')$ labeled by $wc$ appears in $M'$ twice ---
once labeled by $wc$ and another time by $vc$.
As the rows of $R^{(k,l)}(G')$  are linearly independent, we conclude
that each nontrivial dependence among the rows of $M$
is supported on the rows labeled by $\{vc, wc  :  c\in C\}$.
Since $|C|\leq l$, and since the restriction of these
$2|C|$ rows to the $2l$ columns of $v$ and $w$ is of
the form {\tiny $\left[\begin{array}{cc} Z & 0\\ 0 &Z\end{array}\right]$},
where $Z$ is a generic $|C|\times l$ matrix,
we infer that the rows of $M$, and hence also of $R^{(k,l)}(G)$
are linearly independent. The assertion of Part 1 follows.

\smallskip\noindent{\bf Part 2:}
According to Proposition \ref{rigid/stressfree-criterion},
it suffices to show that if $\rank(R^{(k,l)}(G'))= l(|A|-1)+k|B|-kl$,
then $\rank(R^{(k,l)}(G))=l|A|+k|B|-kl$. Since
Remark \ref{rank(K_(n,m))} implies that $(k,l)$-rigidity can be destroyed,
but not created by deleting edges, we
assume that $v,w$ have exactly $l$ common neighbors, as the extra edges can be deleted.
Hence, $|E'|=|E|-l$.

The argument used in Part 1 leads to a stronger statement:
if $|C|=l$, then
\[\dim \Lker (R^{(k,l)}(G)) \leq
\dim \Lker (M) \leq \dim \Lker (R^{(k,l)}(G')).
\]
(Here $\Lker$ denotes the left kernel.) Indeed, if $|C|=l$,
then the matrix $Z$ from Part 1 is invertible, and so
the map sending a vector $(u_{e})_{e\in E}\in \Lker (M)$
to $(u'_{e})_{e\in E'}\in \Lker (R^{(k,l)}(G'))$,
where $u'_{wc}=u_{vc}+u_{wc}$ if $c\in C$ and
$u'_e=u_e$ otherwise, is injective. Therefore,
\begin{eqnarray*}
\rank(R^{(k,l)}(G)) &=&
|E|-\dim \Lker (R^{(k,l)}(G)) \\
  &\geq& |E'|+l -\dim \Lker (R^{(k,l)}(G')) \\
&=& \rank (R^{(k,l)}(G'))+ l  \\
 &=& l(|A|-1)+k|B|-kl+l \\
&=& l|A|+k|B|-kl,
\end{eqnarray*}
and the result follows.
\end{proof}

The following lemma is a bipartite analog of the gluing
lemma \cite[Theorem 2]{Asi-Roth2} (in the plane)
and \cite[Lemma 11.1.9]{Whiteley96-SomeMatroids}
(for the general case) that treats generic rigidity for the
union of general graphs.

\begin{lem}[Gluing Lemma]\label{lem:bipGluing}
Let $G=(A\uplus B, E)$ be a bipartite graph written as
the union $G=G_1\cup G_2$ of two bipartite graphs
$G_1=(A_1\uplus B_1, E_1)$ and $G_2=(A_2\uplus B_2, E_2)$.
\begin{enumerate}
\item[1.] If $G_1$ and $G_2$ are $(k,l)$-rigid, $|A_1\cap A_2|\geq k$,
and $|B_1\cap B_2|\geq l$, then $G$ is $(k,l)$-rigid.
\item[2.] If $G_1$ and $G_2$ are $(k,l)$-stress free,
and $G_1\cap G_2$  is $(k,l)$-rigid, then $G$ is
$(k,l)$-stress free.
\item[3.] If $G_1$ and $G_2$ are $(k,l)$-stress free,
and either $|A_1\cap A_2|\leq k$ and $|B_1\cap B_2|=0$, or
$|A_1\cap A_2|=0$ and $|B_1\cap B_2|\leq l$,
then $G$ is $(k,l)$-stress free.
\end{enumerate}
\end{lem}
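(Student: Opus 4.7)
The plan is to treat Part~1 geometrically using Remark~\ref{geom-int} and Parts~2 and~3 algebraically via the block structure of $R^{(k,l)}(G)$ together with Proposition~\ref{rigid/stressfree-criterion}.

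For Part~1, I would fix a generic $(k,l)$-embedding $\phi$ and let $V$ be any velocity field on $A\uplus B$ satisfying (\ref{e:iv-bip}) on every edge of $G$. Since $E_i\subseteq E$, the restriction of $V$ to $A_i\uplus B_i$ satisfies (\ref{e:iv-bip}) on $G_i$, so by $(k,l)$-rigidity of $G_i$ there exists $L_i\in\R^{l\times k}$ with $V(a)=L_i\phi(a)$ for $a\in A_i$ and $V(b)=-L_i^{T}\phi(b)$ for $b\in B_i$. Comparing on $A_0:=A_1\cap A_2$ yields $(L_1-L_2)\phi(a)=0$ for every $a\in A_0$; since $|A_0|\geq k$ and the vectors $\{\phi(a):a\in A_0\}$ are generic they span $\R^k$, forcing $L_1=L_2$ (the condition $|B_0|\geq l$ produces the same conclusion via the $L^{T}$ equations on $B_0$). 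A single trivial motion $L$ then represents $V$ on all of $A\uplus B$, so (\ref{e:iv-bip}) holds on every pair and $G$ is $(k,l)$-rigid.

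For Parts~2 and~3, I would decompose $R^{(k,l)}(G)$ using the row blocks indexed by $E_1\setminus E_0$, $E_0$, $E_2\setminus E_0$ and the column blocks indexed by $A_1\setminus A_0$, $A_0$, $A_2\setminus A_0$, $B_1\setminus B_0$, $B_0$, $B_2\setminus B_0$. The key observation is that the $E_2$-rows vanish on the columns of $(A_1\setminus A_0)\uplus(B_1\setminus B_0)$ (and symmetrically for $E_1$), while the $E_0$-rows are supported only on the $A_0\uplus B_0$-columns. Given a putative dependence $\alpha R_{E_1\setminus E_0}+\beta R_{E_0}+\gamma R_{E_2\setminus E_0}=0$ among rows of $R^{(k,l)}(G)$, projection onto the $G_1$-only columns forces $(\alpha,0)R^{(k,l)}(G_1)$ (with $0$ on the $E_0$-coordinates) to be supported only in the $A_0\uplus B_0$-columns; call the resulting vector $F_1\in\R^{l|A_0|+k|B_0|}$. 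Symmetrically we produce $F_2$ from $\gamma$, and the remaining dependence on $A_0\uplus B_0$-columns reads $F_1+\beta R^{(k,l)}(G_0)+F_2=0$.

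For Part~2, $(0,F_1)$ lies in $\row(R^{(k,l)}(G_1))$ and hence in $\row(R^{(k,l)}(K_{A_1,B_1}))$ (Remark~\ref{rank(K_(n,m))}); dualizing, $F_1$ is orthogonal to the restriction to $A_0\uplus B_0$ of every trivial motion of $(A_1,B_1)$. Since $G_0$ is $(k,l)$-rigid, the parameterization $L\mapsto(L\phi(a),-L^{T}\phi(b))$ is injective on $(A_0,B_0)$, so these restrictions coincide with the full $kl$-dimensional space of trivial motions of $(A_0,B_0)$; hence $F_1\in\row(R^{(k,l)}(K_{A_0,B_0}))=\row(R^{(k,l)}(G_0))$ by rigidity of $G_0$. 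Writing $F_1=\beta_1 R^{(k,l)}(G_0)$, the identity $(\alpha,-\beta_1)R^{(k,l)}(G_1)=0$ combined with the stress-freeness of $G_1$ forces $\alpha=\beta_1=0$, so $F_1=0$; symmetrically $\gamma=F_2=0$. Finally $\beta R^{(k,l)}(G_0)=0$, and since $G_0$ is stress-free as a subgraph of the stress-free $G_1$, we conclude $\beta=0$. For Part~3, say $|A_0|\leq k$ and $B_0=\emptyset$, whence $E_0=\emptyset$; the orthogonality condition reduces to $\sum_{a\in A_0}\phi(a)F_1(a)^{T}=0$, and the hypothesis $|A_0|\leq k$ with the genericity of the $\phi(a)$'s makes them linearly independent, forcing $F_1(a)=0$ for every $a\in A_0$. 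Thus $\alpha R^{(k,l)}(G_1)=0$ and stress-freeness of $G_1$ yields $\alpha=0$; the conclusion $\gamma=0$ is symmetric. I expect the main obstacle to be the orthogonality/projection step --- namely, passing from $(0,F_1)\in\row(R^{(k,l)}(K_{A_1,B_1}))$ to a usable constraint on $F_1$ alone; once that is set up correctly, the remaining steps are bookkeeping with the block structure.
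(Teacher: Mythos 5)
Your proof is correct, and the comparison with the paper's is instructive. For Part~1 you take a genuinely different route: you pass to the geometric picture (Remark~\ref{geom-int}), use that $(k,l)$-rigidity of $G_i$ forces any admissible velocity field restricted to $G_i$ to be a trivial motion $T_{L_i}$, and then match $L_1=L_2$ across the overlap. The paper instead works combinatorially: it replaces $G_1,G_2$ by the complete bipartite graphs on their vertex sets (legitimate by Remark~\ref{rank(K_(n,m))}), then builds $G$ from $G_1$ by attaching the vertices of $G_2\setminus G_1$ one at a time and invoking the Deletion Lemma. Both are valid; your version is more geometric and, as you observe, actually uses only one of the two overlap hypotheses ($|A_0|\ge k$ alone already pins down $L_1=L_2$, since the $\phi(a)$, $a\in A_0$, generically span $\R^k$) --- the lemma's hypotheses are redundant in this sense, and in fact a careful ordering of the vertex-addition sequence shows the paper's proof is also robust to this.

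For Parts~2 and~3 your argument is closer in spirit to the paper's, but you bypass the separate intersection statement (the paper's Lemma~\ref{intersection}, proved by exhibiting explicit bases $\B_i$ of the spaces $\V_i^\perp$ built from the vectors $w^{rp}$). You instead project the putative dependence onto the $G_1$-only columns to isolate the vector $F_1$, dualize against the kernel of $R^{(k,l)}(K_{A_1,B_1})$ --- whose elements are exactly the $L$-parameterized trivial motions, i.e.\ the paper's $w^{rp}$'s --- and then argue directly that $F_1$ lies in $\row(R^{(k,l)}(G_0))$ (Part~2, using rigidity of $G_0$ to make the restricted trivial motions exhaust $\ker R^{(k,l)}(K_{A_0,B_0})$) or that $F_1=0$ (Part~3, using linear independence of the $\phi(a)$, $a\in A_0$, for $|A_0|\le k$). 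This is essentially the same duality as in the paper's Lemma~\ref{intersection}, unfolded inline rather than packaged as a standalone statement; the bookkeeping with $\beta_1$ and the stress-freeness of $G_0\subseteq G_1$ to kill $\beta$ at the end are correct. The only cosmetic gap is that you treat only one of the two symmetric cases in Part~3 ($|A_0|\le k$, $B_0=\emptyset$); the other case (and the symmetric argument giving $\gamma=0$) should at least be flagged explicitly.
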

\begin{proof}
To prove Part 1, by Remark \ref{rank(K_(n,m))} we may assume that
$G_1$ and $G_2$ are complete bipartite graphs. Construct $G$ from $G_1$ by
adding the vertices of $G_2\setminus G_1$ one by one;
when adding a vertex $v$ add also the edges in $G$ between $v$
and the former vertices (namely, the vertices of $G_1$ and
the vertices of $G_2\setminus G_1$ that were added before $v$).
Note that since $|A_1\cap A_2|\geq k$ and since each vertex
$v\in B_2$ is connected to all vertices of $A_1\cap A_2$, every time we
add a vertex $v\in B_2\setminus B_1$, we add it as a vertex of
degree at least $k$. Similarly, every time we add a vertex
$v\in A_2\setminus A_1$, we add it as a vertex of degree at least $l$.
Since $G_1$ is $(k,l)$-rigid, the Deletion Lemma
(Lemma \ref{lem:bipDeletion}) combined with induction
implies that all graphs in this sequence, including $G$, are $(k,l)$-rigid.

To prove Parts 2 and 3, consider the spaces
$\row(R^{(k,l)}(G_i))$, $\row(R^{(k,l)}(K_{A_i,B_i}))$ ($i=1,2$) as well as
$\row(R^{(k,l)}(G_1\cap G_2))$ and $\row(R^{(k,l)}(K_{A_1,B_1}\cap K_{A_2,B_2}))$
as subspaces of $\R^{l|A|+k|B|}$.
Note that under the conditions of either of the Parts 2 and 3
\[\row(R^{(k,l)}(G_1\cap G_2))=\row(R^{(k,l)}(K_{A_1,B_1}\cap K_{A_2,B_2})).\]
In the case of Part 2, this follows from the $(k,l)$-rigidity of $G_1\cap G_2$,
and in the case of Part 3, from the equality of graphs:
$K_{A_1,B_1}\cap K_{A_2,B_2}=G_1\cap G_2$
(indeed, both graphs are edgeless graphs on the same number of vertices).

Our proof relies on Lemma \ref{intersection} below.
As $\row(R^{(k,l)}(G_i))\subseteq \row(R^{(k,l)}(K_{A_i,B_i}))$ (for $i=1,2$),
Lemma \ref{intersection} yields
\begin{eqnarray*}
\row(R^{(k,l)}(G_1)) \cap \row(R^{(k,l)}(G_2)) &\subseteq&
\row(R^{(k,l)}(K_{A_1,B_1}))\cap\row(R^{(k,l)}(K_{A_2,B_2}))\\
&=&\row(R^{(k,l)}(K_{A_1\cap A_2,B_1\cap B_2}))\\
&=& \row(R^{(k,l)}(G_1\cap G_2)).
\end{eqnarray*}

Assume now that the rows $(R_e: e\in E)$ of $R^{(k,l)}(G)$
satisfy an $\R$-linear dependence
\begin{equation}\label{eq:dependence}
\sum_{e\in E_1}\alpha_e R_e = \sum_{e\in E_2\setminus E_1}\alpha_e R_e.
\end{equation}
Since the left-hand side of (\ref{eq:dependence}) is evidently
in $\row(R^{(k,l)}(G_1))$ and the right-hand side  is
in $\row(R^{(k,l)}(G_2))$, the previous inclusion implies
that the expression on the left-hand side of (\ref{eq:dependence})
is in the row span of $R^{(k,l)}(G_1\cap G_2)$.
Thus, the left-hand side of (\ref{eq:dependence})
can be rewritten using only edges $e\in E_1\cap E_2$,
and as $G_2$ is $(k,l)$-stress free,
all the coefficients on the right-hand side of (\ref{eq:dependence}) are
zeros. Then, as $G_1$ is $(k,l)$-stress free,
all the coefficients on the left-hand side of (\ref{eq:dependence}) are zeros
as well. Hence, $G$ is $(k,l)$-stress free.
\end{proof}

For $i\in[l], a\in A$, let $e_{i,a}$ denote the unit vector of
$\R^{l|A|+k|B|}$ with the coordinate $1$ in the $i$th of the $l$ slots
allotted for $a$ and zeros everywhere else. Define
$e_{j,b'}$ for $j\in[k],b'\in B$ similarly. Using this notation, the row of
$R^{(k,l)}(K_{A,B})$ corresponding to the edge $ab'$ can be written as
$\sum_{i=1}^l \theta_{i'b'}e_{i,a}+\sum_{j=1}^k \theta_{ja}e_{j,b'}$.

To finish the proof of the Gluing Lemma, it only remains to verify the
following.
\begin{lem} \label{intersection}
Let $A=A_1\cup A_2$ and $B=B_1\cup B_2$
be finite sets such that either
\begin{enumerate}
\item [(i)] $|A_1\cap A_2|\geq k$ and
$|B_1\cap B_2|\geq l$, 
or
\item[(ii)]
$|A_1\cap A_2|\cdot |B_1\cap B_2|=0$, $|A_1\cap A_2|\leq k$, and
$|B_1\cap B_2|\leq l$. 
\end{enumerate}
Let
$\V_1= \row(R^{(k,l)}(K_{A_1,B_1}))$,
$\V_2=\row(R^{(k,l)}(K_{A_2,B_2}))$,
$\V_\cap=\row(R^{(k,l)}(K_{A_1\cap A_2,B_1\cap B_2}))$
be three vector spaces considered as subspaces of $\R^{l|A|+k|B|}$.
Then $\V_1\cap \V_2=\V_\cap$.
\end{lem}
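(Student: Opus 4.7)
The inclusion $\V_\cap\subseteq\V_1\cap\V_2$ is immediate, since each row of $R^{(k,l)}(K_{A_1\cap A_2,B_1\cap B_2})$ agrees with a row of each $R^{(k,l)}(K_{A_i,B_i})$ up to zero blocks in the extra coordinates; the substance is the reverse inclusion, and my plan is to handle the two hypothesis cases by separate arguments.

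In case (i), I would invoke Part 1 of the Gluing Lemma (Lemma \ref{lem:bipGluing}), whose proof uses only the Deletion Lemma and Remark \ref{rank(K_(n,m))} and is therefore independent of the lemma being proved. Part 1 applied to $K_{A_1,B_1}$ and $K_{A_2,B_2}$ shows that their union is $(k,l)$-rigid on $(A_1\cup A_2)\uplus(B_1\cup B_2)$, and since the row span of the union's rigidity matrix equals $\V_1+\V_2$, Remark \ref{rank(K_(n,m))} gives
\[
\dim(\V_1+\V_2)=l|A_1\cup A_2|+k|B_1\cup B_2|-kl.
\]
Inclusion-exclusion on the vertex counts together with $\dim\V_i=l|A_i|+k|B_i|-kl$ then yields $\dim(\V_1\cap\V_2)=l|A_1\cap A_2|+k|B_1\cap B_2|-kl=\dim\V_\cap$, and equality follows from the inclusion already noted.

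For case (ii), by the symmetry interchanging $(A,k)$ and $(B,l)$ I may assume $B_1\cap B_2=\emptyset$ and $c:=|A_1\cap A_2|\le k$; then $\V_\cap=\{0\}$, so it suffices to show $\V_1\cap\V_2=\{0\}$. Any $v\in\V_1\cap\V_2$ is supported on coordinates common to both row spans, namely the positions $(i,a)$ with $a\in A_1\cap A_2$, so I would prove the sharper statement that every $v\in\V_1$ with support in those positions is zero. Writing $v=\sum_{ab'\in E(K_{A_1,B_1})}\alpha_{ab'}R_{ab'}$ and assembling the coefficients into the $|A_1|\times|B_1|$ matrix $M=(\alpha_{ab'})$, the vanishing of the $(j,b')$-coordinates for $b'\in B_1$ reads $\Theta_{A_1}M=0$, where $\Theta_{A_1}:=(\theta_{ja})_{j\in[k],\,a\in A_1}$, and the vanishing of the $(i,a)$-coordinates for $a\in A_1\setminus A_2$ reads $(M\Theta_{B_1}^T)|_{A_1\setminus A_2}=0$, where $\Theta_{B_1}:=(\theta_{i'b'})_{i\in[l],\,b'\in B_1}$. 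Setting $N:=M\Theta_{B_1}^T$ and right-multiplying the first equation by $\Theta_{B_1}^T$ gives $\Theta_{A_1}N=0$; splitting $N$ according to $A_1=(A_1\cap A_2)\sqcup(A_1\setminus A_2)$ and using the second vanishing condition to kill the $A_1\setminus A_2$ block, this collapses to $\Theta_{A_1\cap A_2}\cdot N|_{A_1\cap A_2}=0$. Since $\Theta_{A_1\cap A_2}$ is a generic $k\times c$ matrix with $c\le k$, it is injective, forcing $N|_{A_1\cap A_2}=0$, which is exactly $v=0$.

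The main obstacle is case (ii): case (i) is routine dimension bookkeeping once Part 1 of the Gluing Lemma is in hand, whereas case (ii) requires carefully threading the two generic matrix equations on $M$. The crucial step is the last one, where the hypothesis $|A_1\cap A_2|\le k$ enters precisely as the injectivity of $\Theta_{A_1\cap A_2}$; if $c$ exceeded $k$ this argument would break and the statement itself would fail, as one sees already from the case $G_1=G_2$.
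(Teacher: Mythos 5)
Your proof is correct, and it diverges from the paper's in a meaningful way in case~(i), while case~(ii) is a repackaging of essentially the same computation.

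For case~(i), the paper works on the orthogonal-complement side: it exhibits explicit bases $\B_1,\B_2,\B_\cap$ of $\V_1^\perp,\V_2^\perp,\V_\cap^\perp$ built from the vectors $w^{rp}=\sum_{a}\theta_{pa}e_{r,a}-\sum_{b'}\theta_{r'b'}e_{p,b'}$ together with unit vectors for the missing vertices, and then observes $\B_1\cup\B_2=\B_\cap$. Your route instead invokes Part~1 of the Gluing Lemma to get $\dim(\V_1+\V_2)$ and finishes by inclusion--exclusion on dimensions, using $(k,l)$-rigidity of the three complete bipartite graphs to evaluate each summand. You correctly flagged the one point that matters for this detour, namely that the proof of Part~1 of the Gluing Lemma depends only on the Deletion Lemma and Remark~\ref{rank(K_(n,m))}, not on Lemma~\ref{intersection}, so there is no circularity. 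Your version is shorter and arguably cleaner, at the cost of being less explicit: the paper's basis description of $\V_i^\perp$ is a small standalone piece of information, and it is also what the paper reuses immediately in its argument for case~(ii), so the paper's structure is more self-contained.

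For case~(ii), both proofs reduce, after the support observation, to the injectivity of the generic $k\times c$ matrix $\Theta_{A_1\cap A_2}$ with $c=|A_1\cap A_2|\le k$. The paper reaches that point by pairing a putative $v\in\Span\{e_{i,a}:a\in A_1\cap A_2\}$ against the $w^{rp}$ (which annihilate $\V_1$), while you reach it by writing $v$ as a combination of rows, encoding the coefficients in a matrix $M$, and threading the two systems $\Theta_{A_1}M=0$ and $(M\Theta_{B_1}^T)|_{A_1\setminus A_2}=0$ to isolate $\Theta_{A_1\cap A_2}N|_{A_1\cap A_2}=0$. These are the same linear algebra in different coordinates; yours has the minor advantage of proving the slightly stronger statement that any $v\in\V_1$ supported on the $A_1\cap A_2$ positions vanishes, without needing to know $v\in\V_2$ beyond its support. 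Your closing remark that the bound $c\le k$ is exactly where the hypothesis enters, and that the statement fails without it, is the right sanity check.
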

\begin{proof} We first treat case (i). 
It suffices to show that $\V_1^{\perp}+\V_2^{\perp}=\V_\cap^{\perp}$,
where $\V^{\perp}$ denotes the orthogonal complement of $\V$
in $\R^{l|A|+k|B|}$ (equivalently, it denotes the kernel of the
corresponding matrix). We will do this by explicitly computing
$\V_1^{\perp}$, $\V_2^{\perp}$, and $\V_\cap^{\perp}$.

For $r\in[l]$ and $p\in[k]$, define $w^{rp}\in \R^{l|A|+k|B|}$ by
$w^{rp}=\sum_{a\in A} \theta_{pa}e_{r,a} -
\sum_{b'\in B} \theta_{r'b'}e_{p,b'}$, where $r'$ is the element
of $[l']$ corresponding to $r$ in $[l]$.
Note that $w^{rp}$ is orthogonal to all rows of $R^{(k,l)}(K_{A,B})$,
and hence also to all elements of $\V_1$. Thus
\[
\B_1:=\{w^{rp} :r\in[l], p\in[k]\} \cup
\{e_{i,a} : i\in[l], a\in A\setminus A_1\}\cup
\{e_{j,b'} : j\in[k], b'\in B\setminus B_1\} \subset \V_1^\perp.
\]
Moreover, the vectors of $\B_1$ are linearly independent:
indeed, using the unit vectors appearing in the above union,
we only need to check that the set
$\{w^{rp}_1:=\sum_{a\in A_1} \theta_{pa}e_{r,a} -
\sum_{b'\in B_1} \theta_{r'b'}e_{p,b'} :r\in[l], p\in[k]\}$
is linearly independent.
However, restricting the matrix formed by these $kl$ row vectors to
the columns of the first $k$ vertices in $A_1$ already yields an
invertible $kl\times kl$ matrix, as Gauss
elimination shows.

Since $|A_1\cap A_2|\geq k$ and $|B_1\cap B_2|\geq l$, the graph
$K_{A_1, B_1}$ is $(k,l)$-rigid. Hence
$\dim \V_1^{\perp}=kl+l(|A|-|A_1|)+k(|B|-|B_1|)= |\B_1|$, and we obtain
that $\B_1$ is a basis of $\V_1^{\perp}$. The same reasoning leads
to analogous expressions for bases $\B_2$ and $\B_\cap$ of $\V_2^{\perp}$
and $\V_\cap^{\perp}$, respectively. The result follows since
$\B_1\cup \B_2=\B_\cap$.

In case (ii), we must show that $\V_1\cap\V_2=(0)$.
As a warm-up, if $|A_1\cap A_2|=|B_1\cap B_2|=0$,
then the above description of $\B_1$ and $\B_2$ yields that
$\B_1\cup \B_2$ is a spanning set for $\R^{l|A|+k|B|}$,
and hence completes the proof.

If, say, $|A_1\cap A_2|\leq k$ and $B_1\cap B_2=\emptyset$,
then by definition of $R^{(k,l)}(G)$
\begin{equation} \label{eq:e}
\V_1\cap\V_2\subseteq \Span\{e_{i,a}: i\in[l],\ a\in A_1\cap A_2\}.
\end{equation}
However, since for a fixed $r\in [l]$, the $k$ scalar products
(where $p$ ranges over $[k]$)
$$\langle w^{rp},
\sum_{i\in[l]}\sum_{a\in A_1\cap A_2} \alpha_{ia} e_{i,a}\rangle
=\sum_{a\in A_1\cap A_2} \alpha_{ra}\theta_{pa}$$
vanish simultaneously only if $\alpha_{ra}=0$ for all $a$,
we infer that no nonzero vector from the right-hand side of
eq.~(\ref{eq:e}) is orthogonal to all $w^{rp}$. Thus
$\V_1\cap\V_2=(0)$, as required.
(The case of $|B_1\cap B_2|\leq l$ and $A_1\cap A_2=\emptyset$
is treated similarly.)
\end{proof}

We finish this section with the Cone lemma. This will require the
following definition.

\begin{defi}
Let $G=(A\uplus B,E)$ be a bipartite graph, where
$A=[n]$ and $B=[m']$.
Let $A^*:=A\cup\{0\}$ and $B^*:=B\cup\{0'\}$.
The \emph{left-side cone over $G$}, $C^{L}G$, is the
bipartite graph with the vertex set $A^*\uplus B$ and the edge set
$E\cup\{0b' : b'\in B\}$. The \emph{right-side cone over $G$},
$C^{R}G$, is the bipartite graph with the vertex set $A\uplus B^*$
and the edge set $E\cup\{a0' : a\in A\}$.
\end{defi}

To compute the balanced shifting of $C^{L}G$,
we extend our order $<$ on $V$ to an order  $<_0$ on
$A^*\cup B$ by requiring that $0$ is the smallest vertex.
Similarly, to work with  $C^{R}G$,
we extend $<$ to an order $<_{0'}$ on $A\cup B^*$ by requiring that
$0'$ is the smallest vertex. Note that if $<$ is $(k,l)$-admissible,
then $<_0$ is $(k+1,l)$-admissible and $<_{0'}$
is $(k,l+1)$-admissible.

\begin{lem}[Cone Lemma]\label{lem:bipCone}
The operations of coning and shifting commute, that is,
\[
(C^LG)^{b,<_0}= C^{L}(G^{b,<}) \quad \mbox{and}
\quad (C^{R}G)^{b,<_{0'}}= C^{R}(G^{b,<}).
\]
Thus, $G$ is $(k,l)$-rigid if and only if $C^{L}G$ is
$(k+1,l)$-rigid (equivalently, if and only if
$C^{R}G$ is $(k,l+1)$-rigid), and $G$ is $(k,l)$-stress free
if and only if $C^{L}G$ is $(k+1,l)$-stress free (equivalently,
if and only if $C^{R}G$ is  $(k,l+1)$-stress free).
\end{lem}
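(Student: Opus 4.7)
The plan is to first establish the shift identity $(C^LG)^{b,<_0} = C^L(G^{b,<})$ at the level of edges; the rigidity and stress-free equivalences then fall out by comparing the initial-segment conditions of Definition \ref{comb-rig}. The case of $C^RG$ is completely symmetric (swap the roles of $A$ and $B$), so I focus on $C^LG$. The setup is the vector-space decomposition $\R[C^LG]_{(1,1)} = W \oplus U$, where $W := \R[G]_{(1,1)}$ (via $\R[C^LG]/(x_0)\cong \R[G]$) is spanned by $\{x_iy_{j'} : ij'\in E\}$, and $U := \Span\{x_0y_{j'} : j'\in B\}$ records the cone edges. The $m$ cone edges $\{0j' : j'\in B\}$, which form the $<_0$-lex-initial segment of $\E(K_{A^*,B})$, must all lie in $(C^LG)^{b,<_0}$: the projection of $\theta_0\theta_{j'}$ onto $U$ equals $\theta_{00}\sum_{k'}\theta_{j'k'}x_0y_{k'}$, and invertibility of the generic block $(\theta_{j'k'})_{j',k'\in B}$ makes these $m$ projections a basis of $U$, hence linearly independent in $\R[C^LG]_{(1,1)}$.

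For the remaining edges $ij'$ with $i \in A$, I would pass to the quotient modulo $N := \Span\{\theta_0\theta_{q'} : q' \in B\}$. Writing $\theta_0 = \theta_{00}x_0 + \bar\theta_0$ with $\bar\theta_0 := \sum_{q\in A}\theta_{0q}x_q$, the relation $x_0\theta_{q'} \equiv -\theta_{00}^{-1}\bar\theta_0\theta_{q'}\pmod{N}$ yields
\[
\theta_p\theta_{q'} \;\equiv\; \tilde\theta_p\,\theta_{q'} \pmod{N}, \qquad p\in A,\ q'\in B,
\]
where $\tilde\theta_p := \bar\theta_p - \theta_{p0}\theta_{00}^{-1}\bar\theta_0 \in \R[G]_{(1,0)}$. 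The same $W$-$U$ analysis shows $N \cap W = 0$ and $N + W = \R[C^LG]_{(1,1)}$, so the natural inclusion $W \hookrightarrow \R[C^LG]_{(1,1)}$ induces an isomorphism $W \cong \R[C^LG]_{(1,1)}/N$ identifying $\theta_p\theta_{q'}+N$ with $\tilde\theta_p\theta_{q'}$. Since $(\tilde\theta_p)_{p\in A}$ is a generic ordered family in $\R[G]_{(1,0)}$, the balanced shift of $G$ computed with $(\tilde\theta_p, \theta_{q'})$ equals $G^{b,<}$, by the invariance of the shift under generic reparametrization of $\Theta$. Consequently, for $i\in A$: $ij' \in E((C^LG)^{b,<_0})$ iff $\theta_i\theta_{j'} \notin N + \Span\{\theta_p\theta_{q'} : p\in A,\, pq' <_{\lex} ij'\}$, iff (via the isomorphism) $\tilde\theta_i\theta_{j'} \notin \Span\{\tilde\theta_p\theta_{q'} : p\in A,\, pq' <_{\lex} ij'\}$ in $W$, iff $ij' \in E(G^{b,<})$. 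Together with the first paragraph, $E((C^LG)^{b,<_0}) = \{0j' : j'\in B\} \cup E(G^{b,<}) = E(C^L(G^{b,<}))$.

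With the shift identity established, the rigidity and stress-free statements are routine bookkeeping: since $<_0$ is $(k+1,l)$-admissible, the initial segment of $k+1$ vertices of $A^*$ is $\{0,1,\ldots,k\}$, so the first vertex of $A^*$ outside it is $k+1\in A$. The ``next'' edge used to test $(k+1,l)$-stress-freeness of $C^LG$ therefore corresponds precisely to the edge $(k+1)(l+1)'$ used for $(k,l)$-stress-freeness of $G$, and the edges required for $(k+1,l)$-rigidity of $C^LG$ decompose as the automatic cone edges $\{0j' : j'\in B\}$ together with those needed for $(k,l)$-rigidity of $G$. The main technical obstacle is the invariance claim in the middle step: one must verify that the rational functions $\tilde\theta_{pq} := \theta_{pq} - \theta_{p0}\theta_{00}^{-1}\theta_{0q}$ of the original generic entries still meet all the nonvanishing polynomial conditions that determine the shift. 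This should follow from Zariski openness — the conditions are polynomial inequalities in the entries of $\Theta$, and composing with the specific rational map $\Theta \mapsto (\tilde\theta_{pq})$ preserves genericity — but it is the subtlest point in the argument.
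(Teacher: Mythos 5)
Your proof is correct, and it is in fact much more detailed than what the paper gives: the paper simply cites the symmetric-shifting cone lemma of Babson--Novik--Thomas \cite{Babson-Novik-Thomas-Cone} and says the argument is ``very similar'' while omitting all details, then derives the rigidity/stress-free equivalences from Definition~\ref{comb-rig}. Your argument supplies the missing content and is a legitimate self-contained proof. The decomposition $\R[C^LG]_{(1,1)} = W \oplus U$, the observation that the $U$-projections of $\{\theta_0\theta_{j'}\}$ pick up the invertible generic block $\theta_{00}(\theta_{j't'})_{j',t'}$ (so all cone edges survive the shift), and the substitution $\theta_p\theta_{q'} \equiv \tilde\theta_p\theta_{q'} \pmod N$ with $\tilde\theta_{pq}=\theta_{pq}-\theta_{p0}\theta_{00}^{-1}\theta_{0q}$ the Schur complement are all sound. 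The point you flag as subtle — that replacing $\Theta_A$ by $\tilde\Theta_A$ preserves genericity — is handled correctly by your Zariski-openness remark: fixing the $0$-row and $0$-column of $\Theta_{A^*}$, the map $(\theta_{pq})_{p,q\in A}\mapsto(\tilde\theta_{pq})$ is an affine translation, so the rational map $\GL_{n+1}\dashrightarrow\GL_n$ is dominant and pulls back the dense open locus of ``shift-generic'' matrices to a dense open locus. The final bookkeeping matching $\{0\}\cup\{1,\dots,k\}$ with the first $k+1$ vertices of $A^*$ and hence $(k+1)(l+1)'$-stress-freeness of $C^LG$ with $(k,l)$-stress-freeness of $G$, and similarly $\tilde\E^{(k+1)l}=\{0j':j'\in B\}\cup\E^{kl}$ for rigidity, is also correct. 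This is presumably the same mechanism as in the Babson--Novik--Thomas argument the paper defers to, translated to the $\Z^2$-graded bipartite setting.
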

\begin{proof}
The proof that coning and shifting commute is very similar to that of the cone lemma
for the case of symmetric shifting, see
\cite[Lemma 3.3(4)]{Babson-Novik-Thomas-Cone}. We omit the details.
The second statement then follows from the combinatorial definitions of
``rigid'' and ``stress-free'' in Definition \ref{comb-rig}.
\end{proof}

\section{Bipartite planar graphs}\label{sec:BipPlanarGraphs}
In this section we establish a bipartite analog of the rigidity
criterion for planar graphs.
Recall that according to Proposition \ref{prop:K_5}, for a graph $G$,
the existence of $K_5$ in $G^s$ is an obstruction to the planarity of $G$.
Here we show that for a bipartite $G$, the existence of $K_{3,3}$ in $G^{b,<}$
(where $<$ is a $(2,2)$-admissible order) is also an obstruction to the planarity
of $G$, that is, we prove the following more precise version of
Theorem \ref{thm:planar-1}:

\begin{theo}\label{theo:BipartiteGluck}
If $G$ is a  planar bipartite graph and $<$ is a
$(2,2)$-admissible order, then $K_{3,3}$ is not a subgraph of $G^{b,<}$.
Equivalently, planar bipartite graphs are $(2,2)$-stress free.
\end{theo}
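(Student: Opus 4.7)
The plan is to induct on $n := |V(G)|$. The base case $n \le 5$ is immediate: $G^{b,<}$ has at most $5$ vertices and so cannot contain $K_{3,3}$ as a subgraph, which by Definition \ref{comb-rig} together with the remark closing Section \ref{sec:PrelimBipGraphsBalShift} is equivalent to $(2,2)$-stress-freeness.

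For the inductive step with $n \ge 6$, since $(2,2)$-stress-freeness is inherited by subgraphs---the rigidity matrix $R^{(2,2)}(G)$ is a row-submatrix of $R^{(2,2)}(G')$ whenever $G \subseteq G'$---I may assume $G$ is a maximal planar bipartite graph, i.e., a quadrangulation of the sphere. Indeed, any planar bipartite graph extends to one without adding vertices: in each face of length $2k \ge 6$ at least one bipartite non-boundary diagonal must be missing from $G$, for otherwise the $2k$ face-vertices would span $K_{k,k} \supseteq K_{3,3}$, contradicting planarity; iteratively inserting such a missing diagonal yields a quadrangulation. Hence $|E(G)| = 2n - 4$ and $\sum_v \deg(v) = 4n - 8 < 4n$, so some vertex $v$ has degree at most $3$. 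If $\deg(v) \le 2$, then $G - v$ is planar bipartite on $n-1$ vertices, hence $(2,2)$-stress-free by induction, and the Deletion Lemma (Lemma \ref{lem:bipDeletion}(1) with $k = l = 2$) concludes the case.

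The crux is $\deg(v) = 3$. Writing $N(v) = \{w_1, w_2, w_3\}$, the three $4$-faces around $v$ are $F_i = v\,w_i\,x_i\,w_{i+1}$ (indices mod $3$), with each $x_i$ on the same side as $v$ and $|N(v) \cap N(x_i)| \in \{2, 3\}$. The key claim is that some $x_i$ satisfies $|N(v) \cap N(x_i)| = 2$. Otherwise each $x_i$ is adjacent to all three of $w_1, w_2, w_3$: either two of $x_1, x_2, x_3$ are distinct, giving three vertices (including $v$) each adjacent to $\{w_1, w_2, w_3\}$---a $K_{3,3}$ subgraph contradicting planarity---or $x_1 = x_2 = x_3 =: x$, in which case the induced $K_{2,3}$ on $\{v, x, w_1, w_2, w_3\}$ already exhausts the sphere through the three faces $F_1, F_2, F_3$, forcing $\deg(w_j) = 2$ for each $j$ and contradicting the minimum-degree $3$ assumption of this case. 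Picking such an $x_i$, I contract $v$ with $x_i$: the result $G'$ is bipartite (same-side contraction) and has $n - 1$ vertices; it is planar because $v$ and $x_i$ share the face $F_i$ in the embedding, so they can be merged through $F_i$ without introducing crossings. By induction, $G'$ is $(2,2)$-stress-free, and since $|N(v) \cap N(x_i)| = 2 = l$, the Contraction Lemma (Lemma \ref{lem:bipVertexSpliting}(1)) yields $(2,2)$-stress-freeness of $G$.

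The main obstacle is the degree-$3$ case above: ensuring the existence of a valid contraction partner hinges on combining the local quadrangulation structure around $v$ with the global absence of $K_{3,3}$ imposed by planarity. A secondary but routine point is the topological verification that contracting two same-side vertices sharing a face preserves planarity.
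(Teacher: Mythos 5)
Your proof is correct and shares the same overall architecture as the paper's: extend $G$ to a maximal planar bipartite graph (a quadrangulation of the $2$-sphere), then induct on the number of vertices using the Deletion and Contraction Lemmas. The two differ in the local combinatorial step that produces the contraction pair. The paper's Lemma~\ref{lem:bipartiteContraction} shows that in \emph{any} $4$-face $(v,b,w,c)$ of a quadrangulation on more than $4$ vertices, at least one of the diagonal pairs $\{v,w\}$, $\{b,c\}$ has exactly two common neighbors, via a Jordan-curve argument: if both pairs had a third common neighbor, one of those neighbors would have to lie inside and the other outside the $4$-cycle, so the cycle could not bound a face. You instead extract a vertex $v$ of degree at most $3$ from the edge count $|E|=2n-4$, handle degree $\le 2$ with the Deletion Lemma, and in the degree-$3$ case do a small case analysis on the three quadrilaterals around $v$ (using the absence of $K_{3,3}$) to find a partner $x_i$ with exactly two common neighbors. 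The paper's route is a bit more uniform --- one lemma, no edge count, no Deletion Lemma needed --- while yours makes the planarity bound on $|E|$ do the work explicitly; both are valid.

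Two small points to tighten. In the subcase $x_1=x_2=x_3=x$ you invoke a ``minimum-degree $3$ assumption'' that was never established --- you only picked \emph{some} vertex of degree at most $3$, not assumed every vertex has degree $\ge 3$. Either take $v$ to be a vertex of minimum degree at the outset, or conclude the subcase directly: the three faces $F_1,F_2,F_3$ together with the embedded $K_{2,3}$ on $\{v,x,w_1,w_2,w_3\}$ tile the whole sphere, so $G=K_{2,3}$ and $n=5$, contradicting $n\ge 6$. Also, the reduction to quadrangulations tacitly assumes $|A|,|B|\ge 2$; if one side has at most one vertex, a maximal planar bipartite graph is a star (not a quadrangulation) and in fact no $(2,2)$-admissible order exists. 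This case should be disposed of first, as the paper does in its opening lines.
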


Our proof of Theorem \ref{theo:BipartiteGluck} can be considered as
a bipartite analog of Whiteley's proof \cite{Wh-VertexSplitting}
of Gluck's result. It relies on the lemmas established in the previous
section as well as on some combinatorial properties of bipartite planar graphs.
The first such property is a bipartite
analog of the fact that any maximal planar graph with at
least $3$ vertices partitions the $2$-sphere into triangles;
the second is a bipartite analog of the the fact that
maximal planar graphs on $n$ vertices have $3n-6$ edges.
Both properties are well-known and included here only
for completeness.

A planar bipartite graph is \emph{maximal} if the
addition of any new edge (but no new vertices) results
in a graph that is either non-planar or non-bipartite.

\begin{lem}\label{lem:MaxPlanarBipartite}
If $G=(A\uplus B, E)$ is a maximal planar bipartite graph,
where $|A|,|B|\geq 2$, then  $G$ partitions the $2$-sphere into
$2$-cells whose boundaries are $4$-gons.
\end{lem}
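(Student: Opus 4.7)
My plan is to show that the planar embedding of $G$ partitions $S^2$ as a quadrangulation. The argument has two main steps: (I) establish that $G$ is $2$-connected (so that faces are $2$-cells bounded by simple cycles), and (II) rule out face cycles of length $\geq 6$ by a $K_{3,3}$-obstruction argument.

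For (I), first $G$ is connected: if it were not, the hypothesis $|A|, |B| \geq 2$ guarantees the existence of a bipartite pair of vertices in different components (a case analysis on whether each component contains vertices from both sides or just one), and such an edge can always be drawn planarly by placing the components in disjoint disks, contradicting maximality. Next, suppose $v$ is a cut vertex, say $v \in A$. Consider the planar embedding near $v$: the blocks of $G$ at $v$ appear as contiguous arcs of edges in the cyclic order around $v$, and two consecutive such arcs bound a face $F$ whose boundary walk visits $v$ more than once. Using the assumptions $|A|, |B| \geq 2$ together with the fact that a $2$-connected bipartite graph on $\geq 3$ vertices has at least $2$ vertices on each side of its bipartition, one can locate a bipartite pair $\{a, b\}$ on the boundary walk of $F$ with $a$ and $b$ in different blocks at $v$ (whence $ab \notin E(G)$, since blocks at $v$ share only $v$). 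The edge $ab$ can then be drawn inside $F$, contradicting maximality.

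For (II), with $G$ now $2$-connected and planar, each face is bounded by a simple even cycle of length $\geq 4$. Suppose for contradiction that some face $F$ has boundary cycle $C = v_0 v_1 \cdots v_{2k-1}$ with $k \geq 3$; its vertices alternate between $A$ and $B$, so there are $k$ of each. Consider the $k^2 - 2k$ bipartite ``chord'' pairs $\{v_i, v_j\}$ with $i - j$ odd and $i - j \not\equiv \pm 1 \pmod{2k}$. If every such chord were already an edge of $G$, then $V(C)$ would induce a copy of $K_{k,k}$ in $G$, which for $k \geq 3$ contains $K_{3,3}$ as a subgraph, contradicting planarity. Hence at least one chord $v_i v_j$ satisfies $v_i v_j \notin E(G)$; since no edge of $G$ lies inside the open face $F$, this chord can be drawn as a simple arc inside $F$, producing a larger bipartite planar graph and contradicting maximality. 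Therefore every face cycle has length exactly $4$, and $G$ partitions $S^2$ into $2$-cells bounded by $4$-gons.

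The main obstacle I anticipate is the $2$-connectedness argument in step (I), where identifying the bipartite pair that can be added across blocks requires careful use of the cyclic structure of the embedding at a putative cut vertex together with the bipartite constraint; the chord-addition argument in step (II) is then comparatively clean.
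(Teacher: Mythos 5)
Your proof is correct in spirit but takes a genuinely different and heavier route than the paper. The paper's proof is remarkably compact: it first observes that maximality forces all degrees to be at least $2$, and then, for any $2$-cell whose boundary walk $(a,b,c,d,e,f,\ldots)$ has at least $6$ vertices, it notes that the two ``crossing'' chords $ad$ and $be$ (both bipartite pairs) cannot both be edges of $G$ --- they would have to be drawn outside the cell and would cross there --- so one of them is missing and can be drawn inside, contradicting maximality. No explicit appeal to $2$-connectedness is made, and no $K_{3,3}$ obstruction is invoked.

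Your step (II) replaces this direct crossing argument with a counting argument: if all $k^2-2k$ bipartite chords of a $2k$-cycle face were present you would get $K_{k,k}\supseteq K_{3,3}$. This is valid, but it is more machinery than needed --- the paper only ever needs to look at one specific crossing pair of chords. Your step (I), establishing $2$-connectedness, is the more significant divergence: it is doing genuine work that the paper leaves implicit (the paper's claim that ``each edge is incident with two $2$-cells'' and its choice of six distinct consecutive boundary vertices both tacitly assume the faces are well-behaved). If carried out carefully, your step (I) actually patches a point the paper glosses over. However, as you yourself anticipate, your outline of step (I) has gaps: the claim that one ``can locate a bipartite pair $\{a,b\}$ on the boundary walk of $F$ with $a$ and $b$ in different blocks at $v$'' needs justification. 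The key observation you are missing is that after ruling out vertices of degree $\leq 1$, no block at a cut vertex $v$ can be a bridge; hence the boundary walk's excursion into each block incident to $v$ along $\partial F$ has length at least two and therefore contains a vertex from the side of the bipartition opposite to one of the $B$-vertices adjacent to $v$ in the other block, yielding the desired non-edge pair. With that filled in, your argument works, but overall the paper's proof gets by with much less.
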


\begin{proof}
Consider a planar drawing of $G$. If $G$ has a vertex of degree 0 or 1,
then $G$ is not maximal.
Thus we can assume that all degrees are at least $2$, and
hence that each edge is incident with two $2$-cells of the 2-sphere.
If one of the cells is not a $4$-gon, it has at least 6 vertices,
say, $(a,b,c,d,e,f, \ldots)$ in this order along its boundary.
By planarity of the drawing, not both $ad$ and $be$ are edges of $G$
(drawn outside of this $2$-cell). Since adding such a missing edge
and drawing it inside this $2$-cell preserves bipartiteness and planarity,
it follows that $G$ is not maximal.
\end{proof}

\begin{lem}\label{lem:2n-4}
If $G=(A\uplus B, E)$ is a maximal planar bipartite graph on $N$ vertices,
where $|A|,|B|\geq 2$, then $G$ has $2N-4$ edges.
Thus, if a planar bipartite $G$ with $N\geq 4$
vertices has  $2N-4$ edges, then $G$ is maximal.
\end{lem}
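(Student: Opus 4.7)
The plan is a direct application of Euler's formula, using Lemma \ref{lem:MaxPlanarBipartite} for the first statement and the standard upper bound for bipartite planar graphs for the converse.

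First I would verify that a maximal planar bipartite graph $G$ with $|A|,|B|\geq 2$ is connected. Indeed, if $G$ had two components $C_1, C_2$, one could pick $a\in A\cap V(C_1)$ and $b\in B\cap V(C_2)$ (using $|A|,|B|\geq 2$ to ensure such a choice exists across components, possibly after swapping roles of $C_1, C_2$), draw $ab$ through the unbounded region of a planar embedding of $C_1$, and obtain a larger planar bipartite graph, contradicting maximality. So Euler's formula $V-E+F=2$ applies on the $2$-sphere.

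Next I would invoke Lemma \ref{lem:MaxPlanarBipartite}, which says every $2$-cell of the embedding is a $4$-gon. Since each face is bounded by $4$ edges and each edge is incident to exactly $2$ faces, the double-counting identity $4F=2E$ gives $F=E/2$. Substituting into Euler's formula with $V=N$ yields
\[
N - E + \tfrac{E}{2} = 2,
\]
so $E = 2N-4$, proving the first assertion.

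For the second assertion, I would invoke the standard upper bound: any planar bipartite graph $H$ with $N\geq 3$ vertices satisfies $|E(H)|\leq 2N-4$. This follows from the same Euler count, because in a bipartite graph every closed walk has even length, so every face of a planar embedding is bounded by at least $4$ edges, giving $4F\leq 2|E(H)|$ and hence $|E(H)|\leq 2N-4$ (one can reduce to the connected case component by component, which only weakens the bound). Now assume $G$ is a planar bipartite graph with $N\geq 4$ vertices and $2N-4$ edges; if $G$ were not maximal, we could add an edge to produce a planar bipartite graph with $2N-3$ edges on $N$ vertices, contradicting this bound. Hence $G$ is maximal.

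There is no real obstacle here — the only point requiring a bit of care is the connectivity argument in the first part, which is needed to legitimately use Euler's formula with $F$ interpreted as the number of $2$-cells of the sphere.
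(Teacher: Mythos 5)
Your proof is correct and follows the same route as the paper: combine Lemma~\ref{lem:MaxPlanarBipartite} with Euler's formula and the double-count $4F=2E$ to get the first assertion, and use the standard bipartite planar bound $|E|\leq 2N-4$ for the converse. You fill in two details the paper leaves implicit (connectivity of a maximal planar bipartite graph, and the explicit derivation of the converse), but the underlying argument is the same.
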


\begin{proof}
Let $e$ and $c$ be the number of edges and $2$-cells of $G$,
respectively. Each $2$-cell has $4$ edges and each edge is
contained in two $2$-cells. Thus $c=e/2$. By the Euler
formula $N+c-e=2$, and so $e=2N-4$.
\end{proof}

The following result will allow us to invoke the Contraction Lemma.

\begin{lem}\label{lem:bipartiteContraction}
Let $G$ be a maximal planar bipartite graph on $N$ vertices, $N>4$.
Then every $2$-cell induced by a planar drawing of $G$
has a pair of opposite vertices with exactly two
common neighbors, namely, the other two vertices on the
boundary of this $2$-cell. Assume $v,w$ form such a pair.
Then the graph $G'$ obtained from $G$ by contracting  $v$ with $w$
is a maximal planar bipartite graph on $N-1$ vertices.
\end{lem}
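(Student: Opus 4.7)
The plan is to split the proof into two parts matching the two assertions of the lemma: (A) every 2-cell of the fixed planar drawing of $G$ has some diagonal pair of boundary vertices whose only common neighbors are the remaining two boundary vertices; and (B) contracting any such pair yields a maximal planar bipartite graph on $N-1$ vertices.

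For (A), fix a 2-cell bounded by a 4-gon on vertices $a,b,c,d$ in cyclic order (possible by Lemma \ref{lem:MaxPlanarBipartite}), with $a,c$ on one side of the bipartition and $b,d$ on the other. The pairs $\{a,c\}$ and $\{b,d\}$ automatically contain $\{b,d\}$ and $\{a,c\}$, respectively, among their common neighbors, so the claim fails for $\{a,c\}$ precisely when some $x\notin\{b,d\}$ is a common neighbor, and analogously for $\{b,d\}$ with some $y\notin\{a,c\}$. I would argue by contradiction that both pairs cannot fail simultaneously. Since the interior of the 2-cell contains no vertex, both $x$ and $y$ lie in the complementary open disk $D$ whose boundary is the same 4-gon, and the edges $xa,xc,yb,yd$ all lie in the closure of $D$. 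The simple arc from $a$ to $c$ passing through $x$ (two graph edges) separates $D$ into two Jordan sub-disks, one containing $b$ and the other $d$ on its boundary. The vertex $y$ lies in exactly one of these sub-disks, hence is separated from one of $b,d$ by the arc through $x$, so the corresponding edge ($yb$ or $yd$) would have to cross either $xa$ or $xc$ in the planar drawing. Because $x\neq y$ (they lie on opposite sides of the bipartition) and the six vertices $a,b,c,d,x,y$ are pairwise distinct, no two of these edges share an endpoint, and the crossing contradicts planarity.

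For (B), let $v,w$ be the chosen pair with common neighbors exactly $\{b,d\}$, and let $G'=G/\{v,w\}$. The graph $G'$ is bipartite because $v,w$ lie on the same side; it is simple because contraction only duplicates the edges to $b$ and $d$, and these duplicates are removed by definition of contraction in this paper; and it is planar because the identification can be realized in the given planar drawing by sliding $w$ onto $v$ along an arc through the interior of the 4-gon, which contains neither vertex nor edge. Counting edges gives $|E(G')|=|E(G)|-2=(2N-4)-2=2(N-1)-4$, using Lemma \ref{lem:2n-4} for $|E(G)|=2N-4$. Since $N-1\geq 4$, the converse direction of Lemma \ref{lem:2n-4} then yields that $G'$ is maximal planar bipartite.

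The main obstacle is the topological argument in (A). I need the simple arc from $a$ to $c$ through $x$ to genuinely separate the disk $D$ into two Jordan sub-regions, each containing exactly one of $b,d$ on its boundary; and I need to exclude the degenerate case of $y$ lying on this arc, which is ruled out since $y$ is a vertex in the bipartition class opposite to $x$. Once this Jordan-type separation is pinned down, the rest of the argument is a straightforward bookkeeping calculation combined with a direct appeal to Lemma \ref{lem:2n-4}.
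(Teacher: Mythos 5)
Your proof is correct and follows essentially the same route as the paper's: part (A) is the same Jordan-curve argument (the paper phrases it as ``one of the two extra common neighbors must lie inside the 4-cycle and the other outside, so the cycle cannot bound a 2-cell'', which is the contrapositive of your crossing argument), and part (B) is the same combination of a planarity-preserving contraction of $v$ with $w$ across the interior of the 4-gon together with the edge count $|E(G')|=2(N-1)-4$ and the converse direction of Lemma~\ref{lem:2n-4}. The only cosmetic difference is that the paper realizes the contraction by deleting $v$, merging the surrounding cells into one cell $X$, and redrawing the former $v$-edges from $w$ inside $X$, whereas you slide $w$ onto $v$ (equivalently, insert a virtual edge $vw$ in the 4-gon and contract it); both are standard and equivalent.
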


\begin{proof}
To prove the first assertion, note that if $(v,b,w,c)$
is a 4-cycle in a planar drawing of $G$ such that
 $v,w$ have another common neighbor $a$, and $b,c$
have another common neighbor $x$, then exactly one of
$a,x$ is inside the cycle and the other outside. In
particular, $(v,b,w,c)$ does not bound a 2-cell.

Let $v,w$ be a pair guaranteed by the first part.
Deleting $v$ from the drawing of $G$ creates
one new $2$-cell $X$, with boundary cycle
$(w,c,x_1,y_1,x_2,...,y_{k-1},x_k,d,w),$
where $c,d$ are the common neighbors of $v,w$ in $G$.
To obtain a planar drawing of $G'$, draw the edges
$wy_i$ (replacing the edges $vy_i$) inside the
cell $X$ according to this order. The graph $G'$ has one
vertex and two edges fewer than $G$
(indeed, the vertex $v$ and the edges $vc$ and $wd$ are ``lost''),
and so by Lemma \ref{lem:2n-4}, $G'$ is maximal.
\end{proof}

We are now in a position to prove Theorem \ref{theo:BipartiteGluck}.
\begin{proof}
Let $G=(V,E)$ be as in the theorem.
Since $G^b$ is bipartite with the same vertices as $G$ on each side,
it follows that if $G$ has a side with at most one vertex, then
$K_{3,3}\not\subseteq G^{b,<}$ for any order $<$. Thus
assume that $G$ has at least $2$ vertices on each side.

It is easy to see from the definition of balanced shifting that
if $H=(A\uplus B, E_H)$ is a subgraph of $G=(A\uplus B, E)$,
then $H^{b,<}$ is a subgraph of $G^{b,<}$.
(This follows from the fact that the Stanley-Reisner ideals of
$H$ and $G$ satisfy $I_H\supseteq I_G$.)
Hence, we assume without loss of generality that $G$ is
maximal. We prove by induction on the size of $V$ that such $G$ is
$(2,2)$-stress free (and also $(2,2)$-rigid). The base case, namely
$|V|=4$, does hold as in this case $G=G^b=C_4$. Thus assume $|V|>4$,
and consider vertices $v,w$ of $G$ as in
Lemma \ref{lem:bipartiteContraction}. Let $G'$ be the graph
obtained by contracting $v$ with $w$.
Lemma \ref{lem:bipartiteContraction}, the induction hypothesis,
and the Contraction Lemma (Lemma \ref{lem:bipVertexSpliting})
complete the proof.
\end{proof}

In view of Proposition \ref{prop:K_5},
a remaining natural problem is to find a notion of a minor for
bipartite graphs, denoted $<_b$, for which $K_{3,3}<_b G$
 would imply that $G$ is not planar, and $K_{3,3}\subseteq G^b$ would
imply that $K_{3,3}<_b G$. In a separate paper joint with
Chudnovsky and Seymour \cite{Chud.Kal.Nev.Nov.Sey.-BipMinors},
we propose such a notion of minors, $<_b$, and prove a bipartite analog of
Wagner's planarity criterion:
\emph{A bipartite graph $G$ is planar if and only if
$K_{3,3} \nless_b G$.}

A notion closely related to planarity of graphs is that of
\emph{linkless embeddability} of graphs in $\R^3$.
A graph $G$ is called {\em linklessly embeddable} if there
is an embedding of $G$ into $\R^3$ in such a way that every
two cycles of $G$ have zero linking number.
(A subfamily of linklessly embeddable graphs is that of {\em apex graphs}:
graphs that can be made planar by the removal of a single vertex.)
It is a theorem of Sachs \cite{Sachs83} that $K_{4,4}$ minus an edge,
which we denote by $K_{4,4}^-$,
is not linklessly embeddable. This fact and Theorem \ref{theo:BipartiteGluck}
lead us to the following conjecture.

\begin{conj}\label{conj:bipLinklessGraph}
Let $G=(A\uplus B, E)$ be a bipartite linklessly embeddable graph
and  $<$ a $(3,3)$-admissible order. If $|A|,|B|\geq 4$
then $K_{4,4}^-$ is not a subgraph of $G^b$, and thus
$E\leq 3|V(G)|-10$.
In particular, all bipartite linklessly embeddable graphs are $(3,3)$-stress free;
hence if $A$ and $B$ are each of size at least 3, then $E\leq 3|V(G)|-9$.
\end{conj}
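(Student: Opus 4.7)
The plan is to adapt the proof strategy of Theorem \ref{theo:BipartiteGluck} to the $(3,3)$-setting. Since $G\subseteq H$ implies $G^b\subseteq H^b$ (from the reverse containment of their Stanley--Reisner ideals), it suffices to treat the case where $G$ is edge-maximal among bipartite linklessly embeddable (LKE) graphs with fixed sides $A$ and $B$. By Sachs' theorem such $G$ already excludes $K_{4,4}^-$ as a subgraph; the task is to promote this property to the shift $G^b$. The argument would then proceed by induction on $|V(G)|$, the base case being a small graph in which the claim can be verified directly via Lemma \ref{mapPhi}.

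The key input required is a structural lemma for maximal LKE bipartite graphs, analogous to Lemma \ref{lem:bipartiteContraction}: for $|V(G)|$ beyond the base case, there should exist either (i) a vertex $v\in A$ with $\deg(v)\leq 3$ (or symmetrically, a vertex in $B$), or (ii) two vertices $v,w$ on the same side of $G$ with at most $3$ common neighbors such that the contraction $G'$ obtained by identifying $v$ with $w$ is again bipartite and LKE. Granting such a lemma, the Deletion Lemma (Lemma \ref{lem:bipDeletion}, case (i)) or the Contraction Lemma (Lemma \ref{lem:bipVertexSpliting}, case (ii)) --- applied with $k=l=3$ --- propagates $(3,3)$-stress freeness from $G'$ to $G$. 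For the sharper conclusion $K_{4,4}^-\not\subseteq G^b$, which by the shiftedness of $G^b$ is equivalent to $34'\notin G^b$ \emph{and} $43'\notin G^b$, one would need a finer variant of the contraction step that controls both of these lexicographically later edges simultaneously. This could be pursued by applying Lemma \ref{mapPhi} with the auxiliary parameters $(k,l) = (2,3)$ and $(3,2)$ and tracking the left kernel dimensions through the identification. The edge bounds $|E|\leq 3|V(G)|-10$ and $|E|\leq 3|V(G)|-9$ follow from $|E(G)|=|E(G^b)|$ together with Proposition \ref{rigid/stressfree-criterion}, by simply counting missing shifted edges.

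The principal obstacle is the structural step. In the planar case the $4$-gon decomposition of the sphere guaranteed by Lemma \ref{lem:MaxPlanarBipartite} produced (ii) almost immediately; no analogous combinatorial decomposition is available for maximal LKE graphs in $\mathbb{R}^3$. A promising geometric route is via the Robertson--Seymour--Thomas theorem on flat embeddings of LKE graphs, under which every cycle of $G$ bounds a disk disjoint from the rest of the embedded graph. Two same-side vertices lying on a short cycle that bounds an empty disk are natural candidates for the contraction pair, and bipartiteness forces such cycles to have length at least $4$. Making this geometric intuition precise --- in particular, verifying that the identification preserves the flat embedding (hence LKE) and yields only at most $3$ common neighbors --- is where the essential difficulty lies, and likely requires genuinely new ideas beyond the toolkit assembled in Section 3. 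An alternative, more combinatorial route would be to argue contrapositively: if $K_{4,4}^- \subseteq G^b$, then enough ``shifted'' edges survive in $G^b$ to force $G$ itself to contain a member of the Petersen family as a minor; we have no direct mechanism for such a conclusion at present.
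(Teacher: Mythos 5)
This statement is a \emph{conjecture}, not a theorem: the paper explicitly records that ``at present, even the inequality $E\leq 3|V(G)|-9$ of the above conjecture is open.'' So there is no proof of record to compare against. What you have written is an honest roadmap, and its self-assessment is accurate: the strategy of Theorem~\ref{theo:BipartiteGluck} cannot be transported without a replacement for Lemma~\ref{lem:bipartiteContraction}, and no structural decomposition of maximal linklessly embeddable bipartite graphs analogous to the quadrangulation lemma is currently known. The paper itself offers only the far weaker Lemma~4.7 ($(7,7)$-stress freeness via Mader's edge bound and the Deletion Lemma), precisely because the needed contraction step is unavailable. Your identification of this as the essential obstacle, and your candidate routes (flat embeddings via Robertson--Seymour--Thomas, or a contrapositive Petersen-family argument), are reasonable speculation and go somewhat beyond what the paper discusses.

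One concrete slip worth fixing: since $G^b$ is balanced-shifted, $K_{4,4}^-\not\subseteq G^b$ is equivalent to $G^b$ restricted to $[4]\times[4']$ having at most $14$ edges, which (using shiftedness, so that $34'\notin G^b$ already forces $44'\notin G^b$) is equivalent to ``$34'\notin G^b$ \emph{or} $43'\notin G^b$,'' not ``and.'' The disjunctive form is weaker and is what you would actually need to establish; requiring both would be the stronger assertion that $G^b$ omits $K_{3,4}$ and $K_{4,3}$, i.e.~that $G$ is both $(2,3)$- and $(3,2)$-stress free, which does not follow from the conjecture and is a genuinely different claim.
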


At present, even the inequality $E\leq 3|V(G)|-9$ of the
above conjecture is open.
(Equality $|E|=3|V|-9$ holds for complete bipartite graphs $K_{3,m}$; these
graphs are apex graphs, and hence linklessly embeddable.)
As for the rest of the conjecture, the following weaker statement
is easy to prove.

\begin{lem}
Let $G$ be a linklessly embeddable bipartite graph. Then $G$ is
$(7,7)$-stress free, and so $G^{b,<}$ does not contain $K_{8,8}$,
where $<$ is any $(7,7)$-admissible order.
\end{lem}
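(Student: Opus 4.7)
The plan is to prove the lemma by induction on $|V(G)|$, using the Deletion Lemma (Lemma \ref{lem:bipDeletion}) together with the Mader-type edge bound for linklessly embeddable graphs.

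First I would dispose of the base case: if one of the sides, say $A$, satisfies $|A|\le 7$, then $G$ is a subgraph (on the same vertex set) of the complete bipartite graph $K_{A,B}$. Since $(K_{A,B})^{b}=K_{A,B}$ and this graph does not contain the edge $(8,8')$ (because $|A|<8$), $K_{A,B}$ is $(7,7)$-stress free. Because passing to a subgraph on the same vertex sets only shrinks the Stanley--Reisner ring's $(1,1)$-component, $G$ is $(7,7)$-stress free as well (equivalently, the rows of $R^{(7,7)}(G)$ are a subset of the rows of $R^{(7,7)}(K_{A,B})$, and the latter are independent by Proposition \ref{rigid/stressfree-criterion}).

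For the inductive step, assume $|A|,|B|\ge 8$. The key observation is that a linklessly embeddable graph contains no $K_6$-minor, and by Mader's theorem such a graph on $N$ vertices has at most $4N-10$ edges. Therefore the average degree of $G$ is strictly less than $8$, so $G$ has some vertex $v$ of degree $\le 7$. The graph $G-v$ is still bipartite and still linklessly embeddable (both properties being preserved under vertex deletion), so by induction $G-v$ is $(7,7)$-stress free. Since $\deg(v)\le 7=k=l$, the hypothesis of Lemma \ref{lem:bipDeletion}(1) is satisfied regardless of whether $v\in A$ or $v\in B$, and we conclude that $G$ is $(7,7)$-stress free.

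Finally, the statement about $K_{8,8}$ follows from the remark at the end of Section \ref{sec:PrelimBipGraphsBalShift}: since $G^{b,<}$ is balanced-shifted, the edge $(8,8')$ fails to appear in $G^{b,<}$ if and only if $K_{8,8}$ is not a subgraph of $G^{b,<}$. No step is really hard here; the only ``outside'' input is Mader's bound $|E|\le 4N-10$ for $K_6$-minor-free graphs, and everything else is a direct application of the tools already developed in Section \ref{sec:(k,l)-rigidity}.
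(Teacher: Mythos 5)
Your proof is correct and follows the same strategy as the paper: the key input is that linklessly embeddable graphs have no $K_6$-minor, so by Mader's bound the average degree is below $8$, giving a vertex of degree at most $7$ to delete via the Deletion Lemma. The only difference is that you spell out the base case explicitly (via the rigidity matrix of $K_{A,B}$ when a side has at most $7$ vertices), whereas the paper leaves the termination of the induction implicit; this is a minor but welcome clarification, not a different route.
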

\begin{proof}
Note that $G$ does not contain $K_6$ as a minor; this is an easy part of the
forbidden minor characterization of linklessly embeddable graphs
\cite{RST-linkless}. Thus, by a result of  Mader \cite{Mader-K6free},
if $G$ has $N$ vertices then $G$ has fewer than $4N$ edges. Hence
there is a vertex $v$ in $G$ whose degree is at most $7$. Now we use
the Deletion Lemma (Lemma \ref{lem:bipDeletion}) and induction to
conclude that $G-v$ is $(7,7)$-stress free, and hence so is $G$.
\end{proof}

Some other remarkable phenomena from graph rigidity
theory can be considered in the context of  bipartite rigidity.
First, recall that Gluck's proof of the generic rigidity of
maximal planar graphs is based on the theorems of Steinitz
and Dehn--Alexandrov. Steinitz's theorem asserts that every
maximal planar graph is the graph of some 3-dimensional simplicial polytope,
while the Dehn--Alexandrov theorem posits that the graph of any
$3$-dimensional simplicial polytope is infinitesimally rigid.
We do not know if the results of Dehn and Alexandrov have bipartite analogs.
Second, the non-generic embeddings of maximal planar triangulations for
which infinitesimal rigidity fails are also quite fascinating (e.g.,
in view of Bricard's Octahedra and Connelly's flexible
spheres \cite{Connelly-counterexample}). The analogous
situation for infinitesimal $(2,2)$-rigidity of bipartite planar
quadrangulations is also very interesting.

\section{Laman-type results for bipartite graphs}\label{sec:Laman}
In this section we apply the theory developed so far to bipartite trees
and outerplanar graphs.
We also consider a bipartite analog of the Laman theorem.
This theorem, see \cite{Laman}, provides a combinatorial characterization of
minimal (with respect to deletion of edges) generically $2$-rigid graphs:

\begin{theo}[Laman]\label{thm:classicLaman}
A graph $G=(V,E)$ is minimal generically $2$-rigid if and only if the
following conditions hold:
\begin{itemize}
\item[(i)] $|E|=2|V|-3$, and
\item[(ii)] every induced subgraph $G[V']=(V',E')$ with $|V'|\geq 2$
satisfies  $|E'|\leq 2|V'|-3$.
\end{itemize}
\end{theo}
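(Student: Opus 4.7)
I would use the classical $2$-rigidity matrix $R(G)$ of size $|E|\times 2|V|$, built from a generic embedding $p\colon V\to\R^2$. The $3$-dimensional space of infinitesimal rigid motions of the plane lies in $\ker R(G)$, so $\rank R(G)\leq 2|V|-3$, with equality exactly when $G$ is generically $2$-rigid. Minimality (stress-freeness) means the rows of $R(G)$ are linearly independent, so combining these facts yields $|E|=\rank R(G)=2|V|-3$, proving (i). For (ii), the rows of $R(G)$ indexed by $E(G[V'])$ form, after deleting the identically-zero columns, the rigidity matrix $R(G[V'])$; they inherit linear independence from those of $R(G)$, and the same rank bound applied on $V'$ yields $|E(G[V'])|\leq 2|V'|-3$.

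\textbf{Sufficiency.} I would argue by induction on $|V|$, with base case $|V|=2$. Condition (i) gives $\sum_v\deg(v)=4|V|-6$, so some vertex $v$ has $\deg(v)\leq 3$, while (ii) applied to $V\setminus\{v\}$ gives $|E|-\deg(v)\leq 2(|V|-1)-3$, forcing $\deg(v)\geq 2$; thus $\deg(v)\in\{2,3\}$. If $\deg(v)=2$, then $G-v$ inherits (i) and (ii) and is minimally $2$-rigid by induction; re-adjoining $v$ via a $0$-extension (Henneberg move I) preserves minimal $2$-rigidity by the classical non-bipartite analog of Lemma \ref{lem:bipDeletion}. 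If $\deg(v)=3$ with neighbors $a,b,c$, I would reduce via a $1$-extension: choose a non-edge among $\{ab,ac,bc\}$ whose addition to $G-v$ preserves condition (ii), obtaining a graph $G'$ with $2(|V|-1)-3$ edges; induction applied to $G'$ together with Henneberg's classical theorem that $1$-extensions preserve generic $2$-rigidity and stress-freeness would complete the inductive step.

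\textbf{Main obstacle.} The key difficulty is showing that a valid $1$-extension reduction always exists. If all three pairs $\{ab,ac,bc\}$ are edges of $G$, then $\{v,a,b,c\}$ induces $K_4$, violating $6\leq 2\cdot 4-3$; so at least one pair is a non-edge. If adding a non-edge, say $ab$, to $G-v$ violates (ii), then there is a \emph{tight} induced subgraph $H_{ab}\subseteq G-v$ with $\{a,b\}\subseteq V(H_{ab})$ and $|E(H_{ab})|=2|V(H_{ab})|-3$. The critical combinatorial ingredient is a submodular-type closure: whenever $H=G[V_1]$ and $H'=G[V_2]$ are both tight and $V_1\cap V_2\neq\emptyset$, condition (ii) forces $G[V_1\cup V_2]$ to be tight as well. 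Assuming for contradiction that every admissible non-edge pair is blocked by such a tight subgraph, I would combine two such blockers (or combine one blocker with an existing edge among $\{a,b,c\}$, or use a single blocker together with the remaining neighbor as needed in a short case analysis) to obtain a tight induced subgraph $H$ of $G-v$ containing all three of $a,b,c$. Adjoining $v$ then produces the induced subgraph $G[V(H)\cup\{v\}]$ with at least $(2|V(H)|-3)+3=2|V(H)|$ edges, exceeding the (ii) bound of $2(|V(H)|+1)-3=2|V(H)|-1$, a contradiction. This submodularity argument is the only genuinely combinatorial step; everything else reduces to induction and the standard Henneberg-move preservation lemmas.
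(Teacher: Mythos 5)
The paper states Theorem \ref{thm:classicLaman} as a classical result of Laman \cite{Laman} and gives no proof of its own, so there is no in-paper argument to compare yours against; I will assess the sketch on its merits. The necessity direction is fine, and your sufficiency outline — induction with Henneberg $0$- and $1$-extensions, and showing that a degree-$3$ vertex always admits a valid $1$-extension reduction — is the standard route.

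However, the ``submodular-type closure'' you state is false: two tight induced subgraphs whose vertex sets meet in a \emph{single} vertex need not have a tight union. Two triangles sharing one vertex give $6$ edges on $5$ vertices while $2\cdot 5-3=7$; this graph even satisfies your condition (ii), so it is a legitimate Laman-sparse counterexample. The merge lemma holds only when $|V_1\cap V_2|\geq 2$, because only then does applying (ii) to the intersection make the count close. The singleton-intersection configurations that can actually arise among blockers $H_{ab},H_{ac},H_{bc}$ are handled not by that lemma but by a separate direct edge count that happens to land exactly on $2|U|-3$: e.g., three blockers meeting pairwise in $\{a\},\{b\},\{c\}$ give $\sum(2n_i-3)=2|U|-3$ edges on their union $U$, and two blockers meeting only in $a$ together with the extra edge $bc$ give the same; in each case adjoining $v$ with its three edges then overcounts. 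This exact counting is the combinatorial heart of Laman's proof. Since you rest the ``short case analysis'' on a lemma that does not hold and do not supply the counting that replaces it, the sufficiency direction has a genuine gap as written, even though the intended argument can be completed.
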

Inspired by this result, we consider the relation between
$(k,l)$-rigidity and the following combinatorial condition, analogous to the
above Laman condition.
\begin{defi}\label{def:Laman}
A bipartite graph $G=(A\biguplus B,E)$ with $|A|\geq k$ and
$|B|\geq l$ is called \emph{$(k,l)$-Laman} if
\begin{itemize}
\item[(i)] $|E|=l|A|+k|B|-kl$, and
\item[(ii)] every induced subgraph $G[V']=(A'\biguplus B',E')$ of $G$
with $|A'|\geq k$ and $|B'|\geq l$ has at most $l|A'|+k|B'|-kl$ edges.
\end{itemize}
\end{defi}

We say that a $(k,l)$-rigid graph $G$ is
\emph{$(k,l)$-minimal} if the deletion of an arbitrary edge of $G$
results in a graph that is not $(k,l)$-rigid.
By Proposition \ref{rigid/stressfree-criterion}, $G$ is $(k,l)$-minimal if and only
if $G$ is $(k,l)$-rigid and stress free.
The following implication holds:

\begin{prop}\label{prop:Laman--necessary}
If a graph $G$ is $(k,l)$-minimal then $G$ is $(k,l)$-Laman.
\end{prop}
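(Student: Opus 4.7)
The plan is to extract both Laman conditions directly from Proposition \ref{rigid/stressfree-criterion} by examining the rigidity matrix $R^{(k,l)}(G)$ and its natural submatrices indexed by induced subgraphs. Throughout, fix a $(k,l)$-minimal bipartite graph $G=(A\uplus B,E)$; by definition it is both $(k,l)$-rigid and $(k,l)$-stress free.

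Condition (i) is essentially a rank count. Proposition \ref{rigid/stressfree-criterion} says stress-freeness amounts to the $|E|$ rows of $R^{(k,l)}(G)$ being linearly independent, while rigidity amounts to $\rank R^{(k,l)}(G)=l|A|+k|B|-kl$. Equating these two computations of the rank yields $|E|=l|A|+k|B|-kl$. In the same breath this forces $|A|\ge k$ and $|B|\ge l$: otherwise $(|A|-k)(|B|-l)<0$ (as the other sign contradicts $|E|\le |A|\cdot|B|$), but one can rule out the degenerate case $|A|<k$, $|B|<l$ by comparing $|E|$ with $|A|\cdot |B|$. Thus the ambient hypothesis in Definition \ref{def:Laman} is automatic.

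For condition (ii), fix any induced subgraph $H:=G[V']=(A'\uplus B',E')$ with $|A'|\ge k$ and $|B'|\ge l$. The central observation is that, for each edge $ab'\in E'$, the corresponding row of $R^{(k,l)}(G)$ is supported only on the column-blocks indexed by $a$ and $b'$, both of which lie in $V'$. Deleting the column-blocks indexed by $V\setminus V'$ therefore does not affect linear (in)dependence, and the resulting $|E'|\times(l|A'|+k|B'|)$ matrix is a specialization of the generic matrix $R^{(k,l)}(H)$ obtained by restricting the generic entries of $\Theta$ to those involving vertices of $V'$. Since a subfamily of algebraically independent parameters is still algebraically independent, this specialization attains the generic rank of $R^{(k,l)}(H)$. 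Combining with stress-freeness of $G$ (which gives independence of those $|E'|$ rows), we conclude
\[
|E'|=\rank R^{(k,l)}(H).
\]
Applying Remark \ref{rank(K_(n,m))} to $K_{A',B'}$ (legitimate because $|A'|\ge k$ and $|B'|\ge l$) gives $\rank R^{(k,l)}(K_{A',B'})=l|A'|+k|B'|-kl$, and since $H\subseteq K_{A',B'}$ implies $\row R^{(k,l)}(H)\subseteq \row R^{(k,l)}(K_{A',B'})$, we obtain $|E'|\le l|A'|+k|B'|-kl$, which is condition (ii).

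The only point requiring care — which I would not call a true obstacle — is the identification of the row-restriction of $R^{(k,l)}(G)$ to $E'$ (after dropping irrelevant zero column-blocks) with a generic instance of $R^{(k,l)}(H)$. This is the standard upper-semicontinuity argument for generic rank, combined with the observation above that the entries involved still form an algebraically independent set. Everything else is a one-line consequence of Proposition \ref{rigid/stressfree-criterion} and Remark \ref{rank(K_(n,m))}.
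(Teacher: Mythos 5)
Your proof is correct and takes essentially the same route as the paper: condition (i) is the same rank count from Proposition \ref{rigid/stressfree-criterion}, and your direct argument for condition (ii) (stress-freeness forces the rows of $R^{(k,l)}(G)$ indexed by $E'$ and supported on the columns of $V'$ to be independent, hence $|E'|=\rank R^{(k,l)}(G[V'])\le\rank R^{(k,l)}(K_{A',B'})=l|A'|+k|B'|-kl$ via Remark \ref{rank(K_(n,m))}) is simply the contrapositive of the paper's one-line deduction. Two minor streamlinings are available: the hypotheses $|A|\ge k$, $|B|\ge l$ are already built into the definitions of $(k,l)$-admissibility and $(k,l)$-rigidity, so your edge-count digression is unnecessary (and, as stated, doesn't actually exclude the boundary case $|A|<k$ with $|B|=l$); and the submatrix of $R^{(k,l)}(G)$ you isolate is \emph{literally} $R^{(k,l)}(G[V'])$ computed with the inherited generic $\Theta$, so no specialization/upper-semicontinuity argument is needed.
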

\begin{proof}
As $G$ is $(k,l)$-minimal, $\rank(R^{(k,l)}(G))=l|A|+k|B|-kl=|E|$,
where the second equality holds by the minimality of $G$.
Hence condition (i) of Definition \ref{def:Laman} is satisfied.
If condition (ii) is violated for some induced subgraph $G[V']$,
then the rows of $R^{(k,l)}(G[V'])$ are linearly dependent,
also when viewed as rows of $R^{(k,l)}(G)$, contradicting the
fact that $G$ is $(k,l)$-minimal, and hence, in particular,
$(k,l)$-stress free.
\end{proof}

What about the converse statement?
It follows from Whiteley's paper \cite{Whiteley-scenes} that the converse
does hold if one of $k,l$ is equal to $1$:

\begin{theo}\label{thm:(l,1)-Laman}
For any $k\geq 1$, if a graph $G$ is $(k,1)$-Laman
then $G$ is $(k,1)$-minimal.
\end{theo}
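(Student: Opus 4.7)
The plan is to identify the bipartite $(k,1)$-rigidity matroid with Whiteley's scene-analysis matroid from \cite{Whiteley-scenes} and apply his Laman-type characterization of its independent sets. First I would unpack the rigidity matrix $R^{(k,1)}(G)$: for each edge $ab'\in E$ the corresponding row has the generic scalar $\theta_{1'b'}$ in the one-dimensional column-block of $a$ and the generic $k$-tuple $(\theta_{1a},\ldots,\theta_{ka})$ in the $k$-dimensional column-block of $b$, with zeros elsewhere. Reading each $b\in B$ as a generic point $p_b=(\theta_{1b},\ldots,\theta_{kb})\in\R^k$ and each $a\in A$ as a generic scalar $\alpha_a=\theta_{1'a}\in\R$, this is precisely the incidence matrix of a generic point--hyperplane scene in $\R^k$, whose row matroid is the one studied by Whiteley in \cite{Whiteley-scenes}.

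Next I would invoke Whiteley's theorem: a set of edges of $K_{A,B}$ is independent in this matroid if and only if every induced subgraph with $|A'|\geq k$ and $|B'|\geq 1$ satisfies $|E'|\leq |A'|+k|B'|-k$. A $(k,1)$-Laman graph $G$ meets this sparsity bound by definition, so by Whiteley's theorem the rows of $R^{(k,1)}(G)$ are linearly independent, and Proposition \ref{rigid/stressfree-criterion} yields that $G$ is $(k,1)$-stress free. Combined with Laman condition (i), $|E|=|A|+k|B|-k$, this gives $\rank(R^{(k,1)}(G))=|A|+k|B|-k$, so by Proposition \ref{rigid/stressfree-criterion} again $G$ is $(k,1)$-rigid, and hence $(k,1)$-minimal.

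The delicate point is matching the two matroids correctly and citing the precise form of Whiteley's Laman-type theorem. If one sought a self-contained argument, the natural strategy would be to induct on $|A|+|B|$ using the Deletion and Contraction Lemmas. The case $k=1$ drops out easily: if $w\neq 0$ were a stress with support subgraph $G_w=(A_w\uplus B_w, E_w)$, then the vertex-equilibrium equations force $\deg_{G_w}(a)\geq 2$ for every $a\in A_w$ and $\deg_{G_w}(b)\geq 2$ for every $b\in B_w$, yielding $|E_w|\geq |A_w|+|B_w|$, in direct contradiction with Laman condition (ii). For $k\geq 2$ the analogous count only gives $|E_w|\geq |A_w|+\tfrac{k+1}{2}|B_w|$, which is too weak to contradict $|E_w|\leq |A_w|+k|B_w|-k$; exhibiting in every $(k,1)$-Laman graph a reducible vertex or vertex-pair amenable to the Deletion or Contraction Lemma is the main obstacle to a direct proof, and is exactly the kind of structural reduction carried out by Whiteley.
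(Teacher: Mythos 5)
Your proposal takes essentially the same route as the paper: both reduce the claim to Whiteley's Laman-type independence theorem for the scene-analysis matroid in \cite{Whiteley-scenes} (the paper points to \cite[Def.~4.2, Thm.~4.1]{Whiteley-scenes}, noting that $R^{(k,1)}(G)$ specializes to Whiteley's matrix), and then combine full row rank with Laman condition (i) via Proposition \ref{rigid/stressfree-criterion}. A small labeling slip is worth flagging but immaterial: in $R^{(k,1)}(G)$ the generic $k$-tuples $(\theta_{1a},\ldots,\theta_{ka})$ carry the index $a\in A$ and the scalars $\theta_{1'b'}$ carry the index $b'\in B$, not the reverse as you wrote (and $\theta_{1'a}$ is not a defined symbol).
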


\begin{proof}
In \cite[Def.~4.2]{Whiteley-scenes}, Whiteley considers the following
specialization of $R^{(k,1)}(G)$: for all $b\in B$,
$\theta_{1'b}$ is specialized to 1, and for all $a\in A$, $\theta_{ka}$
is specialized to 1. He then proves \cite[Thm.~4.1]{Whiteley-scenes} that
if $G$ is $(k,1)$-Laman then the rank of the resulting matrix is $|E|$,
and hence so is the rank of $R^{(k,1)}(G)$.
\end{proof}

On the other hand, as the following example shows, the converse of
Proposition \ref{prop:Laman--necessary} is false when both $k,l\geq 2$.

\begin{exa}\label{ex:(l,k)-LamanNOTrigid} \qquad
\begin{enumerate}
\item[1.] Let $G_1$ and $G_2$ be two copies of $K_{3,3}$
minus an edge, and let $G$ be obtained by gluing $G_1$ and $G_2$ along
the two vertices of the missing edge, denoted $ab$. Then
$G=(A\biguplus B,E)$ is $(2,2)$-Laman, but $G$
is not $(2,2)$-stress free, and hence is not $(2,2)$-minimal.
\item[2.] For $k,l\geq 2$, let
\[H=\underbrace{C^R\ldots C^R}_{l-2}\underbrace{C^L\ldots C^L}_{k-2} G\]
 be obtained from $G$ by iterative coning. Then $H$ is $(k,l)$-Laman,
but it is not $(k,l)$-minimal.
\end{enumerate}
\end{exa}
\begin{proof}
As $G_1^b$ and $G_2^b$ are both isomorphic to
$K_{3,3}$ minus the edge between $3$ and $3'$, it follows that
each of $G_1,G_2$ is $(2,2)$-rigid.
Hence in the rigidity matrix $R^{(2,2)}(G)$,
there is a non-trivial linear combination
of the rows of $G_1$ that equals the row of the missing edge
$ab$, and similarly for $G_2$; the difference of these two
linear combinations provides a non-zero linear dependence of the rows of
$R^{(k,l)}(G)$. Thus $G$ is not $(2,2)$-stress free, and hence it is
is not $(2,2)$-minimal. On the other hand, one readily checks that $G$ is
$(2,2)$-Laman. This completes the proof of Part 1.

Part 2 is an immediate consequence of Part 1.
Indeed, the Cone Lemma (Lemma \ref{lem:bipCone})
and the fact that $G$ is not $(2,2)$-minimal yield that $H$ is not $(k,l)$-minimal.
Further, it is straightforward to check that the left cone over an $(r,s)$-Laman graph
is $(r+1,s)$-Laman while the right cone over an  $(r,s)$-Laman graph
is $(r,s+1)$-Laman. As $G$ is $(2,2)$-Laman, we then conclude that $H$ is $(k,l)$-Laman.
\end{proof}

It would be interesting to have a complete combinatorial
characterization of minimal $(k, l)$-rigid graphs (i.e., bases of
the $(k, l)$-rigidity matroid) even for $k = l = 2$.

We now consider the effect of balanced shifting on trees (which are bipartite)
and bipartite outerplanar graphs.

\begin{theo}  \label{forest-planar}
Let $G$ be a bipartite graph with sides $A$ and $B$,
and let $<$ be the total order on $A\cup B$ with respect to which
$G^b$ is computed.
\begin{enumerate}
\item If $<$ is $(1,1)$-admissible and $G$ is a forest then
$K_{2,2}\nsubseteq G^b$; equivalently, $G$ is $(1,1)$-stress free.
\item If $<$ is $(2,1)$-admissible and $G$ is outerplanar,
then $K_{3,2}\nsubseteq G^b$; equivalently, $G$ is $(2,1)$-stress free.
(Similarly, if $<$ is $(1,2)$-admissible and $G$ is outerplanar, then
$G$ is $(1,2)$-stress free.)
\item  If $<$ is $(2,2)$-admissible and $G$ is planar, then
$K_{3,3} \nsubseteq G^b$; equivalently, $G$ is $(2,2)$-stress free.
\end{enumerate}
\end{theo}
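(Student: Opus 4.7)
Part (3) is literally Theorem \ref{theo:BipartiteGluck} from the previous section, so nothing new is required there. My plan is therefore to handle Part (1) by a short induction using the Deletion Lemma, and to deduce Part (2) from Part (3) by means of the Cone Lemma.

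For Part (1), I would induct on $|V(G)|$. If $G$ has no edges the assertion is trivial, so assume $G$ has at least one edge; then, being a forest, $G$ has a leaf $v$ of degree $1$. Whether $v \in A$ or $v \in B$, its degree satisfies both $1 \le l = 1$ and $1 \le k = 1$, so the Deletion Lemma (Lemma \ref{lem:bipDeletion}(1)) applies with $(k,l) = (1,1)$. The graph $G - v$ is again a bipartite forest and is $(1,1)$-stress free by the inductive hypothesis, hence so is $G$.

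For Part (2), the key geometric observation is that $C^{R} G$ is planar whenever $G$ is outerplanar. To verify this I would fix an outerplanar embedding of $G$ (so that every vertex lies on the outer face), place the new vertex $0'$ in the unbounded exterior region, and join $0'$ to each $a \in A$ by an arc drawn through that exterior region. No such arc meets an edge of $G$, since all edges of $G$ lie in the bounded region, and the arcs can be chosen pairwise non-crossing because they share the single endpoint $0'$ and terminate at distinct vertices appearing in a fixed cyclic order along the outer face. Hence $C^{R} G$ is a planar bipartite graph; by Theorem \ref{theo:BipartiteGluck} it is $(2,2)$-stress free, and the Cone Lemma (Lemma \ref{lem:bipCone}) then yields that $G$ itself is $(2,1)$-stress free. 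The $(1,2)$-statement follows from the symmetric argument with $C^{L} G$ in place of $C^{R} G$.

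All the substantive work is hidden inside Theorem \ref{theo:BipartiteGluck}; once that theorem is granted, the only non-routine step is the brief routing argument above, which shows that outerplanarity of the base graph is enough to guarantee planarity of its right (or left) cone. Everything else reduces to a direct invocation of the Deletion and Cone Lemmas.
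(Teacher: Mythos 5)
Your proof is correct and follows essentially the same approach as the paper: Part (1) by induction with the Deletion Lemma after removing a leaf, Part (2) by observing that the right (resp.\ left) cone of an outerplanar graph is planar and then invoking the Cone Lemma together with Part (3), which is Theorem \ref{theo:BipartiteGluck}. The paper merely asserts the planarity of the cone; your routing argument through the exterior region supplies the details that the paper leaves implicit.
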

\begin{proof}
Part (1) is proved by induction: for the inductive step,
pick a leaf and use the Deletion Lemma (Lemma \ref{lem:bipDeletion}).
Part (3) is Theorem \ref{theo:BipartiteGluck}.
In Part (2), the order $<$ has exactly two vertices in $A$
among the least three vertices. Add to $B$
a new and smallest vertex, and connect it to all vertices of $A$.
This creates a bipartite planar graph and a $(2,2)$-admissible order $<_{0'}$.
Now the Cone Lemma (Lemma \ref{lem:bipCone}) and Part (3) complete the proof.
\end{proof}

\noindent We remark that Parts (1) and (2) of Theorem \ref{forest-planar}
also follow easily by counting the edges of induced subgraphs
and using Whiteley's criterion, Theorem \ref{thm:(l,1)-Laman}.

\section{Graphs of polytopes}\label{sec:Cubical}
\subsection{Cubical polytopes}
We now discuss potential applications of bipartite rigidity,
\'a la Kalai \cite{Kalai-LBT},
to lower bound conjectures on face numbers of cell complexes with
a bipartite $1$-skeleton.

Recall that by a result of Blind and Blind \cite{Blind-Blind:Gaps},
if $P$ is a cubical $d$-polytope with $d>2$,
then the graph $G(P)$ of $P$ is bipartite.
Moreover, if $d>2$ is even, then the two sides of $G(P)$
have the same number of vertices. (These results were
generalized to arbitrary cubical spheres by
Babson and Chan \cite{Babson-Chan}.) We are interested
in the cubical conjecture of Jockusch \cite{Jock},
see Conjecture \ref{conj:Adin}, asserting that
if $K$ is a cubical polytope of dimension $d\geq 3$ with $f_0(K)$
vertices and $f_1(K)$ edges, then

Note that if $G$ is $(2, d-1)$-rigid, and has the same number
of vertices on each side, then $G$ has at least
$\frac{d+1}{2}f_0(G)-2(d-1)$ edges. The graph $G(P)$ of
a stacked cubical polytope $P$ is bipartite and has the same number
of vertices on each side, but has only $\frac{d+1}{2}f_0(P)-2^{d-1}$ edges.
We will show in Proposition \ref{prop:stacked cubes rigidity}
 that for such $P$, it is possible to add to $G(P)$
exactly $2^{d-1}-2(d-1)$ edges, all in one facet of $P$, in such a way that
the resulting graph is $(2,d-1)$-rigid and stress free.
We will also establish a similar statement with respect to
$(1,d)$-rigidity and stress freeness, see
Proposition \ref{prop:stacked cubes (1,d)-rigidity}.

This yields the following approach to Jockusch's conjecture;
specifically, a positive answer to the following problem will
imply Conjecture \ref{conj:Adin} for all  \emph{even} $d>2$:
\begin{prob}\label{prob:Adin(2,d-1)}
Let $G$ be the graph of a cubical $d$-polytope, where $d>2$
is even. Is it possible to add $2^{d-1}-2(d-1)$ edges to $G$
to obtain a $(2,d-1)$-rigid graph? Is it possible to add
$2^{d-1}-d$ edges to $G$ to obtain a $(1,d)$-rigid graph?
\end{prob}

A similar reasoning shows that a positive answer to the following
problem will imply Conjecture \ref{conj:Adin} for an arbitrary $d$:
\begin{prob}\label{prob:Adin((d+1)/2,(d+1)/2)}
Let $G$ be the graph of a cubical $d$-polytope, where $d>2$.
Is it possible to add
$2^{d-1}-\lfloor\frac{d+1}{2}\rfloor \lceil \frac{d+1}{2}\rceil$
edges to $G$ to obtain a
$(\lfloor \frac{d+1}{2}\rfloor, \lceil\frac{d+1}{2})\rceil$-rigid
graph?
\end{prob}

We are now in a position to show how to add edges to the graph
of a {\em stacked cubical polytope} to make it $(2,d-1)$-rigid and stress-free.
(Recall that a stacked cubical polytope is a polytope obtained
starting with a cube and repeatedly gluing cubes onto facets.)
Our construction relies on the following lemmas.

\begin{lem}\label{lem:cube-edges in opposite facets}
For $d\geq 3$, let $C^d$ be the $d$-cube, and let $A$ and $B$ be the two
sides of the bipartite graph $G(C^d)$ of $C^d$.
Fix vertices $v,v^*\in A$ that are contained in a $2$-face of $C^d$.
Let $F$ and $F^*$ be opposite facets of $C^d$
such that $v\in F\cap A$ and $v^*\in F^*\cap A$ (they exist when
$d\geq 3$). Add to $G(C^d)$ all the edges $vb$ and $v^*b^*$ where
$b\in F\cap B$ and
$b^*\in F^*\cap B$ to obtain a new graph $G'(C^d)$.
Then $G'(C^d)$ is $(2,d-1)$-rigid and stress free.
\end{lem}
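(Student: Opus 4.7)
I would induct on $d$, using Propositions and Lemmas from Section~\ref{sec:(k,l)-rigidity}. First, a direct computation shows
\[
|E(G'(C^d))| = d \cdot 2^{d-1} + 2\bigl(2^{d-2}-(d-1)\bigr) = (d-1)|A| + 2|B| - 2(d-1),
\]
which matches the $(2,d-1)$-minimality edge count. Hence by Proposition~\ref{rigid/stressfree-criterion} it suffices to prove $(2,d-1)$-rigidity; stress-freeness will then follow automatically from the edge count equaling the rank.

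\textbf{Base case} $d=3$: the number of added edges $2^{d-1}-2(d-1)$ vanishes, so $G'(C^3) = G(C^3)$ is the $3$-cube graph, a maximal planar bipartite graph on $8$ vertices with $12 = 2\cdot 8 - 4$ edges (Lemma~\ref{lem:2n-4}). Theorem~\ref{theo:BipartiteGluck} then gives $(2,2)$-stress-freeness, and the edge count yields $(2,2)$-rigidity.

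\textbf{Inductive step.} With $F,F^*$ the opposite facets, $M$ the perfect matching between them, and $Q$ the common $2$-face (whose remaining two vertices I label $w\in F\cap B$ and $w^*\in F^*\cap B$), write
\[
G'(C^d) \;=\; G(F)\cup G(F^*)\cup M \cup E_v \cup E_{v^*}.
\]
The central observation is that the subgraph $G_F := G(F)\cup E_v$ supported on $V(F)$ has $v$ adjacent to \emph{every} vertex of $F\cap B$; identifying $v$ with a new apex vertex, this exhibits $G_F$ as the left cone $C^L\bigl(G(F)-v\bigr)$. By the Cone Lemma (Lemma~\ref{lem:bipCone}), the $(2,d-1)$-rigidity of $G_F$ thus reduces to the $(1,d-1)$-rigidity of $G(F)-v$, which is just the $(d-1)$-cube graph with a single vertex removed; this in turn one can attack by a further cone reduction or by the inductive hypothesis applied to $F \cong C^{d-1}$ after some massaging. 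An analogous statement holds for $G_{F^*}$.

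Having established rigidity of $G_F$ and $G_{F^*}$ at the right level, my plan is to reassemble $G'(C^d)$ by first gluing $G_F$ to $G_{F^*}$ along a bridge subgraph containing $Q$ and a well-chosen set of matching edges that ensures the hypotheses of the Gluing Lemma (Lemma~\ref{lem:bipGluing}, Part~1) are met, and then to add the remaining matching edges via the Deletion Lemma (Lemma~\ref{lem:bipDeletion}, Part~2), since every $A$-vertex of $G'(C^d)$ has degree $\geq d$ and every $B$-vertex has degree $\geq d-1\geq 2$.

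\textbf{Main obstacle.} The hard part is that $G_F = G(F)\cup E_v$ does \emph{not} directly match the inductive hypothesis applied to $F$, which would add edges from \emph{two} opposite super-vertices within $F$, not a single $v$ connected to all of $F\cap B$. Reconciling this asymmetry via the cone reformulation, and simultaneously arranging a common rigid subgraph of $G_F$ and $G_{F^*}$ large enough to satisfy the $|B_1\cap B_2|\geq d-1$ requirement of the Gluing Lemma (the natural intersection at $Q$ alone gives only $|B_1\cap B_2|=2$, which is insufficient for $d\geq 4$), is where the argument must be most delicate; enlarging the intersection by including the matching edges incident to $Q$, or iterating the gluing one facet-edge at a time, is the natural route.
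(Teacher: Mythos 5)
Your edge count and base case are correct, and the cone observation about $G_F := G(F)\cup E_v$ being isomorphic to $C^L\bigl(G(F)-v\bigr)$ is a valid identification. However, there is a genuine gap in the inductive step.

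The fatal flaw is that $G_F$ is \emph{not} $(2,d-1)$-rigid for $d\geq 4$, so the cone reduction cannot succeed. Count: $G_F$ lives on $|A\cap F|=|B\cap F|=2^{d-2}$ vertices and has $|E(G(F))| + (2^{d-2}-(d-1)) = (d-1)2^{d-2} + 2^{d-2} - (d-1) = d\cdot 2^{d-2} - (d-1)$ edges, while $(2,d-1)$-rigidity would require rank $(d-1)2^{d-2} + 2\cdot 2^{d-2} - 2(d-1) = (d+1)2^{d-2} - 2(d-1)$. The deficit is $\bigl[(d+1)2^{d-2} - 2(d-1)\bigr] - \bigl[d\cdot 2^{d-2} - (d-1)\bigr] = 2^{d-2} - (d-1)$, which is strictly positive for $d\geq 4$. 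Equivalently, after the Cone Lemma you need $G(F)-v$ to be $(1,d-1)$-rigid, but that graph is simply too sparse. You cannot sidestep this by invoking the inductive hypothesis either: the hypothesis concerns $G'(C^{d-1})$, which has \emph{two} apex vertices $v,v^*$ in opposite facets of the $(d-1)$-cube, not a single vertex connected to all of one side. The second difficulty you already sense — that $V(F)$ and $V(F^*)$ are literally disjoint, so $G_F$ and $G_{F^*}$ have empty vertex intersection and the Gluing Lemma cannot be applied without first making one of them $(2,d-1)$-rigid on a much larger vertex set — is a symptom of the same underlying problem.

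The paper's proof takes a structurally different route and never splits along the $F/F^*$ axis. Instead, it uses a \emph{third} pair of opposite facets $H/H^*$ chosen so that $H$ contains \emph{both} $v$ and $v^*$ (this is possible because $v,v^*$ lie on a common $2$-face). The argument then proceeds entirely by the Contraction Lemma and the Cone Lemma on $G'(C^d)$ itself, never invoking the Gluing Lemma: (1) contract all $A$-vertices of $F\cap H^*$ into $v$, and of $F^*\cap H^*$ into $v^*$ (each pair has exactly $d-1$ common neighbors, since $v$ is joined to all of $F\cap B$); (2) contract the $B$-vertices of $F\cap H^*$ to a single vertex $p$ and likewise obtain $p^*$ on the $F^*$ side (each contraction has the $2$ common neighbors $v, v^*$); (3) contract $p^*$ with $p$; (4) $p$ is now a right-cone vertex, so delete it via the Cone Lemma and land on $G'(H) \cong G'(C^{d-1})$, to which the inductive hypothesis applies. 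The key insight you are missing is to privilege the direction $H/H^*$ separating the extra edges rather than the direction $F/F^*$ on which they sit, and to shrink the graph by contraction rather than assembling it by gluing.
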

\begin{proof}
The proof is by induction on $d$. In the case of $d=3$ no edges are
added and $(2,2)$-stress freeness follows from
Theorem \ref{theo:BipartiteGluck} (or check directly).
Moreover, since the graph of the $3$-cube is a {\em maximal} planar
bipartite graph, it is also $(2,2)$-rigid.

Assume $d>3$. Then there exists a facet $H$ of $C^d$ containing both
$v$ and $v^*$; we let $H^*$ denote the opposite facet.
We now show, in four steps, that $(2,d-1)$-rigidity and
stress freeness of $G'(C^d)$ follow from $(2,d-2)$-rigidity and
stress freeness of $G'(H)\cong G'(C^{d-1})$ --- the graph formed
from the graph of $H$ in the same manner as $G'(C^d)$ is formed
from the graph of $C^d$.

\smallskip\noindent{\bf Step 1:} linearly order all
$t\in A\cap F\cap H^*$ and contract successively $t$ with $v$
(in $G'(C^d)$); similarly, for $t^*\in A\cap F^*\cap H^*$
contract successively $t^*$ with $v^*$; call the resulting graph $G_1$.
By construction of $G'(C^d)$, $v$ is connected to {\em every}
vertex in $F\cap B$ but only to one vertex in $F^*\cap B$
(namely, the common neighbor of $v,v^*$ in $F^*\cap B$) --- the vertex
that has no neighbors in $F\cap A$ except $v$. On the other hand, it is evident
from the structure of $G(C^d)$ that
$t$ has exactly $d-1$ neighbors in $F\cap B$. Therefore, it follows that $t,v$
have exactly $d-1$ common neighbors in $G'(C^d)$.
(The same argument also applies to $t^*,v^*$.)
Hence, by the Contraction Lemma (Lemma \ref{lem:bipVertexSpliting}),
to complete the proof,
it is enough to show that $G_1$ is $(2,d-1)$-rigid and stress free.

\smallskip\noindent{\bf Step 2:} successively contract pairs of vertices in
$B\cap F\cap H^*$ until a single vertex $p$ remains, and similarly contract
vertices in $B\cap F^*\cap H^*$ until a single vertex $p^*$ remains; call the
resulting graph $G_2$. At each contraction, the two identified vertices
have exactly two common neighbors, namely, $v$ and $v^*$.
Thus, by Lemma \ref{lem:bipVertexSpliting}, it suffices to prove
that $G_2$ is $(2,d-1)$-rigid and stress free.

\smallskip\noindent{\bf Step 3:} contract $p^*$ with $p$ to obtain $G_3$.
As $p$ and $p^*$ have two common neighbors in $G_2$, namely $v$ and $v^*$,
by Lemma \ref{lem:bipVertexSpliting}
it remains to verify the assertion for $G_3$.

\smallskip\noindent{\bf Step 4:} in $G_3$, $p$ is a right-cone vertex:
indeed, it is connected to all vertices that belong to side $A$.
Delete $p$ and all edges incident with it to obtain $G_4$.
By the Cone Lemma (Lemma \ref{lem:bipCone}) we need to show
that $G_4$ is $(2,d-2)$-rigid and stress free.

It remains to notice that $G_4$ is obtained from the
graph $G(H)$ of $H$ by adding all edges $vb$ where $b$ is a vertex
in the facet $H\cap F$ of $H$, and all edges $v^*b^*$
where $b^*$ is a vertex in the opposite facet $H\cap F^*$
of $H$. Thus, the assertion follows by induction.
\end{proof}

\begin{lem}\label{lem:cube-edges in one facet}
For $d\geq 3$, let $C^d$ be the $d$-cube, let
$A$ and $B$ be the two sides of the bipartite graph $G(C^d)$ of $C^d$,
and let $F$ be a facet of $C^d$. Fix two vertices $u,w\in F\cap A$, and
add to $G(C^d)$ all the edges $ub$ and
$wb$ where $b\in F\cap B$ to obtain a new graph $G$.
Then $G$ is $(2,d-1)$-rigid and stress free.
\end{lem}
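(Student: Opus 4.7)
The plan is to argue by induction on $d$, closely following the structure of the proof of Lemma~\ref{lem:cube-edges in opposite facets} but adapting to the asymmetric setting in which both added-edge sources $u$ and $w$ lie in the same facet $F$ (rather than in opposite facets). For the base case $d=3$, the graph $G$ coincides with $G(C^3)$, which is a maximal bipartite planar graph on $8$ vertices with $12=2\cdot 8-4$ edges, and is therefore $(2,2)$-rigid and stress-free by Theorem~\ref{theo:BipartiteGluck} and Lemma~\ref{lem:2n-4}.

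For the inductive step $d\geq 4$, I would pick a facet $H$ of $C^d$ distinct from $F$ that contains both $u$ and $w$ (such $H$ exists whenever $u$ and $w$ agree on some coordinate other than the one defining $F$), and let $H^*$ denote the opposite facet. Mimicking the four-step contraction reduction of the previous lemma, I would: (i)~contract each $t\in A\cap F\cap H^*$ successively with $u$, using that $t$ and $u$ share exactly the $d-1=l$ cube neighbors of $t$ in $B\cap F$ (all of which are neighbors of $u$ by the added edges), so both rigidity and stress-freeness are preserved by the Contraction Lemma (Lemma~\ref{lem:bipVertexSpliting}); (ii)~similarly absorb $A\cap F^*\cap H^*$ into $u$, now exploiting the extra neighbors in $B\cap F^*\cap H^*$ that $u$ has acquired during step~(i) to keep the common-neighbor count at $d-1$; (iii)~collapse $B\cap F\cap H^*$ pairwise to a single vertex $p$, each contraction using the two common neighbors $u$ and $w$; (iv)~handle $B\cap F^*\cap H^*$ analogously to produce $p^*$, contract $p^*$ with $p$, and observe that $p$ has become a right-cone vertex connected to all remaining $A$-vertices of $H$, which can then be deleted by the Cone Lemma (Lemma~\ref{lem:bipCone}). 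The remaining graph is $G_H$, the graph to which the current lemma applies in dimension $d-1$ (the $(d-1)$-cube $H$ with facet $F\cap H$ and the same sources $u,w$); by induction $G_H$ is $(2,d-2)$-rigid and stress-free, and the Cone Lemma lifts this to $(2,d-1)$-rigidity and stress-freeness of $G$.

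The main obstacle I anticipate is step~(iv): after the earlier contractions, a pair $b^*_1,b^*_2\in B\cap F^*\cap H^*$ has only the single common neighbor $u$ (rather than the two common neighbors $v,v^*$ provide in the previous lemma), so the Contraction Lemma would preserve stress-freeness but not rigidity. To restore the missing second common neighbor I would introduce an auxiliary ``surrogate source'' in $F^*$ by first contracting the vertices of $A\cap F^*\cap H$ to a single vertex $q^*$ and verifying that $q^*$ ends up adjacent to all of $B\cap F^*\cap H^*$; justifying these preliminary contractions without breaking rigidity is where I expect most of the careful bookkeeping. A secondary difficulty is the exceptional configuration in which $u$ and $w$ disagree on every coordinate other than the one defining $F$ (which forces $d$ to be odd and precludes the existence of a facet $H\neq F$ containing both); in that case, I would invoke Lemma~\ref{lem:cube-edges in opposite facets} together with the Gluing Lemma (Lemma~\ref{lem:bipGluing}), using that $|E(G)|$ already matches the maximal $(2,d-1)$-rigid edge count $(d-1)|A|+2|B|-2(d-1)$, so that establishing either rigidity or stress-freeness of $G$ would suffice.
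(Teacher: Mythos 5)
Your choice of auxiliary facet is where the argument diverges from the paper's and where it runs into trouble. You pick $H$ to contain \emph{both} $u$ and $w$, with $H^*$ containing neither; the paper instead picks opposite facets $H,H^*$ with $u\in H$ and $w\in H^*$. Since $u\neq w$ are both in $F\cap A$, they necessarily \emph{disagree} on at least one coordinate other than the $F$-coordinate, so the paper's separating pair $H,H^*$ always exists; your containing facet $H$ exists only when they \emph{agree} on some other coordinate, which fails precisely in the exceptional configuration you flag (possible for $d$ odd). More importantly, the separating choice is what makes the whole reduction work symmetrically: the paper contracts, for each $b\in B\cap F^*$, the vertex $b_F\in A\cap F$ with $u$ if $b\in H$ and with $w$ if $b\in H^*$ (always $d-1$ common neighbors); then collapses all of $B\cap F$ onto one right-cone vertex $p$ using $u,w$ as the pair of common neighbors; deletes $p$ by the Cone Lemma; and finally contracts $u$ with some $v\in A\cap F^*\cap H$ and $w$ with $v^*\in A\cap F^*\cap H^*$ lying in a common $2$-face ($d-2$ common neighbors each), landing exactly on the graph $G'(F^*)$ of Lemma~\ref{lem:cube-edges in opposite facets}. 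With your choice of $H$, the analogous step~(iv) collapse of $B\cap F^*\cap H^*$ has only the single shared source $u$, as you correctly observe, and the ``surrogate source $q^*$'' you propose to manufacture is not justified: contracting $A\cap F^*\cap H$ down to a single vertex would need its own common-neighbor bookkeeping that you do not supply, and it is not clear that the resulting $q^*$ becomes adjacent to all of $B\cap F^*\cap H^*$ with the right multiplicities to serve as the second common neighbor. The fallback via the Gluing Lemma for the exceptional case is also only sketched: you would have to exhibit an explicit decomposition $G=G_1\cup G_2$ satisfying the hypotheses of Lemma~\ref{lem:bipGluing}, and the edge-count observation alone does not do that. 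In short, the base case and the general induction-on-$d$ shape are right, but replacing the separating pair $H,H^*$ by a containing facet $H$ introduces both an existence gap and a structural gap in the last contraction stage, neither of which your proposal closes.
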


\begin{proof}
If $d=3$, then no edges are added and $(2,2)$-rigidity and stress freeness
follow from Theorem \ref{theo:BipartiteGluck}. Thus, assume $d>3$.

As before, let $F^*$ be the facet of $C^d$ opposite to $F$, and let $H$ and $H^*$
be two opposite facets of $C^d$ such that $u\in H$ and $w\in H^*$. For a vertex
$b\in B\cap F^*$ denote by $b_F$ the unique neighbor of $b$ in $A\cap F$.

\smallskip\noindent{\bf Step 1:}
for every $b\in B\cap F^* \cap H$ contract $b_F$ with $u$, and
for every $b\in B\cap F^* \cap H^*$ contract $b_F$ with $w$;
call the resulting graph $G_1$. At each contraction,
the two identified vertices have $d-1$ common neighbors. Thus,
by the Contraction Lemma (Lemma \ref{lem:bipVertexSpliting})
it is enough to show that $G_1$ is $(2,d-1)$-rigid and stress free.

\smallskip\noindent{\bf Step 2:}
fix $p\in B\cap F$ and successively contract all other vertices in
$B\cap F$ with $p$ to obtain $G_2$. At each contraction, the two
identified  vertices have $u$ and $w$ as their only common
neighbors. Hence by Lemma \ref{lem:bipVertexSpliting}
it suffices to check that $G_2$ is $(2,d-1)$-rigid
and stress free.

\smallskip\noindent{\bf Step 3:}
observe that $p$ is a  right-cone vertex in $G_2$;
delete it to obtain $G_3$. By the Cone Lemma (Lemma \ref{lem:bipCone}),
the result will follow if we show that $G_3$ is $(2,d-2)$-rigid
and stress free.

\smallskip\noindent{\bf Step 4:}
fix two vertices $v\in A\cap F^*\cap H$ and $v^*\in A\cap F^*\cap H^*$ that
are contained in a $2$-face: such $v,v^*$ exist as $d>3$.
Contract $u$ with $v$ and $w$ with $v^*$ to obtain $G_4$. Since there are $d-2$
common neighbors at each contraction, by Lemma~\ref{lem:bipVertexSpliting},
it is enough to show that $G_4$ is $(2,d-2)$-rigid and
stress free. This, however, is an immediate consequence of
Lemma \ref{lem:cube-edges in opposite facets},
as, using the notation of that lemma, $G_4=G'(F^*)$.
\end{proof}

\begin{prop}\label{prop:stacked cubes rigidity}
For $d\geq 3$, let $P$ be a stacked cubical $d$-polytope,
let $A,B$ be the two sides of the bipartite graph of $P$,
and let $F$ be a facet of $P$.
Fix vertices $u,w\in F\cap A$ and add to the graph of $P$ all edges
$ub$ and $wb$ where $b\in F\cap B$ to obtain a new graph $G$.
Then $G$ is $(2,d-1)$-rigid and stress free.
\end{prop}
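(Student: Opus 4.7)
The plan is to induct on the number of cubes $m$ in the stacking. The base case $m=1$, where $P$ is a single $d$-cube, is exactly Lemma \ref{lem:cube-edges in one facet}. For the inductive step with $m\geq 2$, I observe that the stacking tree has at least two leaves while $F$ is a facet of at most one cube of $P$; hence I may pick a leaf cube $C_m$ with $F\not\subseteq C_m$. Writing $P=P'\cup C_m$ glued along the common facet $F'$, the facet $F$ remains a facet of $P'$, and the induction hypothesis applied to $(P',F,u,w)$ supplies that $H:=G(P')\cup\{ub,wb : b\in F\cap B\}$ is $(2,d-1)$-rigid and stress free. Let $F''$ denote the facet of $C_m$ opposite to $F'$; then $G$ is obtained from $H$ by adjoining the $2^{d-1}$ vertices of $F''$ together with the edges of $G(C_m)\setminus G(F')$.

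To deduce stress-freeness of $G$ from that of $H$, I would perform a sequence of $2^{d-1}$ contractions taking $G$ down to $H$ and apply Part~1 of the Contraction Lemma (Lemma \ref{lem:bipVertexSpliting}) at each step. Label the vertices of $C_m$ by $\{0,1\}^d$ so that $F'=\{v : v_d=0\}$, and define the same-side bijection $\phi:F''\to F'$ by $\phi(v):=v+e_1-e_d$. This map is parity-preserving and is an isomorphism of bipartite graphs between $G(F'')$ and $G(F')$; consequently, contracting each $v\in F''$ with $\phi(v)$ sends every vertical edge $(v,v^*)$ to an edge $(\phi(v),v^*)\in G(F')$ and every horizontal $F''$-edge to an $F'$-edge, so the fully-contracted graph is precisely $H$. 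I will process all vertices of $F''\cap B$ first and all vertices of $F''\cap A$ second. A direct check then yields that at a $B$-contraction of $v$ the common neighbors of $v$ and $\phi(v)$ in the current graph are exactly $\{v^*, v+e_1\}$, giving $|C|=2=k$; and at an $A$-contraction of $v$ the previously-contracted $B$-vertices collapse the $B$-neighborhood of $v$ onto $\{v^*\}\cup\{v+e_j+e_1-e_d : 2\leq j\leq d-1\}$, which is precisely the $F'\cap B$-neighborhood of $\phi(v)$, giving $|C|=d-1=l$. In both cases the hypothesis of Lemma \ref{lem:bipVertexSpliting} Part~1 is met, and iterating yields that $G$ is $(2,d-1)$-stress free.

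For any stacked cubical polytope one has $|A(P)|=|B(P)|$, and a direct tally (edges of $G(P)$ plus the $2^{d-1}-2(d-1)$ newly added edges in $F$) gives $|E(G)|=(d-1)|A(P)|+2|B(P)|-2(d-1)$; combined with stress-freeness, Proposition \ref{rigid/stressfree-criterion} then forces $(2,d-1)$-rigidity. The main obstacle will be the delicate common-neighbor accounting at the $A$-contractions, where $|C|=d-1$ exactly saturates the Contraction Lemma bound: both flipping two coordinates in $\phi$ (rather than one, which would merely reproduce the partner map landing on the opposite side) and the order of contractions (all $B$-vertices first) are essential, and a small misstep would overshoot the bound by one. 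This rests on $\phi$ being simultaneously a graph homomorphism between the two $(d-1)$-cubes and on vertices of $F''$ having no neighbors in $P'\setminus F'$, so that extra $P'$-neighbors of $\phi(v)$ never enter the common-neighbor count.
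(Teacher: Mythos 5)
Your proof is correct and takes a genuinely different route from the paper's. Both arguments induct on the number $m$ of cubes with Lemma~\ref{lem:cube-edges in one facet} as the base case, but they peel the polytope apart from opposite ends and lean on different lemmas of Section~\ref{sec:(k,l)-rigidity}. The paper orders the cubes $C_1,\dots,C_m$ so that the distinguished facet $F$ sits on the \emph{first} cube, builds $P$ up by successive gluings, establishes \emph{rigidity} at each step via the Gluing Lemma (after replacing the already-rigid subgraph by the complete bipartite graph on its vertex set, so that the overlap $F'$ meets the hypotheses of Lemma~\ref{lem:bipGluing}(1)), and then obtains stress-freeness from the edge count. You instead pick a leaf cube $C_m$ \emph{avoiding} $F$, collapse it onto the gluing facet via a parity-preserving cube automorphism $\phi(v)=v+e_1-e_d$, and deduce \emph{stress-freeness} from the Contraction Lemma, iterated with all $B$-contractions (where $|C|=2=k$) preceding all $A$-contractions (where $|C|=d-1=l$); rigidity then follows from the same edge count together with Proposition~\ref{rigid/stressfree-criterion}. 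I checked the common-neighbor accounting: for $v\in F''\cap B$ the only shared neighbors with $\phi(v)$ are $v^*=v-e_d$ and $v+e_1$, unchanged through all prior $B$-contractions since those touch only other $B$-vertices; for $v\in F''\cap A$, after the $B$-pass the neighborhood of $v$ collapses exactly onto the $F'\cap B$-neighborhood of $\phi(v)$ (using $v^*=\phi(v)+e_1$), giving precisely $d-1$ common neighbors, with no contamination from $P'$-side neighbors of $\phi(v)$ or from the added $F$-edges, because $N(v)$ lies entirely inside $C_m$. Your ordering and choice of $\phi$ are indeed both essential (doing $A$-contractions first would give $|C|=d-1>2$ at the $B$-stage for $d\geq4$, and a one-coordinate flip would change parity). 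Your approach has the virtue of keeping the whole section in the language of contractions used in Lemmas~\ref{lem:cube-edges in opposite facets} and \ref{lem:cube-edges in one facet}, avoiding the complete-bipartite-replacement device, at the cost of a sharper common-neighbor bookkeeping that exactly saturates the bounds.
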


\begin{proof}
First, observe that $P$ can be formed by successively stacking cubes in a
certain order $C_1,C_2,...,C_m$ satisfying the condition that $F$ is a facet of
$C_1$: indeed, the graph whose vertices are the cubes $C_i$ and whose edges are
between the cubes that are glued along a facet, is a tree, and
so any cube can be taken to be the first in the stacking process.
Let $P_i$ be the stacked cubical sphere obtained by stacking $C_1,...,C_i$,
and let $G_i$ be the corresponding graph (with the added edges in $F$). In
particular, $G=G_m$. We show by induction that $G_i$ is
$(2,d-1)$-rigid and stress free.

For $G_1$, this follows from Lemma \ref{lem:cube-edges in one facet}, and so
assume $i>1$. By induction, $G_{i-1}$ is $(2,d-1)$-rigid, and hence
its rigidity matrix has the same rank as the $(2,d-1)$-rigidity matrix of
the complete bipartite graph on the same vertex set (with same sides as in
$G_{i-1}$), denoted $K(i-1)$. Thus the $(2,d-1)$-rigidity matrices of $G_i$ and
$G_i\cup K(i-1)$ have the same rank. Let $G'_i$ be the restriction of
$G_i\cup K(i-1)$ to the vertices of $C_i$. Then $G'_i$ is the graph
of the $d$-cube with all bipartite edges in one of its facets added.
By Lemma \ref{lem:cube-edges in one facet}, $G'_i$ is $(2,d-1)$-rigid.
Therefore, by the Gluing Lemma (Lemma \ref{lem:bipGluing}),
the union $G_i\cup K(i-1)=K(i-1)\cup
G'_i$ is $(2,d-1)$-rigid, and hence so is $G_i$.

Counting the number of edges in the graph $G_i$ with sides
$A_i\subseteq A$ and $B_i\subseteq B$ yields
\[
|E(G_i)|=(d+1)\cdot i \cdot 2^{d-1}-2(d-1)
=(d-1)|A_i|+2|B_i|-2(d-1).
\]
Thus, $G_i$ is also $(2,d-1)$-stress free.
\end{proof}

\begin{cor}\label{cor:stacked cubes stress free}
For $d\geq 3$, the graph of a stacked cubical $d$-polytope
is $(2,d-1)$-stress free. 
\end{cor}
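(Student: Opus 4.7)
The plan is straightforward: this corollary follows immediately from the preceding proposition. Proposition~\ref{prop:stacked cubes rigidity} already establishes the stronger claim that the augmented graph $G$, obtained from $G(P)$ by adding the edges $\{ub,wb : b\in F\cap B\}$ in some facet $F$ (with $u,w\in F\cap A$ fixed), is \emph{both} $(2,d-1)$-rigid and $(2,d-1)$-stress free. So the first step is simply to invoke that proposition to produce such an augmented graph $G\supseteq G(P)$ on the same vertex set.

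The only other ingredient needed is the observation that $(k,l)$-stress freeness is monotone under passage to spanning subgraphs. By Proposition~\ref{rigid/stressfree-criterion}, $(k,l)$-stress freeness of a bipartite graph $H$ is equivalent to linear independence of the rows of the rigidity matrix $R^{(k,l)}(H)$. Deleting edges from $H$ simply deletes the corresponding rows of $R^{(k,l)}(H)$ without altering the columns or the remaining rows, and a subset of a linearly independent set of vectors is linearly independent. Hence any spanning subgraph of a $(k,l)$-stress free graph is itself $(k,l)$-stress free.

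Applying this monotonicity to $G(P)\subseteq G$ then gives the desired conclusion. There is no real obstacle here: all the substantive rigidity work (contraction, coning, gluing along facets, induction on the stacking order) has already been carried out in Lemmas~\ref{lem:cube-edges in opposite facets} and~\ref{lem:cube-edges in one facet} and in Proposition~\ref{prop:stacked cubes rigidity}. The corollary is essentially a bookkeeping remark extracting the stress-free half of the conclusion, stated cleanly in terms of the original polytope graph without the auxiliary edges in $F$.
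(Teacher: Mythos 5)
Your argument is correct and is exactly the intended (and in the paper, implicit) deduction: Proposition~\ref{prop:stacked cubes rigidity} gives a $(2,d-1)$-stress free supergraph $G\supseteq G(P)$ on the same vertex set, and stress freeness passes to subgraphs because deleting edges only deletes rows of $R^{(2,d-1)}$, preserving linear independence by Proposition~\ref{rigid/stressfree-criterion}. Nothing more is needed.
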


Using the numerical condition of Theorem \ref{thm:(l,1)-Laman}
on $(1,d)$-minimality, we also establish the following
$(1,d)$-analog of Proposition \ref{prop:stacked cubes rigidity}.

\begin{prop} \label{prop:stacked cubes (1,d)-rigidity}
For $d\geq 4$, let $P$ be a stacked cubical $d$-polytope and $F$ a facet of
$P$. Then it is possible to add to the graph of $P$ exactly $2^{d-1}-d$ edges,
all of them in $F$, so that the resulting graph is $(1,d)$-Laman, and
hence $(1,d)$-rigid and stress free.
\end{prop}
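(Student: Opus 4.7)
The plan is to mirror the proof of Proposition \ref{prop:stacked cubes rigidity}, targeting the $(1,d)$-Laman property instead of $(2,d-1)$-rigidity, and to invoke Theorem \ref{thm:(l,1)-Laman} at the end to conclude $(1,d)$-rigidity and stress freeness. The first step is to choose the $2^{d-1}-d$ edges to add inside $F$: these should be distributed among the bipartite pairs within $F$ so that no two vertices in $A\cap F$ accumulate too many added edges to common neighbors in $B\cap F$ (which, as we explain below, would violate the density condition of Laman).

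Condition (i) of Definition \ref{def:Laman} is then immediate from counting: the graph of the stacked polytope already has $(d+1)|A|-2^{d-1}$ edges (using $|A|=|B|$, which holds for stacked cubical polytopes since each glued cube contributes $2^{d-2}$ new vertices to each side), so adding the $2^{d-1}-d$ specified edges yields exactly $d|A|+|B|-d$.

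The main work lies in condition (ii): every induced subgraph $(A',B',E')$ with $|A'|\geq 1$ and $|B'|\geq d$ must satisfy $|E'| \leq d|A'|+|B'|-d$. I would proceed by induction on the number $i$ of stacked cubes, mirroring the inductive structure of Proposition \ref{prop:stacked cubes rigidity}. In the base case (a single $d$-cube with the chosen added edges in $F$), one reduces via the Cone and Contraction Lemmas in the spirit of Lemmas \ref{lem:cube-edges in opposite facets} and \ref{lem:cube-edges in one facet}; the case $d=4$ provides an explicit base since $2^{d-1}-d=4$ coincides with the total number of missing bipartite edges in the $(d-1)$-cube facet $F$, so one simply adds all of them. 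For the inductive step, one uses the trick from Proposition \ref{prop:stacked cubes rigidity}: enlarging by the complete bipartite graph $K(i-1)$ on the vertex set of $P_{i-1}$ does not change the $(1,d)$-rigidity matroid, since $G_{i-1}$ is $(1,d)$-rigid by induction; the restriction of $G_i\cup K(i-1)$ to $C_i$ is a $d$-cube with all bipartite edges in the shared facet added, which should be established as $(1,d)$-rigid in an auxiliary lemma; one then applies the Gluing Lemma using $|A_1\cap A_2|=|B_1\cap B_2|=2^{d-2}$, which gives $|B_1\cap B_2|\geq d$ precisely when $d\geq 4$, explaining the hypothesis.

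The main obstacle is condition (ii) in the base case: one must carefully choose the $2^{d-1}-d$ added edges so that no induced bipartite subgraph is too dense. For instance, testing with $A'=\{u,w\}$ and $B'=B\cap F$ already shows that the total number of added edges incident to any pair $u,w\in A\cap F$ must be bounded by $2^{d-2}-d+2$; this forces a rather even distribution of the added edges across the vertices of $A\cap F$. For $d=4$ the construction is essentially forced, but for $d\geq 5$ one must spread the $2^{d-1}-d$ edges thinly enough to avoid creating a forbidden dense block such as $K_{2,d+1}$, while keeping enough structure for the inductive step to go through. A secondary obstacle is the auxiliary lemma asserting that a $d$-cube with all bipartite edges in a facet added is $(1,d)$-rigid; for $d=4$ this coincides with the Laman case, while for $d\geq 5$ it has more edges than a Laman graph but should still be rigid, requiring a separate Cone/Contraction argument.
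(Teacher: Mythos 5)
Your high-level scaffolding is right and matches the paper: condition (i) follows by counting; the inductive step over the stacked cubes uses the Gluing Lemma exactly as in Proposition~\ref{prop:stacked cubes rigidity}, with the observation that $|A_1\cap A_2|=|B_1\cap B_2|=2^{d-2}\geq \max(1,d)$ when $d\geq 4$; and one concludes $(1,d)$-rigidity and stress-freeness from the Laman condition via Theorem~\ref{thm:(l,1)-Laman}. But the base case, which you flag as the ``main obstacle,'' is a genuine gap, and your proposed route to fill it would not work as stated.

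Specifically, you suggest establishing the base case (a single $d$-cube plus $2^{d-1}-d$ added edges) via the Cone and Contraction Lemmas ``in the spirit of Lemmas~\ref{lem:cube-edges in opposite facets} and \ref{lem:cube-edges in one facet}.'' That argument cannot simply be transposed from the $(2,d-1)$ case to the $(1,d)$ case: in Step~1 of Lemma~\ref{lem:cube-edges in opposite facets}, vertices $t,v\in A$ have exactly $d-1$ common neighbors, which satisfies the Contraction Lemma's threshold $|C|\geq l$ for $l=d-1$ but not for $l=d$. The paper therefore abandons the Cone/Contraction route entirely for the $(1,d)$ version, and instead proves the base case by a direct counting verification of the $(1,d)$-Laman condition (Lemmas~\ref{lem:(d-1)-cube} and \ref{lem:d-cube}). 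The concrete ingredient you are missing is the recursive distribution scheme: to add $2^{d-1}-d$ edges to the $(d-1)$-cube facet $F$, pick two opposite subfacets $F',F''$ of $F$, add $2^{d-2}-(d-1)$ edges inside each (making each $(1,d-1)$-Laman by induction on $d$), and add $d-2$ arbitrary bipartite edges between $F'$ and $F''$; the Laman inequality for any induced subgraph then follows from the induction hypothesis plus a bound of $|A'|+|A''|$ on the number of original $F'\leftrightarrow F''$ edges plus $d-2$ for the added ones. This explicit construction and the accompanying count are what make the proof go through, and they are absent from your proposal.
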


The same proof as in Proposition \ref{prop:stacked cubes rigidity} shows that
the following two lemmas imply
Proposition \ref{prop:stacked cubes (1,d)-rigidity}.

\begin{lem} \label{lem:(d-1)-cube}
For $d\geq 4$, it is possible to add  $2^{d-1}-d$ edges to the graph of a
 $(d-1)$-cube so that the resulting graph is $(1,d)$-Laman.
\end{lem}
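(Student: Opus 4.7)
The plan is to show that $C^{d-1}$ is already $(1,d)$-stress free and then invoke matroid augmentation to extend $E(C^{d-1})$ to a basis of the matroid $M$ represented by the rows of $R^{(1,d)}(K_{A,B})$; by Remark~\ref{rank(K_(n,m))} such a basis is the edge set of a $(1,d)$-minimal graph, which is $(1,d)$-Laman by Proposition~\ref{prop:Laman--necessary}. The only substantive step is the stress-free check for $C^{d-1}$, which reduces to the elementary bound $\deg(a)=d-1<d$ for every $a\in A$; the rest is routine matroid augmentation, so I do not anticipate any serious obstacle.

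First I would verify $(1,d)$-stress freeness of $C^{d-1}$ via Remark~\ref{geom-int}. Pick a generic $(1,d)$-embedding $\phi$ with $\phi(a)\in \R\times(0)$ for $a\in A$ and $\phi(b)=(0,\phi_d(b))\in (0)\times\R^d$ for $b\in B$, where each $\phi_d(b)\in\R^d$ is generic. For any weight assignment $w=(w_{ab})$ satisfying (\ref{e:iii-bip}), the equilibrium at each $a\in A$ reads
\[
\sum_{b\sim a} w_{ab}\,\phi_d(b)=0.
\]
Since every vertex of $C^{d-1}$ has exactly $d-1$ neighbors on the opposite side and any $d-1$ generic vectors in $\R^d$ are linearly independent, this forces $w_{ab}=0$ for every edge $ab$ incident to $a$; ranging over all $a\in A$ kills $w$ entirely, so $E(C^{d-1})$ is independent in $M$.

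Finally, for $d\geq 4$ we have $|A|=|B|=2^{d-2}\geq d$, so $K_{A,B}^{b}=K_{A,B}$ makes $K_{A,B}$ $(1,d)$-rigid, and by Proposition~\ref{rigid/stressfree-criterion} the matroid $M$ has rank $d|A|+|B|-d=(d+1)2^{d-2}-d$. Standard matroid augmentation then extends $E(C^{d-1})$ to a basis by adjoining exactly $(d+1)2^{d-2}-d-(d-1)2^{d-2}=2^{d-1}-d$ bipartite edges from $K_{A,B}\setminus C^{d-1}$. The resulting graph is $(1,d)$-minimal, and hence $(1,d)$-Laman by Proposition~\ref{prop:Laman--necessary}.
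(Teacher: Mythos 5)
Your proposal is correct, but it takes a genuinely different route from the paper's proof. The paper proceeds by an explicit recursive construction: for $d=4$ it adds the four long diagonals of the $3$-cube to get $K_{4,4}$, and for $d>4$ it splits the $(d-1)$-cube into two opposite $(d-2)$-cube facets, recursively makes each $(1,d-1)$-Laman, and then adds $d-2$ cross edges; the $(1,d)$-Laman counting condition is then verified by a direct inequality chase over all induced subgraphs. Your proof instead exploits the matroid framework set up in Remark~\ref{rank(K_(n,m))} and Proposition~\ref{prop:Laman--necessary}: you observe that the $(d-1)$-cube is $(1,d)$-stress free because every $a\in A$ has degree $d-1<d=l$ (equivalently, iterate the Deletion Lemma on the $A$-vertices), that the order is $(1,d)$-admissible since $|B|=2^{d-2}\geq d$ for $d\geq4$, and that matroid augmentation inside the $(1,d)$-rigidity matroid of $K_{A,B}$ (whose rank is $d|A|+|B|-d=(d+1)2^{d-2}-d$) extends $E(C^{d-1})$ to a basis by adjoining exactly $2^{d-1}-d$ edges; a basis is $(1,d)$-minimal, hence $(1,d)$-Laman. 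Your argument is shorter and purely existential, and it avoids the explicit Laman counting; the paper's construction is more explicit (it says exactly which edges to add) but that extra information is not actually used in Lemma~\ref{lem:d-cube} or Proposition~\ref{prop:stacked cubes (1,d)-rigidity}, which only invoke existence. Both proofs are valid.
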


\begin{proof}
The proof is by induction on $d$. For $d=4$, we need to add  $2^3-4=4$
edges to a $3$-cube. Adding all long diagonals (there are exactly 4 of them)
results in $K_{4,4}$, which is easily checked to be $(1,4)$-Laman.

For the inductive step, consider two opposite facets $F'$ and $F''$
of a $(d-1)$-cube $P$, and assume that we can add  $2^{d-2}-(d-1)$ edges
to the graph $G(F')$ of $F'$ so that the resulting
graph is $(1,d-1)$-Laman, and the same amount
of edges to the graph $G(F'')$ of $F''$ so that the resulting graph
 is $(1,d-1)$-Laman. Also add
$d-2$ arbitrary ``bipartite'' edges that go between $F'$ and $F''$.
Thus, the total number of added edges is $2(2^{d-2}-(d-1))+(d-2)=2^{d-1}-d$.
We claim that the graph of $P$ with all the added edges is $(1,d)$-Laman.
And indeed, for any subgraph $G=((A'\cup A'')\uplus( B'\cup B''),  E)$ of this graph,
where $A'\cup B'$ is a subset of the vertex set of $F'$ and  $A''\cup B''$
of $F''$, and where we denote by $E(C,D)$ the set of edges connecting $C$ to $D$,
we have
\begin{eqnarray*}
 |E|&=& |E(A',B')|+|E(A'',B'')|+ |E(A',B'')|+|E(A'', B')|\\
 &\leq& [(d-1)|A'|+|B'|-(d-1)]+[(d-1)|A''|+|B''|-(d-1)] \\
 && \qquad + [|A'|+|A''|]+(d-2)\\
&=& d|A'\cup A''|+|B'\cup B''| -d.
\end{eqnarray*}
Some explanation is in order:
 in the second step, the first two summands follow from the inductive
hypothesis, the third summand, $|A'|+|A''|$, is implied by the fact that in the
original graph of $P$, for each vertex of $A'$ there is at most one
edge from this vertex to $B''$, and, similarly, for each vertex of
$A''$ there is at most one edge to $B'$; finally,
the $d-2$ added edges between the two facets contribute the last summand.
Further, in the above inequality,  equality holds when considering the entire
graph, and so this graph is $(1,d)$-Laman.
\end{proof}

\begin{lem} \label{lem:d-cube}
For $d\geq 4$, it is possible to add  $2^{d-1}-d$ edges to the graph of a
 $d$-cube, all of them in one facet, so that the resulting graph is
$(1,d)$-Laman.
\end{lem}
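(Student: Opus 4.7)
The plan is to show that $G^+ := G(C^d) \cup S$ (where $S$ is the set of $2^{d-1}-d$ edges to be added inside the chosen facet $F$) is simultaneously $(1,d)$-rigid and has exactly $d|A|+|B|-d$ edges. By Proposition \ref{rigid/stressfree-criterion} this forces $G^+$ to be $(1,d)$-stress free as well, hence $(1,d)$-minimal, and so $(1,d)$-Laman by Proposition \ref{prop:Laman--necessary}. In particular, I will not need to verify the hereditary inequality in the definition of $(1,d)$-Laman directly.

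First I would take $S$ to be the set of $2^{d-1}-d$ edges provided by Lemma \ref{lem:(d-1)-cube}, applied to the $(d-1)$-cube $F$; then $G^*(F) := G(F) \cup S$ is $(1,d)$-Laman on $V(F) = A' \uplus B'$ (note $|A'| = |B'| = 2^{d-2} \geq d$ for $d \geq 4$, so the condition on side sizes holds). By Theorem \ref{thm:(l,1)-Laman}, $G^*(F)$ is $(1,d)$-minimal, hence $(1,d)$-rigid on $V(F)$.

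Next I would enlarge $G^*(F)$ to $G^+$ one vertex at a time, in the order: first all vertices of $B'' := B \cap F^*$, then all vertices of $A'' := A \cap F^*$, each time adjoining all edges of $G^+$ incident to the new vertex and old ones. Under the perfect matching $M$ between $F$ and $F^*$, parity considerations give $B'' \leftrightarrow A'$ and $A'' \leftrightarrow B'$. Therefore when a vertex $b \in B''$ is added, its only neighbor in the current graph is its match in $A'$, so its degree equals $1$, which meets the threshold $k = 1$ for $B$-vertices in the $(1,d)$ case. Similarly, once all of $B''$ has been added, each $a \in A''$ has its $d-1$ neighbors in $B''$ (coming from $G(F^*)$) together with its unique matching partner in $B'$ already present, giving degree exactly $d$, which meets the threshold $l = d$ for $A$-vertices. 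So each addition preserves $(1,d)$-rigidity by part 2 of the Deletion Lemma (Lemma \ref{lem:bipDeletion}), and after all vertices are added $G^+$ is $(1,d)$-rigid on $V(C^d)$.

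Finally I would check the edge count: $|E(G^+)| = d \cdot 2^{d-1} + (2^{d-1}-d) = (d+1)2^{d-1} - d = d|A|+|B|-d$, which matches the rank of the complete bipartite rigidity matrix, so $G^+$ is $(1,d)$-stress free by Proposition \ref{rigid/stressfree-criterion}, completing the proof. The only delicate ingredient is the parity bookkeeping: one must verify the matching directions across $F$ and $F^*$ and the degree-in-$G(F^*)$ count so that the ``$B''$-first, $A''$-second'' ordering supplies precisely the threshold degrees demanded by the Deletion Lemma. Everything else is essentially bookkeeping once Lemma \ref{lem:(d-1)-cube} and Theorem \ref{thm:(l,1)-Laman} are in hand.
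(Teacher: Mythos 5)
Your proof is correct, but it takes a genuinely different route from the paper's. The paper verifies the $(1,d)$-Laman inequality \emph{directly}: for an arbitrary induced subgraph on $(A'\cup A'')\uplus(B'\cup B'')$ it bounds the edge count by $|E(A',B')| + d|A''| + |B''|$ (using $d$-regularity of $C^d$ and the fact that each vertex of $B''$ sends at most one edge to $A'$) and then applies Lemma~\ref{lem:(d-1)-cube} to close the estimate, with equality for the whole graph; Whiteley's theorem is not invoked at all at this step. You instead establish $(1,d)$-\emph{rigidity} by starting from $G^*(F)$ (obtained from Lemma~\ref{lem:(d-1)-cube} and upgraded to $(1,d)$-rigid via Theorem~\ref{thm:(l,1)-Laman}), then growing to $G^+$ vertex by vertex --- all of $B''$ first at degree $1 = k$, then all of $A''$ at degree $d = l$ --- using the Deletion Lemma at each step; the total edge count $(d+1)2^{d-1}-d = d|A|+|B|-d$ then gives $(1,d)$-minimality by Proposition~\ref{rigid/stressfree-criterion}, hence $(1,d)$-Laman by Proposition~\ref{prop:Laman--necessary}. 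Your parity bookkeeping ($B''$ matches into $A'$, and each $a\in A''$ has exactly $d-1$ neighbors in $B''$ plus its unique matching partner in $B'$) is correct, and the side-size condition $2^{d-2}\geq d$ for $d\geq 4$ holds. The paper's route is leaner and purely combinatorial; yours is more structural and directly exhibits the rigidity, at the cost of leaning on Theorem~\ref{thm:(l,1)-Laman} twice (once via Lemma~\ref{lem:(d-1)-cube}, once to convert Laman to minimal) --- a mild circularity of reliance, though logically sound since that theorem is proved independently.
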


\begin{proof}
Let $P$ be a $d$-cube, and let $F'$ and $F''$ be a pair of opposite facets of
$P$. 
Using Lemma \ref{lem:(d-1)-cube}, add $2^{d-1}-d$ edges to the graph of
$F'$ to make it $(1,d)$-Laman. We claim that
the graph of $P$ together with these added edges is $(1,d)$-Laman.
And indeed, for any subgraph
$G=((A'\cup A'')\uplus(B'\cup B''), E)$ of the resulting graph,
\[
|E| \leq |E(A',B')| + d|A''| + |B''|  \leq d|A'\cup A''|+|B'\cup B''| -d,
\]
where in the first inequality the summand $d|A''|$ is justified by the fact
that each vertex of $A''$ has degree $d$, and hence cannot contribute more
than $d$ edges, while the summand $|B''|$ is explained by the fact that there
is at most one edge from each vertex of $B''$ to $A'$; the second inequality is by
Lemma \ref{lem:(d-1)-cube}. Further, in the above inequality, equality holds
when considering the entire graph, and hence this graph is $(1,d)$-Laman.
\end{proof}

\subsection{Dual to balanced polytopes}
For relevant terminology on simplicial complexes used below,
see Section \ref{sec:PrelimComplexes}.

Recall that the facet-ridge graph of a pure simplicial complex $K$ is the
graph whose vertices are facets of $K$, and two facets are connected
by an edge if they share a common ridge. Recall also that a {\em combinatorial
manifold} (without boundary) of dimension $d-1$
is a simplicial complex whose geometric realization is a $(d-1)$-manifold
with an additional restriction that all vertex links are piecewise linear homeomorphic
to the boundary of a $(d-1)$-simplex.

Let $K$ be a $(d-1)$-dimensional simplicial complex; assume further that
$K$ is a combinatorial manifold with a trivial
fundamental group.  According to Joswig \cite{Joswig2002},
the facet-ridge graph of $K$ is bipartite if and only if $K$ is balanced.
(For $d=3$ this is a classic result by Ore;
for $d=4$ this result goes back to Goodman and
Onishi \cite{GoodmanOnishi},
and to the unpublished work of Deligne, Edwards, MacPherson, and  Morgan.)
In particular, if $P$ is a balanced simplicial polytope and $P^*$
is a polytope dual to $P$, then the graph of $P^*$ is bipartite.
This graph is also $d$-regular, and hence has $f_{d-1}(K)$ vertices and
$d f_{d-1}(K)/2$ edges.

\begin{prob}\label{prob:JoswigRelated}
Fix $d\geq 3$.
Let $K$ be a $(d-1)$-dimensional balanced simplicial complex
and assume that $K$ is a combinatorial manifold
(without boundary) with a trivial fundamental group. Let $G$ be
the facet-ridge graph of $K$.
\begin{enumerate}
\item Is this graph $(\lfloor (d+1)/2\rfloor, \lceil (d+1)/2 \rceil)$-stress free?
\item
Assume further that $K$ has no missing facets.
Is $G$ a $(k, d-k)$-rigid graph for $1\leq k\leq d-1$?
\end{enumerate}
\end{prob}

We start with Part (1).
As the only 2-dimensional manifold with a trivial fundamental group is a sphere,
and as the facet-ridge graph of a 2-dimensional simplicial sphere is planar,
it follows from Theorem \ref{theo:BipartiteGluck} that the answer to
Problem \ref{prob:JoswigRelated}(1) is positive in the case of $d=3$.
Also it is well-known and easy to prove by induction on dimension
(by considering vertex links) that the number of facets of a
balanced $(d-1)$-dimensional manifold (without boundary) is at
least $2^d$, for all $d$.
Since $2^{d-1}\geq \lfloor (d+1)/2\rfloor \lceil (d+1)/2 \rceil$,
at least the inequality on the number of edges of $G$ implied
by Problem \ref{prob:JoswigRelated}(1) does hold for all values of $d$.

Next we discuss Part (2).
First, to see that the condition in Part (2) is necessary,
let $d\geq 4$ and consider $2d$ copies of the boundary complex of the
$d$-dimensional cross polytope $C^*$ (with a natural $d$-coloring).
We label these copies by $\partial C^*_0,
\partial C^*_1,\ldots, \partial C^*_{2d-1}$.
As the facet-ridge graph of $\partial C^*_i$ is bipartite,
we can refer to facets of $\partial C^*_i$ as belonging to
either side $A$ or side $B$ of this graph.
Pick $2d-1$ facets $H'_1,\ldots, H'_{2d-1}$ of $\partial C^*_0$
that belong to side $B$ (this is possible as there are $2^{d-1}$
such facets in total), and for each $i=1,\ldots, 2d-1$, pick a facet
$H_i$ of $\partial C^*_i$ that belongs to side $A$. Now,
for each $i=1,\ldots, 2d-1$ glue the complex $\partial C^*_i$
onto $\partial C^*_0$ by identifying each vertex of $H_i$ with
the same color vertex of $H'_i$, and removing
the resulting common facet. Denote the complex obtained in this
way by $K$. Thus $K$ is balanced. In fact, $K$ is isomorphic
to the boundary complex of a balanced simplicial $d$-polytope.

Let $G$ be the facet-ridge graph of $K$. Observe that for each $0\leq i \leq 2d-1$,
$\partial C^*_i$ has $2^d$ facets for the total number of $2d\cdot 2^d$ facets.
Since each gluing described above reduces the total number of facets by $2$, the complex
$K$ has $2d\cdot 2^d-2(2d-1)$ facets.
Hence $G$ has $d\cdot2^d-(2d-1)$ vertices on each side.
By $d$-regularity of $G$, we conclude that $G$ has $d^2\cdot 2^d -2d^2+d$ edges.
Note also that according to Proposition \ref{rigid/stressfree-criterion},
a $(1,d-1)$-minimal graph on the same vertex set as $G$ has
$d(d\cdot2^d-(2d-1))-(d-1)=d^2\cdot 2^d-2d^2+1$ edges.

 We claim that $G$ is not
$(1,d-1)$-rigid. Indeed, if $G$ were $(1, d-1)$-rigid  there would be a way to
delete $d-1$ edges of $G$ (corresponding to the ridges of $K$)
so that the resulting graph $G'$ is $(1, d-1)$-minimal,
and hence by Proposition \ref{prop:Laman--necessary}, $(1, d-1)$-Laman.
However, since each ridge belongs to only two facets, these deletions
affect at most $2(d-1)$ of our cross polytopes; in other words, for some
$1\leq i_0\leq 2d-1$, no ridges of $\partial C^*_{i_0}-\{H_{i_0}\}$ are deleted.
The subgraph of $G'$ induced by the facets of
$\partial C^*_{i_0}-\{H_{i_0}\}=A'\cup B'$ violates the $(1, d-1)$-Laman
condition: since $|A'|=2^{d-1}-1$ and $|B'|=2^{d-1}$,
the number of edges in this subgraph is
$d 2^{d-1}-d$ while $(d-1)|A'|+|B'|-(d-1)=d 2^{d-1}-2(d-1)$.
A similar construction works for every $k=1,\ldots, d-1$,
as well as for $d=3$ (where more copies of $C^*$ are glued together).

Note that if a bipartite graph $G=(A\cup B, E)$ is $(k,d-k)$-rigid then
it has at least $(d-k)|A|+k|B|-k(d-k)$ edges --- a quantity that is smaller than
the number of edges a $d$-regular bipartite graph $G$ has. 
Thus at least the inequality on the number of edges of $G$ implied
by Problem \ref{prob:JoswigRelated}(2) does hold for all $1\leq k<d$.

We now give a positive answer to Problem \ref{prob:JoswigRelated}(2)
for the case of $d=3$.

\begin{prop}\label{prop:Joswig(1,d-1)}
Let $K$ be a a balanced $2$-dimensional simplicial sphere without missing triangles,
and let $G$ be the facet-ridge graph of $K$. Then $G$ is $(1,2)$-rigid.
\end{prop}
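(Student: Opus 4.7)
The plan is to reformulate the claim as a combinatorial sparsity condition and verify it directly from the hypothesis. Since $K$ is a balanced $2$-sphere, Joswig's theorem (recalled in this section) gives that $G$ is bipartite; further, $G$ is $3$-regular (each triangle of $K$ has three ridges) and planar (it embeds in the $2$-sphere carrying $K$). Writing $|A|=|B|=n$, the key use of the ``no missing triangles'' hypothesis will be the translation
\[ K \text{ has no missing triangles} \iff G \text{ is cyclically }4\text{-edge-connected,} \]
that is, $G$ has no $3$-edge cut with at least two vertices on each side. Indeed, a $3$-cycle $uvw$ in the $1$-skeleton of $K$ that does not bound a $2$-face splits the $2$-sphere into two triangulated disks each containing at least two triangles of $K$; the three edges $uv, vw, uw$ are each shared by one triangle from each disk, yielding a non-trivial $3$-edge cut of $G$. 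The converse is similar.

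By Theorem \ref{thm:(l,1)-Laman}, together with the evident symmetry of the rigidity framework under swapping the two sides of the bipartition (which replaces the hypothesis $l=1$ with $k=1$), it suffices to produce a spanning subgraph $G'\subseteq G$ that is $(1,2)$-Laman; then $G'$ is $(1,2)$-rigid, and hence so is the supergraph $G$ by Remark \ref{rank(K_(n,m))}. I propose $G':=G-\{e_1,e_2\}$, where $e_1,e_2$ are any two edges with \emph{distinct} $A$-endpoints (such a pair exists since $n\geq 4$: the octahedron gives the smallest balanced $2$-sphere). Then $|E(G')|=3n-2=2|A|+|B|-2$, which settles Laman condition (i).

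For Laman condition (ii), namely $|E(G'[V'])|\leq 2|A'|+|B'|-2$ for every induced subgraph on $V'=A'\uplus B'$ with $|A'|\geq 1$ and $|B'|\geq 2$, I split the verification by the size of $V\setminus V'$. The case $V'=V$ is the equality just noted. When $V\setminus V'=\{v\}$ with $v\in B$, the inequality reduces to $3n-3\leq 3n-3$; when $v\in A$, it becomes the requirement that at least one of $e_1,e_2$ lie in $V'$, which is precisely what the distinct-$A$-endpoint choice guarantees. When $|V\setminus V'|\geq 2$, cyclic $4$-edge-connectivity combined with $3$-regularity forces $|E_G(V',V\setminus V')|\geq 4$ (a short degree count shows that every $3$-edge cut in a cubic graph must isolate a single vertex), so
\[ 2\,|E_G(V')| = 3|V'|-|E_G(V',V\setminus V')|\leq 3(|A'|+|B'|)-4, \]
and a short subcase analysis comparing this with $2|A'|+|B'|-2$ (split by whether $|B'|\leq|A'|$, $|B'|=|A'|+1$, or $|B'|\geq|A'|+2$, and using the cruder bound $|E_G(V')|\leq 3|A'|$ from the $A'$-side degree in the last case) completes the check.

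The step I expect to demand the most care is the first paragraph's translation of ``no missing triangles'' into cyclic $4$-edge-connectivity of $G$, which is where the hypothesis enters in an essential way; once this is in hand, the remainder is a short counting argument and no induction or structural reduction on the class of balanced $2$-spheres is required.
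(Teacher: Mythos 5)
Your proof is essentially correct and follows the same high-level strategy as the paper: reduce via Theorem \ref{thm:(l,1)-Laman} (and the side-swap symmetry, which you rightly note) to exhibiting a two-edge-deletion $G'$ that is $(1,2)$-Laman, then verify the sparsity condition with a degree count. What you do differently is to extract the hypothesis ``no missing triangles'' into a single graph-theoretic statement --- cyclic $4$-edge-connectivity of $G$ --- and then let it do all the work in the case $|V\setminus V'|\ge 2$; the paper instead uses the topological input inline, in its case $|B'|=|A'|+1$, arguing that $\partial(K')$ is a union of cycles, hence a triangle, which must bound a facet $K''$. Your reorganization by $|V\setminus V'|$ confines the role of the two deleted edges to $|V\setminus V'|\le 1$, which is why you can get away with requiring only distinct $A$-endpoints for $e_1,e_2$ whereas the paper chooses them disjoint. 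The arithmetic in your three subcases checks out (in the $|B'|=|A'|+1$ case the parity of $2|E_G(V')|$ combined with the cut bound gives exactly $|E_G(V')|\le 3|A'|-1$).

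The one place you should flesh out is the equivalence you yourself flag as the delicate step. The direction you actually sketch (``missing triangle $\Rightarrow$ cyclic $3$-edge cut'') is the \emph{easy} direction, but the direction you use is ``cyclic $3$-edge cut $\Rightarrow$ missing triangle,'' and ``the converse is similar'' undersells it: one must observe that the three cut-ridges $r_1,r_2,r_3$ form the $\mathbb{Z}/2$-boundary of $\sum_{\sigma\in \mathcal{F}_1}\sigma$, hence a $1$-cycle, and a $1$-cycle on three edges in a simple graph is a triangle $T$ of $K$; if $T$ were a facet, it would be isolated on its side of the cut (all three of its neighbours lie across the cut), contradicting that each side is connected with at least two facets (which in turn uses that $G$ is $3$-edge-connected, a fact you should cite --- it is the graph of a simple $3$-polytope). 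This is precisely the paper's case-4 argument, factored out into a lemma. Finally, the parenthetical ``every $3$-edge cut in a cubic graph must isolate a single vertex'' is false as stated for general cubic graphs; what the degree count shows is that a $3$-edge cut in a cubic graph either isolates a vertex or separates two cycles (parity forces $|V'|$ odd, and $|V'|\ge 3$ forces $|E_G(V')|\ge|V'|$), and it is cyclic $4$-edge-connectivity that rules out the second alternative.
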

\begin{proof}
By Theorem \ref{thm:(l,1)-Laman}, it suffices to show that
$G$ has a subgraph $G''=G-\{ab,a'b'\}$ with the following property:
for every proper subset $V'=A'\uplus B'\subsetneq V(G)$, with both
$A'\subseteq A$ and $B'\subseteq B$
nonempty, the induced subgraph $G''[V']$ has at most $2|A'|+|B'|-2$ edges.
Let $e'=|E(G[V'])|$ and $e''=|E(G''[V'])|$. Thus, $e''\leq e'$, and
we need to prove that $e''\leq 2|A'|+|B'|-2$.
There are the following four cases to consider.
(The deletion of the two edges from $G$ is used only in the last
case, and is described there.)

\smallskip\noindent{1.}
If $|B'|\geq |A'|+2$, then by $3$-regularity of $G$,
$e'\leq 3|A'|\leq 2|A'|+|B'|-2$, and the result follows.

\smallskip\noindent{2.}
If $|B'|\leq |A'|-1$, then by $3$-regularity of $G$,
$e'\leq 3|B'|\leq 2|A'|+|B'|-2$, and the result follows.

\smallskip\noindent{3.}
If $|A'|=|B'|$, then by $3$-regularity and connectivity of $G$,
$e'\leq 3|A'| -1 =2|A'|+|B'|-1$, and so the only remaining subcase
here is the case of $G[V']$ being a $3$-regular
graph minus an edge. Then $G[V-V']$ is also a $3$-regular
graph minus an edge. Hence $G$ is the union of these two disjoint
induced subgraphs plus \emph{two} additional edges.
This however contradicts the fact that $G$ is a $3$-vertex
connected graph (indeed, $G$ is a graph of a simple
$3$-dimensional polytope), and hence also a $3$-edge connected graph.

\smallskip\noindent{4.}
 If $|B'|=|A'|+1$, then by $3$-regularity of $G$,
$e'\leq 3|A'|=2|A'|+|B'|-1$, and hence the only remaining subcase
here is the case of $e'= 3|A'|$. In this case
all neighbors of $A'$ are in $B'$, hence all neighbors of $B-B'$ are in
$A-A'$, which means that $G$ is the union of $G[V']$ and $G[V-V']$
plus three additional edges $e_1,e_2,e_3$ that connect $B'$ with $A-A'$.

On the level of our complex $K$, this means that $K$ is the union of
two $2$-dimensional subcomplexes $K'$ and $K''$, corresponding to the
graphs $G[V']$ and $G[V-V']$, respectively, and their common boundary,
$\partial(K')$, consists of $e_1$, $e_2$, and $e_3$. However, as
$\partial(K')$ is a union of cycles, it follows that the edges
$e_1,e_2,e_3$ form a cycle, and this cycle must not be a missing triangle
in $K$ by assumption. As both $A'$ and $B'$ are nonempty,
we infer that $K''$ is a single triangle, and as $|B'|=|A'|+1$,
this triangle belongs to side $A$.
We can choose the edges $ab$ and $a'b'$ to be disjoint. Then not
both of them belong to $\partial(K')$, and so at least one of them
is in $G[V']$. Hence,
$e''\leq e'-1 \leq 2|A'|+|B'|-2$, and the result follows.
\end{proof}

\section{Preliminaries on simplicial complexes}\label{sec:PrelimComplexes}
First, we recall some basic definitions related to simplicial complexes,
to be used in Section \ref{sec:balancedKalai-Sarkaria}.
For further background see, for instance, \cite{Munkres}. Next, we motivate the
questions considered in Section \ref{sec:balancedKalai-Sarkaria}.

A \emph{simplicial complex} $K$ on the \emph{vertex set} $V$
is a collections of subsets of $V$ such that
(i) $\{v\}\in K$ for all $v\in V$, and
(ii) if $G\subset F$ and $F\in K$, then $G\in K$. The elements of $K$
are called {\em faces}. In particular, the empty set is a face of $K$.
A set $F\subseteq V$ that is not a face of $K$ but all of whose
proper subsets are faces of $K$ is called a \emph{missing face} of $K$.

For a simplicial complex $K$ and a face $\sigma$ of $K$, the
\emph{antistar} of $\sigma$ in $K$  is the subcomplex of $K$
given by $\antist_K(\sigma)=\{\tau\in K:\ \sigma\nsubseteq \tau\}$, and the
\emph{link} of $\sigma$ is the subcomplex $\lk_K(\sigma)=\{\tau\in K:\
\sigma\cap \tau=\emptyset, \ \sigma\cup \tau\in K\}$.
The \emph{join} of two simplicial complexes $K$ and $L$ on disjoint
vertex sets is $K*L=\{\sigma\cup\tau:\ \sigma\in K,\ \tau\in L\}$.
For instance, $[3]*[3]$ is simply $K_{3,3}$, where $[3]$
denotes the complex consisting of three isolated vertices.

The \emph{dimension} of a face $\sigma$ is defined by
$\dim(\sigma):=|\sigma|-1$; the dimension of a simplicial complex
$K$ is defined by $\dim(K):=\max\{ \dim(\sigma) \ : \ \sigma\in K\}$.
If all maximal (w.r.t.~containment) faces of $K$ have the same dimension,
then $K$ is \emph{pure}; the top-dimensional faces of $K$ are called
\emph{facets} and faces of codimension-$1$ are called \emph{ridges}.
For instance, the collection of all subsets of $[n]$
of size at most $i$ forms a pure $(i-1)$-dimensional
simplicial complex that we denote by $\binom{[n]}{\leq i}$.

If the vertices of $K$ can be colored by $\dim(K)+1$ colors
in such a way that the vertices of any edge of $K$ receive
different colors, then $K$ is \emph{balanced}.
For example, bipartite graphs are balanced $1$-dimensional complexes.
When discussing a balanced complex $K$, we assume that its vertex set $V$
is endowed with such a coloring: $V=V_1\uplus\cdots\uplus V_d$, where $d=\dim(K)+1$.
In this situation, for $T=\{i_1,\ldots,i_t\} \subseteq [d]$, we denote by $K_T$
the restriction of $K$ to the vertex set $V_{i_1}\uplus \cdots \uplus V_{i_t}$.

As in the case of graphs, for a simplicial complex $K$ on $V$ one can
define the \emph{Stanley-Reisner ring} of $K$. To do so, consider
a variable $x_v$ for every vertex $v\in V$. Let $S=\R[x_v \ :  v\in V]$,
let $I_K$ be the ideal of $S$ generated by squarefree monomials
corresponding to non-faces of $K$, and let $\R[K]:=S/I_K$.

Also, as in the case of bipartite graphs,
for a \emph{balanced} $(d-1)$-dimensional simplicial complex $K$ on
$V=V_1\uplus\cdots \uplus V_d$, where $V_i$ denotes the set of vertices of
color $i$, we can use the Stanley-Reisner ring of $K$ to define
a balanced shifting of $K$, $K^b$.
To do so, one needs a total order $<$ on $V$ that
extends given total orders on each of $V_1,\ldots,V_d$, as well as a block matrix
$\Theta=\Theta_1\times\cdots\times \Theta_d \in
\GL_{|V_1|}(\R)\times\cdots\times \GL_{|V_d|}(\R)$,
where $\Theta_1,\ldots,\Theta_d$ are generic matrices.
The rest of the definition is analogous to that for graphs:
for $v\in V_i$, set $\deg x_v:=e_i\in\Z^d$
(here $e_i$ is the $i$th unit vector); this makes $\R[K]$
into a $\Z^d$-graded ring. Now, for each $T\subseteq [d]$, let
$e_T=\sum_{i\in T}e_i\in\Z^d$,
and define $\B_T$ to be the greedy lexicographic basis (w.r.t.~$<$) of
the vector space $\R[K]_{e_T}$ chosen from the monomials written in
$\theta$'s. Let $\B=\cup_{T\subseteq [d]} \B_T$. Finally, define
$K^b$ as a collection of subsets of $V$ that are supports
of monomials from $\B$. It is shown in \cite{Babson-Novik} that
$K^b$ is a balanced simplicial complex; it has the same $f$-vector as $K$;
moreover, $K^b$ is balanced-shifted: if $v\in F\in K^b$ and
$w<v$ is a vertex of the same color as $v$, then
$F\setminus\{v\}\cup\{w\}\in K^b$.

We say that the order $<$ on $V=V_1\uplus \cdots \uplus V_d$
used to compute $K^b$ is \emph{$(l,l,\ldots,l)$-admissible}
if the least $l$ vertices from each colorset form an
initial segment of $<$.

Recall that by Euler's formula, a planar graph with $n\geq 3$ vertices
has at most $3n-6$ edges, and equality holds for the $1$-dimensional
skeleton of any triangulated $2$-sphere. Conjecture \ref{c:ksw} posits
an analogous statement for $2$-dimensional complexes embeddable in $\R^4$.
What happens in higher dimensions?

For a simplicial complex $K$, let $f_i(K)$ be the number of
$i$-dimensional faces (\emph{$i$-faces}) of $K$, and let $f(K)$
be the \emph{$f$-vector} of $K$,
 namely, $f(K):=(f_{-1}(K),f_0(K),\ldots,f_{\dim(K)}(K))$.
It follows from the Dehn-Sommerville relations \cite{Klee:DS}
and the generalized lower bound theorem \cite{St} that if $K$
is a $2d$-dimensional simplicial sphere that is the boundary
of a polytope, then $f_d(K)$ is linear in $f_{d-1}(K)$.
Is it true that for \emph{any}
$d$-dimensional complex $K$ topologically embeddable in the $2d$-sphere,
$f_d(K)$ is at most linear in $f_{d-1}(K)$? (That is, is there
some constant $c(d)$, depending only on $d$, such that $f_d(K)/f_{d-1}(K)\leq c(d)$
for all relevant $K$?) We consider this question
in the next section; we refer there to such inequality as
\emph{Euler-type upper bound} inequality.

\section{Balanced complexes, Euler-type upper bounds, and the Kalai--Sarkaria
conjecture}\label{sec:balancedKalai-Sarkaria}

In this section we discuss a balanced analog of (a part of)
the Kalai--Sarkaria conjecture, potential applications of this conjecture
and possible approaches to attack it.
Our starting point is the following conjecture of
Kalai and Sarkaria \cite{Kalai-AlgShifting} that
implies McMullen's $g$-conjecture for simplicial spheres \cite{McMullen-g-conj}.
We let $C(d,n)$ denote the cyclic $d$-polytope with $n$ vertices, $\partial(C(d,n))$
stands for the boundary complex of $C(d,n)$, and $\S^d$ denotes the $d$-dimensional
sphere. Finally, for a simplicial complex $K$, $K^s$ denotes
the symmetric algebraic shifting of $K$.

\begin{conj}\label{conj:Kalai-Sarkaria}
Let $L$ be a simplicial complex with $n$ vertices.
If $L$ is topologically embeddable in $\S^{d-1}$, then
$L^s\subseteq (\partial(C(d,n)))^s$.
In particular, if $K$ is a $d$-dimensional complex embeddable
in $\S^{2d}$, then $K^s$ does not contain the Flores
complex $\binom{[2d+3]}{\leq d+1}$.
\end{conj}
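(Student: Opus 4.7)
The plan is to extend the algebraic-shifting/rigidity paradigm of this paper from dimension one (graphs) to higher dimensions. First observe that the ``In particular'' clause reduces to the main containment statement: applying $L^s \subseteq (\partial C(d,n))^s$ to a $d$-dimensional $K$ embedded in $\S^{2d}$, with $d$ replaced by $2d+1$, yields $K^s \subseteq (\partial C(2d+1,n))^s$. The Flores complex $\binom{[2d+3]}{\leq d+1}$ is the $d$-skeleton of the $(2d+2)$-simplex, so checking its absence amounts to verifying that its lex-last $d$-face does not appear in $(\partial C(2d+1,n))^s$. Since the algebraic shifting of cyclic polytope boundaries is explicitly described via Gale-evenness-type combinatorics, and since Dehn--Sommerville relations rule out the top-dimensional shifted faces of large lex-index, this is a finite combinatorial verification rather than a genuinely new topological input.

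For the main containment I would recast ``$F \notin L^s$'' as the assertion that a certain row in Kalai's symmetric $d$-rigidity (hyperconnectivity) matrix of $L$ is linearly independent of the preceding rows, so that $L^s \subseteq (\partial C(d,n))^s$ becomes a uniform rigidity-type statement: every $d$-face not occurring in the shifted cyclic boundary fails to occur in $L^s$ for all $L$ embeddable in $\S^{d-1}$. The case $d=2$ is Gluck's theorem (Propositions \ref{Prop1.3:Gluck} and \ref{prop:K_5}), the case $d=1$ is trivial, and the target statement for $d=3$ would be the analogue sitting between Conjectures \ref{c:ksw} and \ref{2-dimKS}.

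The tactical plan is induction on the number of vertices (and on $d$), using higher-dimensional symmetric analogs of the cone, deletion, contraction, and gluing lemmas of Section~3. The base case would be $L=\partial C(d,n)$ itself, verified by a direct computation of its shifting. For the inductive step, one wants to locate in any maximal triangulation $L$ of $\S^{d-1}$ either a vertex of sufficiently small link that the Deletion Lemma applies, or a pair of vertices whose identification preserves the embedding and whose common link is small enough to trigger a Contraction Lemma; in both cases one reduces to a smaller embedded complex. This parallels Whiteley's vertex-splitting proof of Gluck's theorem and the inductive argument used for Theorem~\ref{theo:BipartiteGluck}, via Lemma~\ref{lem:bipartiteContraction}.

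The hard part, and the reason the conjecture remains open, is exactly the contraction step in dimensions $d\geq 4$: unlike planar triangulations and planar bipartite quadrangulations, general triangulations of $\S^{d-1}$ need not admit an edge whose contraction is safe both topologically (preserving the sphere) and combinatorially (keeping the common-link size below the contraction threshold). A related obstacle is the absence in $d\geq 4$ of any satisfactory analog of the Cauchy/Dehn/Alexandrov rigidity theorem that anchored Gluck's argument. A realistic fallback, sufficient at least for the Flores half of the conjecture, is to combine Van Kampen--Flores type deleted-product obstructions with the stress-space/homological-minors machinery of \cite{Nevo:HigherMinors, Wagner11:HomologicalMinor}; this may prove the non-containment of $\binom{[2d+3]}{\leq d+1}$ in $K^s$ directly, bypassing the full containment in $(\partial C(2d+1,n))^s$.
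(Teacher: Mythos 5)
This statement is a \emph{conjecture}, not a theorem of the paper; the authors attribute it to Kalai and Sarkaria and explicitly note in Section~8 that it is open (and that their balanced analog, Conjecture~\ref{conj:balancedKalaiSarkaria}, is likewise known only for $d=0,1$). So there is no ``paper's own proof'' to compare against, and your text cannot be read as a proof either --- you yourself say ``the reason the conjecture remains open is exactly the contraction step in dimensions $d\geq 4$.'' What you have written is a survey of why the natural inductive attack gets stuck, not an argument that closes the statement.

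That said, your reductions are sound and worth recording. The ``in particular'' clause does reduce to the main containment (take $d\mapsto 2d+1$ and verify the single face $\{d+3,\dots,2d+3\}$ is absent from $(\partial C(2d+1,n))^s$; shiftedness then excludes the entire Flores complex), and recasting ``$F\notin L^s$'' as row-independence in the symmetric rigidity matrix so that the conjecture becomes a uniform rigidity statement is exactly the viewpoint the paper adopts in its bipartite Section~3 and its higher-dimensional Lemma~\ref{does_not_contain}. One small indexing slip: under the convention of the main containment (embedding in $\S^{d-1}$), Gluck's theorem and Proposition~\ref{prop:K_5} are the case $d=3$, not $d=2$; $d=2$ (graphs in $\S^1$) is degenerate.

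The genuine gap is the one you already flag, and it is not patchable by tactics visible in the paper. The inductive engine behind Theorem~\ref{theo:BipartiteGluck} is the existence, in every maximal planar bipartite quadrangulation, of a safe contraction pair (Lemma~\ref{lem:bipartiteContraction}); no analog is known for triangulations of $\S^{d-1}$ with $d\geq 4$, and without it the Deletion/Contraction induction has no step to take. Your proposed fallback --- van Kampen--Flores deleted products combined with homological minors --- is a reasonable research direction (and is cited by the paper as having produced \emph{partial} results on Conjecture~\ref{c:ksw}), but you do not show how it would yield the shifted-face non-containment; as written it is a pointer, not an argument. For the record, the paper's own response to the intractability you describe is precisely to pass to the balanced setting (Conjecture~\ref{conj:balancedKalaiSarkaria}), where bipartite/colored rigidity supplies a cone lemma, a deletion lemma (Lemma~\ref{lem:deletion--HighDim}), and join/subdivision stability (Lemmas~\ref{lem:join}, \ref{prop:sdLink}, \ref{prop:sdPseudomanifold}), even though the full balanced conjecture also remains open.
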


We posit the following bipartite analog of the ``in particular'' part:

\begin{conj}\label{conj:balancedKalaiSarkaria}
Let $K$ be a $d$-dimensional balanced complex that is topologically embeddable
in $\S^{2d}$, and let $<$ be a $(2,2,\ldots, 2)$-admissible order.
Then $K^{b, <}$ does not contain the
van Kampen complex $[3]^{*(d+1)}$, i.e., the
$(d+1)$-fold join of $3$ points.\footnote{The
statements of Conjectures \ref{conj:Kalai-Sarkaria}, \ref{2-dimKS},
and \ref{conj:balancedKalaiSarkaria} are also conjectured
to hold for the case of exterior shifting (balanced exterior shifting, resp.).
In fact, in an unpublished work, Nevo established the exterior shifting
counterpart of Proposition \ref{prop:K_5}.}
\end{conj}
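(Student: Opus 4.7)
The plan is to mirror the proof of Theorem \ref{theo:BipartiteGluck} (the $d=1$ case of the conjecture) in arbitrary dimension by building a balanced rigidity theory parallel to Section \ref{sec:(k,l)-rigidity} and then running an induction on dimension and on the number of vertices. The first step is the algebraic reformulation. Fix a $(2,\ldots,2)$-admissible order $<$ on $V=V_1\uplus\cdots\uplus V_{d+1}$, and for each color $j$ let $v^j_1<v^j_2<v^j_3$ denote the three smallest vertices in $V_j$. Balanced-shiftedness of $K^{b,<}$ reduces the containment $[3]^{*(d+1)}\subseteq K^{b,<}$ to the membership of the single face $\{v^1_3,\ldots,v^{d+1}_3\}$, and, in direct analogy with Lemma \ref{mapPhi}(2), this is equivalent to the surjectivity of the multiplication map
\[
\Phi_K:\bigoplus_{j=1}^{d+1}\bigl(\R[K]_{e_{[d+1]\setminus\{j\}}}\bigr)^{2}\longrightarrow\R[K]_{e_{[d+1]}},\qquad (f_j,g_j)_j\longmapsto\sum_{j=1}^{d+1}\bigl(\theta_{v^j_1}f_j+\theta_{v^j_2}g_j\bigr).
\]
Call $K$ \emph{balanced $(2,\ldots,2)$-stress-free} when $\Phi_K$ is surjective; the conjecture then reads: every $d$-dimensional balanced complex embedded in $\S^{2d}$ has this property.

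Next I would establish higher-dimensional analogs of the Cone, Deletion, Contraction, and Gluing Lemmas for this notion. The cone lemma is immediate from the commutativity of coning with balanced shifting. The deletion and contraction lemmas should follow from the same rank arguments as in the proofs of Lemmas \ref{lem:bipDeletion} and \ref{lem:bipVertexSpliting}, once the matrix of $\Phi_K$ is written with rows indexed by colorful $d$-faces of $K$ and columns blocked by (vertex, transverse-color-slot) pairs; identifying two same-colored vertices then merges the corresponding column blocks and duplicates the rows indexed by their shared links. The gluing lemma requires a higher-dimensional version of the ``intersection of row spans equals span of intersection'' statement behind Lemma \ref{intersection}, which should admit a similar explicit complementary-basis proof.

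With these tools in hand, the induction proceeds as follows. The base case $d=1$ is Theorem \ref{theo:BipartiteGluck}. For the inductive step, monotonicity of balanced shifting under subcomplexes (as used at the start of the proof of Theorem \ref{theo:BipartiteGluck}) lets us assume $K$ is maximal among balanced $d$-dimensional complexes embedded in $\S^{2d}$. One then seeks a local combinatorial move on $K$ that is balanced- and embedding-preserving, strictly decreases the number of vertices, and falls under the scope of the Contraction or Deletion Lemma; natural candidates are balanced vertex identifications across a colorful $2d$-cell (generalizing the 4-gon contraction of Lemma \ref{lem:bipartiteContraction}), balanced bistellar flips on matching facets, or ridge-collapses in the link of a shared colorful face.

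The principal obstacle is this last step. In the plane, Lemma \ref{lem:bipartiteContraction} furnishes an explicit contraction because every maximal bipartite planar graph partitions $\S^2$ into quadrilaterals whose opposite vertex pairs have exactly two common neighbors; for $d\geq 2$ the combinatorial structure of maximal balanced $d$-dimensional subcomplexes of triangulations of $\S^{2d}$ is essentially unexplored, and it is not clear whether a balanced- and embedding-preserving elementary move with the correct numerology is even guaranteed to exist. This parallels the state of affairs for Conjecture \ref{conj:Kalai-Sarkaria} on the symmetric side. A potentially more robust alternative would bypass combinatorial induction entirely and attack $\ker\Phi_K$ directly via van Kampen-type obstructions or deleted-join constructions, exhibiting any putative stress as a cohomological class that must vanish for embeddable complexes; but we do not presently see how to push such an argument through.
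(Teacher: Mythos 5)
This statement is labeled a \emph{conjecture} in the paper, and the paper does not prove it: Section~\ref{sec:balancedKalai-Sarkaria} explicitly says it is known only for $d=0,1$, so no proof exists to compare your proposal against. What the paper does offer is a toolkit of partial results that you have, in effect, independently reconstructed. Your algebraic reformulation of the conclusion as surjectivity of $\Phi_K$ is exactly Lemma~\ref{does_not_contain} (via the matrix $M(K,l)$ with $l=2$), and the deletion lemma you propose is Lemma~\ref{lem:deletion--HighDim}; the paper also establishes a join lemma (Lemma~\ref{lem:join}) and two subdivision propositions (Propositions~\ref{prop:sdLink} and~\ref{prop:sdPseudomanifold}) as evidence. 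None of these, however, supplies the inductive engine your outline requires.

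The gap you flag in your final paragraph is precisely the open problem, and you have located it correctly: for $d\geq 2$ there is no known analog of Lemma~\ref{lem:bipartiteContraction} producing a balanced, embedding-preserving elementary move (a contraction with the right numerology, a bistellar flip, or a collapse) in a maximal balanced $d$-complex embedded in $\S^{2d}$. Without such a move the Contraction and Deletion Lemmas have nothing to bite on, and the induction from $d$ to $d-1$ (or on vertex count at fixed $d$) cannot start. Two additional cautions: first, ``maximal balanced complex embeddable in $\S^{2d}$'' is a far less rigid notion than ``maximal bipartite planar graph,'' since for $d\geq 3$ PL and topological embeddability are equivalent but for $d=2$ they may differ, and maximality does not force a cell structure the way Lemma~\ref{lem:MaxPlanarBipartite} does; second, a higher-dimensional gluing lemma in the style of Lemma~\ref{intersection} would need the row-span intersection to be controlled for facet-ridge matrices of joins of smaller-dimensional spheres, which is not a straightforward extension of the rank-$1$-block computation used there. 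Your assessment that one might instead attack $\ker\Phi_K$ via van Kampen-type obstructions aligns with the paper's Conjecture~\ref{conj:vanKampenBalancedKalaiSarkaria}, which is offered as a sufficient (for $d\geq 3$) but equally open reformulation.

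In short: your proposal is not a proof and does not claim to be one; it is a faithful reconstruction of the paper's own program, and it correctly identifies the missing combinatorial lemma as the obstruction.
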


As with the Kalai--Sarkaria conjecture,
Conjecture \ref{conj:balancedKalaiSarkaria} is known
so far only for $d=0,1$: the case $d=0$ is obvious
and the case $d=1$ is Theorem \ref{theo:BipartiteGluck}.
We observe that Conjecture \ref{conj:balancedKalaiSarkaria}
implies a weaker version of Conjecture \ref{conj:Kalai-Sarkaria}
concerning Euler-type upper bound inequalities
(see \cite{Grunbaum-70}) for {\em all}
simplicial complexes (cf.~ Conjecture \ref{c:ksw}):

\begin{prop}\label{prop:Heawood-ineq}
If Conjecture \ref{conj:balancedKalaiSarkaria} is true,
then for every nonnegative integer $d$ the following holds:
\begin{enumerate}
\item[(i)] If $\Gamma$ is a
$d$-dimensional balanced complex that embeds in $\S^{2d}$,
then \[f_d(\Gamma)\leq 2 f_{d-1}(\Gamma).\]

\item[(ii)] There exists a constant $c(d)$ such that for
an arbitrary $d$-dimensional simplicial complex $K$ that embeds in
$\S^{2d}$, $f_d(K)\leq c(d) f_{d-1}(K)$.
\end{enumerate}
\end{prop}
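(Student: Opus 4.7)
The plan has two parts: a direct counting argument for (i) using Conjecture \ref{conj:balancedKalaiSarkaria}, and a reduction of (ii) to (i) via the barycentric subdivision.

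For (i), I would first unpack what the conjecture gives for $\Gamma$. Fix any $(2,\ldots,2)$-admissible order $<$ on the vertex set of $\Gamma$; the conjecture says $\Gamma^{b,<}$ does not contain $[3]^{*(d+1)}$. Since $\Gamma^{b,<}$ is balanced-shifted (Section 7), this is equivalent to the single assertion that the $d$-face $\{3_1, 3_2, \ldots, 3_{d+1}\}$ (the third vertex from each color class) is not in $\Gamma^{b,<}$, and, again by balanced-shiftedness, this is equivalent to the statement that every $d$-face $\sigma \in \Gamma^{b,<}$, encoded by its tuple $(p_1, \ldots, p_{d+1})$ of positions within color classes, has $p_i \le 2$ for some $i$. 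I would then set up an injection: for each $d$-face $\sigma$, let $i(\sigma)$ be the smallest color with $p_i \le 2$, and send $\sigma$ to the pair $(v, \sigma \setminus \{v\})$, where $v$ is the vertex of $\sigma$ in color $i(\sigma)$. The map is injective, since $\sigma$ is recovered by adjoining $v$, and the codomain---pairs consisting of a vertex of position $1$ or $2$ and a $(d-1)$-face omitting its color class---has size at most $2 f_{d-1}(\Gamma^{b,<})$. Since balanced shifting preserves $f$-vectors, this yields $f_d(\Gamma) \le 2 f_{d-1}(\Gamma)$.

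For (ii), given a $d$-complex $K$ embeddable in $\S^{2d}$, I would reduce to the balanced case by passing to the barycentric subdivision $\mathrm{sd}(K)$, which is a balanced $d$-complex (with color classes indexed by the dimensions of the faces of $K$) that is homeomorphic to $K$ and hence embeds in $\S^{2d}$. Applying (i) gives $f_d(\mathrm{sd}(K)) \le 2 f_{d-1}(\mathrm{sd}(K))$. Combining this with the chain-counting identities $f_d(\mathrm{sd}(K)) = (d+1)!\, f_d(K)$ and $f_{d-1}(\mathrm{sd}(K)) = d!\, f_{d-1}(K) + C_d f_d(K)$ (where $C_d$ counts length-$d$ chains in the face poset of a $d$-simplex ending at the top face), and rearranging, is intended to yield a linear bound of the form $f_d(K) \le c(d) f_{d-1}(K)$.

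The main obstacle I anticipate is the last step: the direct rearrangement gives $[(d+1)! - 2 C_d]\, f_d(K) \le 2 d!\, f_{d-1}(K)$, and a quick computation shows $2 C_d \ge (d+1)!$ already for small $d$, so the naive substitution is vacuous. The fix I would pursue is to apply Conjecture \ref{conj:balancedKalaiSarkaria} to a cleverer auxiliary balanced complex built from $K$---for instance, a balanced refinement that introduces fewer new $(d-1)$-faces per $d$-face than the full barycentric subdivision, or an iterated or product construction---so that the coefficient on $f_d(K)$ after substitution is strictly positive. Since only the existence, not the value, of $c(d)$ is required, there is considerable flexibility in choosing the auxiliary complex.
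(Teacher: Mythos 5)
Your part (i) is essentially the paper's argument: both of you use the balanced-shiftedness of $\Gamma^{b,<}$ together with $[3]^{*(d+1)}\nsubseteq\Gamma^{b,<}$ to conclude that every facet of $\Gamma^{b,<}$ contains one of the two smallest vertices of some color, and both of you derive the $2:1$ bound by deleting that distinguished vertex from each facet. The paper takes $F\mapsto F\setminus\{\min_<F\}$ (using $(2,\ldots,2)$-admissibility to see $\min_<F$ is among the first two of its color); you choose the smallest color with position $\le 2$. Either choice gives a map from $d$-faces of $\Gamma^{b,<}$ to $(d-1)$-faces whose fibers have size at most $2$, and then $f$-vector preservation finishes it. No issue there.

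Part (ii) is where your proof does not close, and you have correctly diagnosed the problem: barycentric subdivision creates roughly as many new $(d-1)$-faces as $d$-faces, so after substituting into the inequality from (i) the coefficient of $f_d(K)$ is nonpositive and the bound is vacuous. Your suggested repair---a more economical balanced refinement---is only a hope; you do not exhibit such a complex, and it is not clear one exists with the properties you would need. The paper avoids subdivision entirely and instead uses a random coloring. Color the vertices of $K$ independently and uniformly with $d+1$ colors. A fixed $d$-face is colorful (all $d+1$ vertices receive distinct colors) with probability $(d+1)!/(d+1)^{d+1}$, so by linearity of expectation some coloring makes at least $\frac{(d+1)!}{(d+1)^{d+1}}f_d(K)$ of the $d$-faces colorful. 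Let $L$ be the subcomplex of $K$ spanned by these colorful facets. Then $L$ is a balanced $d$-dimensional complex, $L\subseteq K$ so $L$ still embeds in $\S^{2d}$ and $f_{d-1}(L)\le f_{d-1}(K)$, and $f_d(L)\ge\frac{(d+1)!}{(d+1)^{d+1}}f_d(K)$. Applying (i) to $L$ and chaining these inequalities yields $f_d(K)\le\frac{2(d+1)^{d+1}}{(d+1)!}f_{d-1}(K)$, so one may take $c(d)=2(d+1)^{d+1}/(d+1)!$. The key idea you were missing is to pass to a balanced \emph{subcomplex} capturing a constant fraction of the top faces, rather than to a balanced \emph{refinement}, which inevitably inflates the ridge count.
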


\begin{proof}
(i) It follows from Conjecture \ref{conj:balancedKalaiSarkaria} that
for any facet $F$ in $\Gamma^b$ there must be a colorset $V_i$
such that $F$ contains one of the two \emph{minimal} elements of $V_i$.
Since the total order $<$ on $V$ is $(2,2,\ldots,2)$-admissible,
we conclude that
the map $F\mapsto F\setminus\{\min_<(F)\}$ from the set of facets
of $\Gamma^b$ to the set of $(d-1)$-faces of $\Gamma^b$, is at most
$2 : 1$. The fact that balanced shifting preserves $f$-vectors
then yields $f_d(\Gamma)\leq 2 f_{d-1}(\Gamma)$.

(ii)
In a random coloring of the vertices of $K$ by $d+1$ colors,
the probability that a given facet is colorful (i.e., contains a vertex of
each color) is $\frac{(d+1)!}{(d+1)^{d+1}}$.
Thus, there is a coloring with at least $\frac{(d+1)!}{(d+1)^{d+1}}f_d(K)$
colorful facets; denote by $L$
the balanced subcomplex of $K$ spanned by these facets.
Then by part (i),
$$f_d(K)\leq \frac{(d+1)^{d+1}}{(d+1)!}f_d(L)\leq
\frac{(d+1)^{d+1}}{(d+1)!} 2 f_{d-1}(L)
\leq \frac{2(d+1)^{d+1}}{(d+1)!}f_{d-1}(K).$$
Hence, taking $c(d)=2(d+1)^{d+1}/(d+1)!$ completes the proof.
\end{proof}

\noindent We remark that Conjecture~\ref{conj:Kalai-Sarkaria}, if true,
would imply that $c(d)=d+2$, while from Conjecture~\ref{conj:balancedKalaiSarkaria}
we only derived the weaker estimate of $c(d)=2(d+1)^{d+1}/(d+1)!$.

We next show that the above Euler-type inequality implies a weaker version of
Conjecture \ref{conj:balancedKalaiSarkaria}, so ``up to constants''
they are equivalent; more precisely:

\begin{prop}\label{prop:Heawood-->balancedKalaiSarkaria}
Assume there is a constant $c(d)$ such that for every $d$-dimensional
balanced complex $K$ embeddable in $\S^{2d}$,
$f_d(K)\leq c(d) f_{d-1}(K)$. Let
$C(d)=(d+1)c(d)$. Then for every $d$-dimensional balanced
complex $K$ embeddable in $\S^{2d}$ and a
$(C(d),\ldots,C(d))$-admissible order $<$,
$K^{b,<}$ does not contain $[C(d)+1]^{*(d+1)}$.
\end{prop}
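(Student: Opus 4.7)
The plan is a proof by contradiction that extracts from $K$ a small balanced subcomplex violating the assumed Euler-type bound. Set $N=C(d)+1=(d+1)c(d)+1$ and suppose, toward contradiction, that $K^{b,<}$ contains a copy of $[N]^{*(d+1)}$.

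First I would move this copy to the color-initial vertex set $W=\bigsqcup_{c=1}^{d+1} W_c$, where $W_c$ consists of the least $N$ vertices of color $c$. Given a copy on some $S=\bigsqcup_c S_c$ with $|S_c|=N$, the bijection that sends the $i$-th smallest element of $W_c$ to the $i$-th smallest element of $S_c$ is coordinate-wise non-increasing; hence each facet of the join on $W$ is obtained from a facet on $S$ by successively replacing each coordinate with a smaller vertex of the same color, which keeps it in $K^{b,<}$ by the balanced-shifting property. Thus $K^{b,<}|_W\supseteq [N]^{*(d+1)}$.

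Next I would apply the Euler-type hypothesis to the balanced subcomplex $M:=K|_W$. Being a subcomplex of $K$, the complex $M$ also embeds in $\mathbb{S}^{2d}$, so the hypothesis gives $f_d(M)\le c(d)\,f_{d-1}(M)$. Since $M$ has at most $N$ vertices in each of $d+1$ colors, the trivial bound $f_{d-1}(M)\le (d+1)N^d$ (a $(d-1)$-face chooses one vertex from each of $d$ colors) yields
\[
f_d(M)\le c(d)(d+1)N^d=C(d)\,N^d<N^{d+1}.
\]
The contradiction will be achieved once one establishes the matching lower bound $f_d(M)\ge N^{d+1}$, since this forces $N\le C(d)$, contradicting $N=C(d)+1$.

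To obtain this lower bound, I would invoke the commutation of balanced shifting with color-initial vertex restriction, namely $(K|_W)^b=K^{b,<}|_W$. Because balanced shifting preserves $f$-vectors, this gives
\[
f_d(M)=f_d\bigl((K|_W)^b\bigr)=f_d(K^{b,<}|_W)\ge N^{d+1}
\]
by the first step. Establishing this commutation is the step I expect to require the most care, and is the main technical obstacle. It is a balanced analog of the familiar commutation of symmetric algebraic shifting with restriction to an initial segment of vertices, and should follow from a multigraded argument in the Stanley--Reisner ring $R[K]=S/I_K$: the block-genericity of $\Theta$ together with the $(C(d),\ldots,C(d))$-admissibility of $<$ should ensure that, in each multidegree $e_T$, the greedy lex basis of $R[K]_{e_T}$ picked from $\theta$-monomials whose vertex supports lie in $W$ projects under $R[K]\twoheadrightarrow R[K|_W]$ onto the greedy lex basis of $R[K|_W]_{e_T}$ in the corresponding $\tilde\theta$-monomials. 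Careful multigraded bookkeeping of this greedy selection is where the technical work lies.
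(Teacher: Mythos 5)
Your approach diverges completely from the paper's, and unfortunately the step you flag as ``the main technical obstacle'' is not merely technical: the claimed commutation $(K|_W)^b = K^{b,<}|_W$ is false, and the whole argument hinges on it. Balanced shifting preserves $f$-vectors, so $(K|_W)^b$ has exactly as many $d$-faces as $K|_W$; but $K^{b,<}|_W$ is governed by the \emph{global} size of $K$, not by the local structure of $K$ near the initial vertices, and its $d$-face count can far exceed $f_d(K|_W)$. A small example makes this concrete: let $G$ be the $2n$-cycle viewed as a bipartite graph with parts of size $n$, and let $W$ consist of the first two vertices of each color. Then $G|_W$ is a path with $3$ edges, so $(G|_W)^b$ has $3$ edges; but $G^b$ is a balanced-shifted graph with $2n$ edges, which for $n\ge 2$ already contains $K_{2,2}$, so $G^b|_W$ has $4$ edges. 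In the intended application the discrepancy is exactly what you are trying to derive a contradiction from, so the argument is circular: you cannot use an equality whose failure is the very phenomenon under study. The reduction to $W$ via balanced-shiftedness and the Euler-type count $f_d(K|_W) < N^{d+1}$ are both fine, but without the commutation they do not speak to $K^{b,<}$ at all.

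The paper's proof is considerably shorter and avoids any restriction-commutation claim. From $f_d(K)\le c(d)f_{d-1}(K)$ and the identity $\sum_G \#\{\text{facets}\supseteq G\}=(d+1)f_d(K)$ (sum over ridges $G$), averaging yields a ridge of $K$ contained in at most $(d+1)c(d)=C(d)$ facets. One then applies the high-dimensional Deletion Lemma (Lemma~\ref{lem:deletion--HighDim}): if $[C(d)+1]^{*(d+1)}\nsubseteq (\antist_K(G))^{b,<}$, then $[C(d)+1]^{*(d+1)}\nsubseteq K^{b,<}$. Since $\antist_K(G)$ is a balanced subcomplex of $K$ (hence still embeddable in $\S^{2d}$) with fewer facets, induction finishes the proof. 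If you want to salvage a restriction-based argument you would need a genuine inclusion or rank inequality relating $M(K|_W, C(d))$ to $M(K, C(d))$, not a shifting identity; as written the key step fails.
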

\begin{proof}
Our assumption that $f_d(K)\leq c(d) f_{d-1}(K)$ implies that there is
a ridge of $K$ that is contained in at
most $(d+1)c(d)$ facets of $K$. Now use the high-dimensional
Deletion Lemma (see Lemma \ref{lem:deletion--HighDim} below) and induction.
\end{proof}

We now turn to rephrasing Conjecture \ref{conj:balancedKalaiSarkaria}
in terms of embeddability of $K^b$, 
a formulation that is \emph{not} available for Conjecture \ref{conj:Kalai-Sarkaria}:
indeed, while shifted graphs not containing $K_5$ may be nonplanar
(for instance, $G^s$ where $G$ is the graph of the octahedron),
\emph{balanced-shifted} bipartite graphs not containing $K_{3,3}$
are necessarily planar. This statement extends to higher dimensions,
as the following proposition shows.

\begin{prop}\label{prop:EmbedNo[3]*d}
Let $K$ be a $d$-dimensional balanced-shifted simplicial complex
not containing $[3]^{*(d+1)}$ as a subcomplex. Then $K$ is PL embeddable
in $\S^{2d}$.
\end{prop}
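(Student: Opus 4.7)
The plan is to induct on the number of \emph{large} vertices of $K$, where a vertex in a colour class $V_i$ is called large if it is not one of the two smallest vertices of $V_i$ under the shifting order. If $K$ has no large vertices, then $K \subseteq [2]^{*(d+1)}$, which is the boundary complex of the $(d+1)$-dimensional cross-polytope, hence a PL $d$-sphere; this embeds in $\S^{2d}$ as an equatorial subsphere, giving the base case.

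For the inductive step, let $v$ be the largest vertex of $K$ under $<$, let $c$ be its colour, and set $K' := K - v$. Then $K'$ is balanced-shifted, still avoids $[3]^{*(d+1)}$, and has one fewer large vertex, so by induction there is a PL embedding $f : |K'| \hookrightarrow \S^{2d}$. The key structural fact I would use is that, by balanced-shiftedness, for every $w \in V_c$ with $w < v$ (in particular for $w = 1_c$) one has $\lk_K(v) \subseteq \lk_K(w)$, since any face $F \ni v$ of $K$ shifts to $(F \setminus \{v\}) \cup \{w\} \in K$. Consequently $\{1_c\} * \lk_K(v)$ is already a subcomplex of $K'$; its image under $f$ is a PL $d$-dimensional subcomplex $D_1$ of $f(|K'|)$ whose frontier is $f(|\lk_K(v)|)$, and the star of $v$ in $K$ is combinatorially a duplicate of this subcone of the star of $1_c$ in $K'$.

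To extend $f$ to $v$, I would then choose a small PL regular neighbourhood $N$ of $f(1_c)$ in $\S^{2d}$, arranged so that $N$ is a standard PL $2d$-ball and $N \cap f(|K'|)$ is contained in the star of $f(1_c)$ inside $f(|K'|)$. Inside $N$ the sub-cone $D_1 \cap N$ has apex $f(1_c)$. Next I would pick $f(v) \in N$ at a point slightly off $f(1_c)$ in a generic transverse direction, and extend $f$ by coning $f(|\lk_K(v)|)$ from $f(v)$; this new cone is supported in $N$, shares its boundary with $D_1 \cap N$, and---if the offset direction is chosen correctly---has interior disjoint from $f(|K'|)$.

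The main obstacle I foresee is precisely this last step: realising the new cone so that its interior avoids $f(|K'|)$. The codimension of the cone inside $N$ is $d$, so for $d \geq 3$ standard PL general-position arguments should separate two $d$-dimensional cones sharing an apex region and a common boundary. For small $d$ one must argue directly, much as in the proof of Theorem~\ref{theo:BipartiteGluck} (which is the case $d=1$). The combinatorial hypothesis enters exactly here: because $\lk_K(v)$ is balanced-shifted and sits inside $\lk_K(1_c)$, the subcone we need to split off is already present in $K'$, and this is what allows the local PL isotopy producing the new cone to exist. Verifying that this split can be performed uniformly as the induction proceeds---so that successive insertions of large vertices do not interfere with one another near the shared vertex $f(1_c)$---is the delicate technical point I would have to work out carefully.
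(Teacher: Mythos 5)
Your plan takes a genuinely different route from the paper: you induct on the number of ``large'' vertices and try to extend an embedding one vertex at a time, whereas the paper inducts on the dimension $d$, constructing an explicit coordinatewise embedding of the maximal balanced-shifted complex $\Gamma(d)$ avoiding $[3]^{*(d+1)}$ (embed $\Gamma(d-1)$ in $\R^{2d-2}$, extend to a controlled PL map of $\Gamma(d)_{[d]}$ and resolve intersections into a new coordinate, then take a suspension using the two smallest vertices of the new color and place the remaining vertices on an arc).

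However, the extension step in your argument has a genuine gap, and you can see it is a gap---not merely a technicality to be tidied---by testing it against $K=[3]^{*(d+1)}$ itself. If $v=3_{d+1}$ is its largest vertex, then $K'=K-v=[3]^{*d}*[2]$ \emph{does} avoid $[3]^{*(d+1)}$ and \emph{does} PL embed in $\S^{2d}$ (it is the suspension of a $(d-1)$-complex, which always embeds in $\S^{2d}$). Your extension step uses only balanced-shiftedness (giving $\lk_K(v)\subseteq\lk_K(1_c)$) and a general-position/push-off argument; nowhere does it invoke the hypothesis that $K$ avoids $[3]^{*(d+1)}$. So, if it worked, it would also embed $[3]^{*(d+1)}$ in $\S^{2d}$, contradicting van Kampen--Flores. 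Concretely, the general-position claim is the culprit: two $d$-dimensional polyhedra in a $2d$-manifold in general position intersect in a set of dimension $\leq d+d-2d=0$, i.e.\ generically in isolated \emph{points}, not in the empty set; removing those points is obstructed exactly by the van Kampen obstruction. The hypothesis you never use is what would have to save you: since $v$ is a large vertex of a complex avoiding $[3]^{*(d+1)}$, the link $\lk_K(v)$ avoids $[3]^{*d}$ and therefore embeds in $\S^{2d-2}$, giving the extra codimension that would be needed to realize the new cone disjointly. Without feeding this into the extension step, the induction cannot close, and your own test case $[3]^{*(d+1)}$ shows it.
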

\begin{proof}
Among all  $d$-dimensional balanced-shifted
simplicial complexes on the same vertex set as $K$, let
$\Gamma(d)$ be the maximal complex not containing $[3]^{*(d+1)}$.
In other words, the facets of
$\Gamma(d)$ are all the colorful $(d+1)$-subsets of $V$
that contain one of the least two vertices of some color.
We need to show that $\Gamma(d)$  is PL embeddable
in $\S^{2d}$.

For $d=0$ this is clear, as $\Gamma(d)$ consists of two points.
For $d=1$, this is also easy: in the plane, draw a square with vertices
$1,1',2,2'$; embed the vertices $3,4,\ldots,n$ in the open segment
connecting $1$ and $2$, and  the vertices $3',4',\ldots,m'$ in the parts
of the straight line through $1'$ and $2'$ that lie outside of the square;
now draw as straight segments the edges $ij'$ where at least one of $i,j\leq 2$.

We show by induction on $d$ how to PL embed $\Gamma(d)$ in $\mathbb{R}^{2d}$ for $d>1$.
Consider the first $d$ (out of $d+1$) colorsets of $\Gamma(d)$,
and two subcomplexes of $\Gamma(d)$ on these colors: $\Gamma(d-1)$ and $\Gamma(d)_{[d]}$.
(Note that $\Gamma(d-1)\subseteq \Gamma(d)_{[d]}$.)
Assume that $\Gamma(d-1)$ is PL embedded in $\mathbb{R}^{2d-2}\times\{0\}\times\{0\}$.
As $\dim \Gamma(d-1)=\dim \Gamma(d)_{[d]}=d-1$, we can extend this embedding to a PL map
from $\Gamma(d)_{[d]}$  into $\mathbb{R}^{2d-2}\times\{0\}\times\{0\}$ in such a way that
(i) the only intersections occur between pairs of facets that involve at least one of the
``added'' faces (i.e., faces of $\Gamma(d)_{[d]}$ that do not belong to $\Gamma(d-1)$),
(ii) they occur at interior points, and (iii) there are finitely many such points.
Now resolve these intersections by pulling the added
$(d-1)$-faces, one by one, into the \emph{negative} side of the last coordinate
(keeping the coordinate before last equal to zero).
Figure 1 illustrates the case of $d=2$, $n=4$, $m'=3'$.

\begin{figure}[ht]\label{pic}
\begin{center}
\includegraphics[height=7cm]{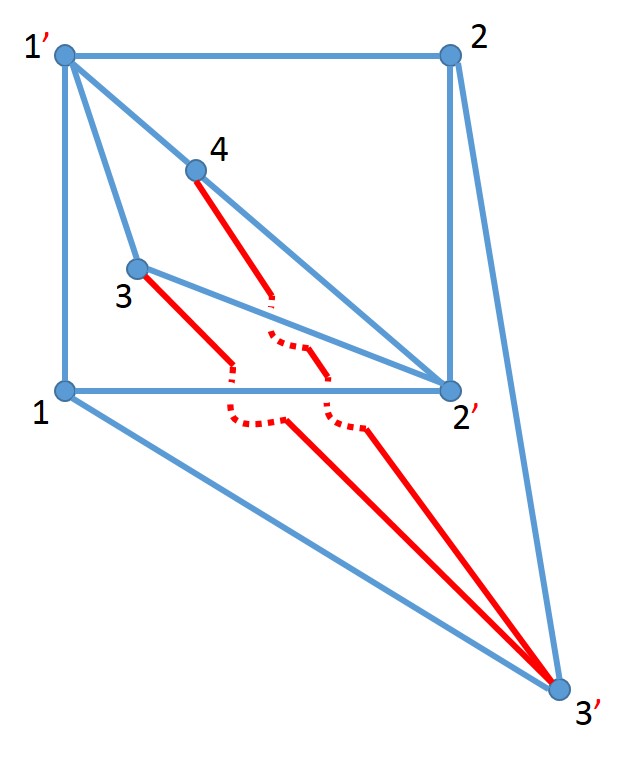}
\end{center}
\caption{The first step of the case $d=2$ (with $n=4, m=3$).
 The solid lines are the edges of $\Gamma(1)$;
 the partially dashed lines are the edges of $\Gamma(2)_{[2]}-\Gamma(1)$.}
\end{figure}

Next, place the first and second vertices of color $d+1$ at $\pm v$,
where $v$ is the unit vector with the coordinate before last equal to $1$,
and consider two geometric cones over the above embedding of $\Gamma(d)_{[d]}$:
one with apex $v$ and another one with apex $-v$.
The union of these cones provides an embedding of the suspension of $\Gamma(d)_{[d]}$,
$\Sigma(\Gamma(d)_{[d]})$,  and this embedding is such that
the last coordinate is always nonpositive.

Finally, place the remaining vertices (i.e., vertices number $3,4,\ldots$)
of color $d+1$ at distinct points on the open arc
$\{(0,\cdots,0, t,s):\ t^2+s^2=1,\ s,t>0\} \subset \R^{2d-2}\times \R^2$,
and for each of those points, construct a geometric cone over  $\Gamma(d-1)$ with that
point as the apex. These cones lie in distinct half hyperplanes with a common boundary,
namely $\mathbb{R}^{2d-2}\times\{0\}\times\{0\}$, and hence this union of cones is embedded.
All the new points added in this step have a \emph{positive} last coordinate,
and thus are disjoint from the embedding of $\Sigma(\Gamma(d)_{[d]})$.
Together with that embedding of $\Sigma(\Gamma(d)_{[d]})$,
they form an embedding of $\Gamma(d)$ into $\mathbb{R}^{2d}$.
\end{proof}

Combining Proposition \ref{prop:EmbedNo[3]*d} with the well-known fact
that the complex $[3]^{*(d+1)}$ is not PL embeddable in $\S^{2d}$ \cite{VanKampen,Flores},
we obtain that Conjecture \ref{conj:balancedKalaiSarkaria} is equivalent to the following:

\begin{conj}\label{conj:eqBalancedKalaiSarkaria}
If $K$ is a $d$-dimensional balanced complex that is topologically embeddable
in $\S^{2d}$, and $<$ is a $(2,\ldots,2)$-admissible order,
then $K^{b,<}$ is PL embeddable in $\S^{2d}$.
\end{conj}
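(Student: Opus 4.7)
The plan is to reduce the stated conjecture to the shifted-complex form (Conjecture \ref{conj:balancedKalaiSarkaria}) and then mount an induction on $d$ that mimics the proof of Theorem \ref{theo:BipartiteGluck}. By Proposition \ref{prop:EmbedNo[3]*d}, PL embeddability of $K^{b,<}$ in $\S^{2d}$ is equivalent to $K^{b,<}$ not containing $[3]^{*(d+1)}$ as a subcomplex, and since $K^{b,<}$ is balanced-shifted this is in turn equivalent to the single ``facet'' $\{3_1,3_2,\dots,3_{d+1}\}$ (the third vertex in each colorset) not lying in $K^{b,<}$. Thus the real content to establish is: if $K$ is a $d$-dimensional balanced complex topologically embedded in $\S^{2d}$ and $<$ is $(2,\dots,2)$-admissible, then this distinguished $d$-face is killed by balanced shifting.

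First I would exploit the monotonicity $H\subseteq K\Rightarrow H^{b,<}\subseteq K^{b,<}$ (from the inclusion of Stanley-Reisner ideals) to reduce to \emph{maximal} balanced $d$-complexes embeddable in $\S^{2d}$, which should be balanced combinatorial triangulations of $\S^{2d}$ whose face structure generalizes Lemma \ref{lem:MaxPlanarBipartite} (for $d=1$, maximal planar bipartite graphs quadrangulating $\S^2$). Next I would develop higher-dimensional balanced analogs of the Deletion, Cone, Contraction, and Gluing Lemmas of Section 3, working in the multigraded piece $\R[K]_{(1,1,\dots,1)}$ and using the map $\Phi_K^{(2,\dots,2)}$ in direct analogy with Lemma \ref{mapPhi}. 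The paper already hints that such lemmas exist (Lemma \ref{lem:deletion--HighDim}), and the linear-algebraic arguments of Section 3 should carry over essentially verbatim once the right multilinear rigidity matrix is set up.

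With these tools in place, the induction would proceed as follows. The base case $d=1$ is Theorem \ref{theo:BipartiteGluck}. For the inductive step, given a maximal balanced triangulation $K$ of $\S^{2d}$, I would search for a ``contractible pair'': two vertices $v,w$ of the same color such that $\lk_K(v)\cap \lk_K(w)$ is exactly the suspension-type boundary structure needed to trigger the balanced Contraction Lemma, producing a maximal balanced triangulation of $\S^{2d}$ on one fewer vertex, whose balanced shift is known by induction to avoid the forbidden facet. An argument along these lines worked for $d=1$ via Lemma \ref{lem:bipartiteContraction} (any $2$-cell of a maximal planar bipartite graph supplies an opposite pair with exactly two common neighbors).

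The main obstacle, and by far the hardest step, is finding such a contractible pair in dimension $d\geq 2$. The $d=1$ argument rests on two things with no clean high-dimensional substitute: Euler's formula, which forced quadrangular $2$-cells; and planarity of a drawing, which localized the analysis on the sphere. For $d\geq 2$ we lack both a sharp face-count (indeed, the Euler-type inequality $f_d\leq 2f_{d-1}$ is itself a consequence of the conjecture, not an input, by Proposition \ref{prop:Heawood-ineq}) and a tangible ``cell decomposition'' of $\S^{2d}$ by the image of $K$ that constrains the link of a ridge. A possible workaround would be to combine the balanced Gluing Lemma with a careful decomposition of $K$ along separating balanced spheres of smaller dimension, together with a relative Contraction Lemma, in order to avoid needing a global contractible pair; but ensuring that the resulting pieces remain embeddable and that the gluing does not introduce the forbidden face $\{3_1,\dots,3_{d+1}\}$ is precisely where I expect the real difficulty, and why the conjecture remains open beyond $d=1$.
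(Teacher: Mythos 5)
The statement you are asked to prove is a \emph{conjecture}: the paper does not prove it, and explicitly notes that it (equivalently, Conjecture \ref{conj:balancedKalaiSarkaria}) is known only for $d=0,1$. Your opening reduction is correct and matches the paper's own reasoning: by Proposition \ref{prop:EmbedNo[3]*d} together with the van Kampen--Flores theorem that $[3]^{*(d+1)}$ does not PL embed in $\S^{2d}$, the conjecture is equivalent to showing that $K^{b,<}$ avoids $[3]^{*(d+1)}$, i.e.\ that the face $\{3_1,\dots,3_{d+1}\}$ is not in $K^{b,<}$. But everything after that is a research program, not a proof, and you say so yourself: the existence of a ``contractible pair'' in dimension $d\ge 2$ is exactly the missing content, and no argument is supplied for it. So the proposal has a genuine (and acknowledged) gap coinciding with the open part of the conjecture.

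Beyond the admitted gap, one step of your reduction is already problematic. You propose to pass to ``maximal balanced $d$-complexes embeddable in $\S^{2d}$, which should be balanced combinatorial triangulations of $\S^{2d}$.'' This generalization of Lemma \ref{lem:MaxPlanarBipartite} fails for $d\ge 2$: a $d$-complex in $\S^{2d}$ has codimension $d$, so its complement is not a union of cells bounded by facets, and there is no reason a maximal embeddable complex should triangulate the sphere (indeed a $d$-complex with $f_d$ growing linearly in $f_{d-1}$ cannot be all of a $2d$-sphere triangulation). This is precisely why the paper's Euler-formula-based combinatorics of Section 4 has no high-dimensional substitute, and why Proposition \ref{prop:Heawood-ineq} derives the face-count inequality \emph{from} the conjecture rather than using it as input. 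The paper's actual contributions toward the conjecture are the equivalence just described, the high-dimensional Deletion Lemma (Lemma \ref{lem:deletion--HighDim}), and the stability results under joins and subdivisions (Lemma \ref{lem:join}, Propositions \ref{prop:sdLink} and \ref{prop:sdPseudomanifold}); none of these yields the inductive step you outline.
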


Let $\mathfrak{o}(K)$ denote the van Kampen obstruction to PL
embeddability of a $d$-dimensional complex $K$ in $\S^{2d}$,
computed with coefficients in $\mathbb{Z}$. (One may also
use other coefficients, e.g., $\Z/2\Z$). Recall that if $K$ is
PL embeddable in $\S^{2d}$, then $\mathfrak{o}(K)=0$
(and the converse also holds provided $d\neq 2$),
see \cite{Shapiro,Wu,Freedman-Krushal-Teichner}. As
$\mathfrak{o}([3]^{*(d+1)})\neq 0$ (even with $\mathbb{Z}_2$
coefficients) and as, according to \cite{Bry72}, for $d\geq 3$
the topological embeddability of a $d$-dimensional complex $K$ in
$\S^{2d}$ is equivalent to the PL embeddability, we
obtain that for $d\geq 3$, the following conjecture implies
Conjecture \ref{conj:eqBalancedKalaiSarkaria},
even when considered with $\mathbb{Z}_2$ coefficients.

\begin{conj}\label{conj:vanKampenBalancedKalaiSarkaria}
Let $K$ be a $d$-dimensional balanced complex. If $\mathfrak{o}(K)=0$,
then $\mathfrak{o}(K^b)=0$.
\end{conj}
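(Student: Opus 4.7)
The plan is to attack the conjecture in two stages: first, reduce it to a statement about preservation of embeddability; second, attempt to mimic the inductive strategy of Theorem \ref{theo:BipartiteGluck} in higher dimensions.

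For the reduction step, I would invoke the Haefliger--Weber--Bryant theorem: for $d \geq 3$, the vanishing of $\mathfrak{o}(K)$ is equivalent to PL-embeddability of $K$ in $\S^{2d}$. Thus for $d\geq 3$ the conjecture becomes: \emph{balanced shifting preserves PL-embeddability of balanced $d$-complexes}. The case $d = 0$ is trivial, $d = 1$ is Theorem \ref{theo:BipartiteGluck}, so one focuses on $d\geq 3$; the codimension-2 case $d=2$ must be handled separately since there $\mathfrak{o}=0$ is strictly weaker than embeddability, and the conjecture must be proved at the level of the obstruction class itself.

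For the main inductive step (assuming $d\geq 3$), I would follow the blueprint of Theorem \ref{theo:BipartiteGluck}. Specifically, I would proceed by induction on $f_d(K)$: using the fact that balanced shifting is monotone under inclusion of Stanley--Reisner ideals, it suffices to treat complexes that are maximal among balanced $d$-complexes PL-embedded in $\S^{2d}$ with the given color classes. One would then seek a higher-dimensional analog of Lemma \ref{lem:bipartiteContraction}: in any maximal balanced embedded $d$-complex, locate two vertices $v,w$ of the same color whose link intersection $\lk_K(v)\cap\lk_K(w)$ is ``small enough'' that contracting them preserves both embeddability and the combinatorial structure needed to invoke a balanced generalization of the Contraction Lemma (Lemma \ref{lem:bipVertexSpliting}). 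By induction on the resulting smaller complex $K/(v\sim w)$, one would then deduce $\mathfrak{o}((K/(v\sim w))^b) = 0$, and transfer this to $\mathfrak{o}(K^b) = 0$ via the contraction argument.

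The hard part, and the main obstacle, lies in the second stage. The proof of Theorem \ref{theo:BipartiteGluck} exploits the very rigid combinatorial structure of maximal planar bipartite graphs --- every face is a $4$-cycle and one can always find opposite vertices with exactly two common neighbors. No analogous structure theorem is known for maximal balanced $d$-complexes embedded in $\S^{2d}$; in particular, it is not even clear that a suitable ``contractible pair'' always exists. A second obstacle is that one needs a balanced, high-dimensional generalization of the Contraction Lemma, paralleling the one we proved in Section 3 but phrased in terms of van Kampen obstructions rather than rigidity ranks. A complementary (and perhaps cleaner) route would be to attempt an algebraic/cohomological proof: express $\mathfrak{o}(K)$ in terms of the equivariant cohomology of the deleted join $K *_\Delta K$, and construct, out of the generic change of coordinates $\Theta$ defining $K^b$, a $\Z/2$-equivariant chain map $K^b *_\Delta K^b \to K *_\Delta K$ of top-dimensional degree one. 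Such a map would let one pull back a cochain witnessing $\mathfrak{o}(K)=0$ to one witnessing $\mathfrak{o}(K^b)=0$. Constructing this map via the Gröbner-theoretic passage to initial monomials --- a semicontinuity-type argument --- seems to me the most promising concrete avenue, and the case $d=2$ can only be handled along such algebraic lines.
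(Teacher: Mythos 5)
This statement is one of the paper's \emph{conjectures}; the paper offers no proof of it, so there is nothing to compare your proposal against. Your submission is also not a proof but a research sketch, and to your credit you say so explicitly: both of the main steps you outline hinge on lemmas you do not have.

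A few remarks on the content of the sketch. Your reduction to preservation of PL-embeddability for $d\ge 3$ agrees with the paper's own discussion surrounding Proposition~\ref{prop:EmbedNo[3]*d}: using the completeness of the van Kampen obstruction for $d\neq 2$ (Shapiro, Wu, Freedman--Krushkal--Teichner) together with Bryant's result that topological embeddability of a $d$-complex in $\S^{2d}$ equals PL-embeddability for $d\geq 3$, one does see that in this range the conjecture says ``balanced shifting preserves embeddability.'' And the $d=1$ case really is Theorem~\ref{theo:BipartiteGluck} combined with the observation that a balanced-shifted bipartite graph without $K_{3,3}$ is planar. But beyond that the sketch does not close. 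There is no analog of Lemma~\ref{lem:MaxPlanarBipartite}--\ref{lem:bipartiteContraction} for maximal balanced $d$-complexes in $\S^{2d}$; it is not even known that a ``contractible pair'' exists, and the Contraction Lemma (Lemma~\ref{lem:bipVertexSpliting}) is a statement about ranks of rigidity matrices, not about van Kampen cohomology classes, so some further argument would be needed even if such a pair were found.

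Your alternative ``algebraic'' route also has a concrete gap you should be aware of. Balanced shifting is defined via a Gr\"obner-type passage to initial monomials inside $\R[K]$, and while this degeneration preserves Hilbert function data, it does not furnish a simplicial or chain map between $K$ and $K^b$: the face posets of $K$ and $K^b$ are typically unrelated (shifting is not a minor operation, does not preserve homology, etc.). Consequently there is no evident induced $\Z/2$-equivariant chain map $K^b *_\Delta K^b \to K *_\Delta K$, and ``semicontinuity'' of the initial ideal does not produce one. Without such a map the proposed pullback of a cochain witnessing $\mathfrak{o}(K)=0$ has no starting point. The $d=2$ case, which you correctly flag as needing a direct argument on the obstruction class, is therefore not merely the hardest case but currently inaccessible by the route you suggest.
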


We are now in a position to introduce a balanced rigidity matrix
corresponding to Conjecture~\ref{conj:balancedKalaiSarkaria}.
As with bipartite graphs, for a $d$-dimensional balanced complex $K$
and a fixed integer $l$, assign to each vertex $v\in K$ a generic
$l$-dimensional real vector $\theta_v$, and define the
following facet-ridge matrix $M(K,l)$: the rows
of $M(K,l)$ correspond to the facets $F$ of $K$, the columns of $M(K,l)$ come in
$l$-tuples with each $l$-tuple corresponding to a ridge $G$ of $K$;
the $1\times l$ block of $M(K)_{F,G}$ is $(0)$ if $G\nsubseteq F$ and
$\theta_{F-G}$ otherwise. (Thus, if $K$ is $1$-dimensional then
$M(K,l) = R^{(l,l)}(K)$.) Arguing as in
Lemma \ref{mapPhi} (and Proposition \ref{rigid/stressfree-criterion}),
we obtain:

\begin{lem}  \label{does_not_contain}
 For an $(l,\ldots, l)$-admissible order $<$, the
complex $K^{b,<}$ does not contain $[l+1]^{*(d+1)}$ as a subcomplex
if and only if the rows of the matrix $M(K,l)$ are linearly independent.
\end{lem}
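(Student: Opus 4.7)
The plan is to mimic the argument of Lemma \ref{mapPhi} and Proposition \ref{rigid/stressfree-criterion} in the higher-dimensional balanced setting. First, I would reduce the question to whether a single distinguished facet lies in $K^{b,<}$. Since $K^{b,<}$ is balanced-shifted and $(l,\ldots,l)$-admissibility places the first $l$ vertices of each color as an initial segment of $V$, the complex $K^{b,<}$ contains $[l+1]^{*(d+1)}$ if and only if it contains the single ``top'' facet $F_{0} := \{(l+1)_{1},\ldots,(l+1)_{d+1}\}$, where $(l+1)_{i}$ denotes the $(l+1)$-th vertex of color $i$. Next, unpacking the greedy-lex definition of $K^{b,<}$, the facet $F_{0}$ fails to lie in $K^{b,<}$ precisely when the monomial $\theta_{(l+1)_{1}}\cdots\theta_{(l+1)_{d+1}}$ lies in the span of monomials $\theta_{v_{1}}\cdots\theta_{v_{d+1}}$ indexed by colorful $(d+1)$-sets that are lex-smaller than $F_{0}$; by admissibility, these are exactly the colorful $(d+1)$-sets containing at least one ``low'' vertex (i.e., some $v_{i} \le l$ in color $i$).

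The heart of the argument is then to realize this span condition as the surjectivity of the multiplication map
\[
\Phi \ :\ \bigoplus_{i=1}^{d+1}\bigl(\R[K]_{e_{[d+1]\setminus\{i\}}}\bigr)^{l}\ \longrightarrow\ \R[K]_{e_{[d+1]}},
\]
whose $(i,j)$-summand acts by $f \mapsto \theta_{j_{i}}\cdot f$ for $j \in [l]$. The image of $\Phi$ is exactly the span of ``low-vertex'' top-degree monomials, so $K^{b,<}$ fails to contain $[l+1]^{*(d+1)}$ if and only if $\Phi$ is surjective, which (since $\dim \R[K]_{e_{[d+1]}}$ equals $f_{d}(K)$) is equivalent to $\rank(\Phi) = f_{d}(K)$.

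Finally, I would identify the matrix of $\Phi$ with $M(K,l)$ after choosing the natural monomial bases: $\{x_{F} : F \text{ facet}\}$ for the codomain, and for each $i$ the basis $\{x_{G} : G \text{ ridge of color type } [d+1]\setminus\{i\}\}$ for the corresponding summand of the domain. Since every ridge has a unique color type, the combined column-indexing collapses to indexing by pairs (ridge $G$, coordinate $j \in [l]$); the entry at row $F$ and column $(G,j)$ is the coefficient of $x_{F}$ in $\theta_{j_{i}}\,x_{G}$, which is the $j$-th coordinate of the generic $l$-vector attached to the single vertex $F\setminus G$ when $G \subset F$ and zero otherwise, exactly matching the definition of $M(K,l)$. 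Consequently $\rank(\Phi) = \rank M(K,l)$, and as $M(K,l)$ has $f_{d}(K)$ rows, linear independence of those rows is equivalent to $\rank M(K,l) = f_{d}(K)$, completing the equivalence. I expect the main delicacy to lie in the second step, namely using $(l,\ldots,l)$-admissibility to translate the lex-comparison of colorful facets into the purely combinatorial ``some coordinate is low'' condition; the remaining verifications are a direct but careful bookkeeping exercise matching the block structure of $\Phi$ with that of $M(K,l)$.
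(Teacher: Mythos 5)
Your proposal is correct and follows exactly the route the paper indicates, namely mimicking Lemma~\ref{mapPhi} and Proposition~\ref{rigid/stressfree-criterion}: the reduction to the top facet $F_0=\{(l+1)_1,\ldots,(l+1)_{d+1}\}$, the identification of the span of the ``low-vertex'' degree-$e_{[d+1]}$ monomials with the image of the multiplication map $\Phi$, and the matching of the matrix of $\Phi$ in the natural monomial bases with $M(K,l)$ are precisely the intended higher-dimensional analogues. The one step worth spelling out is the asserted equivalence between ``$F_0\notin K^{b,<}$'' and ``$\Phi$ is surjective'': the forward implication uses balanced-shiftedness a second time, beyond the reduction to $F_0$, since from $F_0\notin K^{b,<}$ one must first conclude (using that $K^{b,<}$ is balanced-shifted and that, by admissibility, the colorful $(d+1)$-sets that are coordinatewise $\geq F_0$ are exactly those lexicographically $\geq F_0$) that the entire greedy basis of $\R[K]_{e_{[d+1]}}$ consists of low-vertex monomials and hence lies in the image of $\Phi$ --- this is exactly the content of the argument in Part~2 of Lemma~\ref{mapPhi} that you say you are mimicking.
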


The following is a high-dimensional analog of the Deletion
Lemma (Lemma \ref{lem:bipDeletion}).

\begin{lem}\label{lem:deletion--HighDim}
Let $K$ be a $d$-dimensional balanced complex,
$<$ an $(l,\ldots,l)$-admissible order,
$G$ a ridge of $K$ contained in at most $l$ facets of $K$,
 and $L=\antist_K(G)$. If $[l+1]^{*(d+1)}\nsubseteq L^{b,<}$ then
$[l+1]^{*(d+1)}\nsubseteq K^{b,<}$.
\end{lem}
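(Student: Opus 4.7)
The plan is to translate the statement into linear algebra via Lemma \ref{does_not_contain} and then exploit the local structure of $M(K,l)$ around the column block indexed by $G$. By that lemma, it suffices to prove that if the rows of $M(L,l)$ are linearly independent then so are the rows of $M(K,l)$. The key structural observation is the following: the rows of $M(K,l)$ are labeled by the $d$-faces of $K$ and its column blocks by the $(d-1)$-faces of $K$; passing from $K$ to $L = \antist_K(G)$ removes exactly the (at most $l$) rows indexed by the facets $F_1,\dots,F_t$ containing $G$, together with the single column block indexed by $G$ itself. No other $(d-1)$-face of $K$ is lost, since one $(d-1)$-face cannot strictly contain another. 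Thus $M(L,l)$ is obtained from $M(K,l)$ by deleting $t\leq l$ rows and one block of $l$ columns.

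Assume for contradiction that a nontrivial dependence $\sum_F c_F R_F = 0$ holds among the rows of $M(K,l)$. I would analyze it by restricting to the $G$-column block. Here balancedness of $K$ enters: since $G$ is a ridge, each facet $F_i\supseteq G$ has the form $F_i = G\cup\{v_i\}$, and the $v_i$'s all lie in the unique color class not represented in $G$. By definition, the $G$-entry of $R_{F_i}$ is $\theta_{v_i}$, while every row indexed by a facet not containing $G$ has a zero block at $G$. The restricted relation therefore reads $\sum_{i=1}^t c_{F_i}\theta_{v_i}=0$ in $\R^l$. The vectors $\theta_{v_1},\dots,\theta_{v_t}$ are rows of a generic $t\times l$ submatrix of one of the color blocks of $\Theta$, and the hypothesis $t\leq l$ forces them to be linearly independent. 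Hence every $c_{F_i}=0$.

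The residual dependence involves only facets not containing $G$; on those rows the $G$-column block is already zero, so restricting to the remaining columns produces a genuine nontrivial dependence among the rows of $M(L,l)$, contradicting the hypothesis. The main (and essentially only nontrivial) point is the linear independence of the generic vectors $\theta_{v_1},\dots,\theta_{v_t}$, which rests on two facts working in tandem: balancedness forces the $v_i$ into a common color class, and the hypothesis $t\leq l$ keeps their number at most the block size. The argument is the natural higher-dimensional extension of the bipartite Deletion Lemma (Lemma \ref{lem:bipDeletion}(1)), where the role played there by the $l$ generic columns associated with a low-degree vertex is now played by the $l$ generic columns associated with a low-multiplicity ridge.
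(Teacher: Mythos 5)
Your argument is correct and follows essentially the same route as the paper: both hinge on the observation that $M(K,l)$ is obtained from $M(L,l)$ by adjoining the single $l$-column block for $G$ together with the $s\leq l$ rows for the facets containing $G$, with the new columns vanishing on the old rows and forming a generic $s\times l$ block at the intersection with the new ones. The paper phrases this as the direct rank inequality $\rank M(K,l)\geq \rank M(L,l)+s$ while you run the equivalent contradiction argument by restricting a putative dependence to the $G$-block; the two formulations are interchangeable. (One small point: the linear independence of $\theta_{v_1},\dots,\theta_{v_t}$ for $t\le l$ only needs these to be generic vectors with disjoint coordinate supports in $\Theta$, not that the $v_i$ share a color class -- balancedness guarantees the latter, but it is not what the genericity argument actually uses.)
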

\begin{proof}
Let $s\leq l$ denote the number of facets of $K$ that contain $G$.
The matrix $M(K,l)$ is obtained from $M(L,l)$ by adjoining $l$ columns
corresponding to $G$ and $s$ rows corresponding to the
facets containing $G$. These new $l$ columns consist of zeros followed by
a generic $s\times l$ block (at the intersection with the new $s$ rows).
Thus $\rank M(K,l)\geq\rank M(L,l)+s$. Since by our assumption on $L$,
$\rank M(L,l)$ equals the number of rows of $M(L,l)$, the quantity
$\rank M(L,l)+s$ coincides with the number of rows of $M(K,l)$.
Hence, the rows of $M(K,l)$ are linearly independent.
\end{proof}

In the rest of the section, we gather some evidence in favor of
Conjecture \ref{conj:balancedKalaiSarkaria}. Specifically, we consider
certain basic constructions on balanced simplicial complexes and their effect
on balanced shifting. We start with the join operation. All balanced shiftings in
the rest of this section are computed w.r.t.~$(2,\ldots,2)$-admissible orders.

\begin{lem}\label{lem:join}
Let $K$ be a $k$-dimensional balanced complex embeddable in $\S^{2k}$,
and let $L$ be any $l$-dimensional balanced complex.
Then $K*L$ is a $(k+l+1)$-dimensional balanced complex embeddable in
the $2(k+l+1)$-sphere. Moreover, if $K$ satisfies the conclusion of
Conjecture \ref{conj:balancedKalaiSarkaria}, then so does $K*L$.
\end{lem}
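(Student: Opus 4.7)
The plan is to handle the three claims---dimension/balancedness, embeddability, and preservation of the Conjecture's conclusion---in turn. The first is immediate from the definitions: concatenating the $(k+1)$-coloring of $K$ with the $(l+1)$-coloring of $L$ gives a $(k+l+2)$-coloring of $K*L$, and the facets of $K*L$, being unions of a facet of $K$ (size $k+1$) and a facet of $L$ (size $l+1$), have size $k+l+2$, so $\dim(K*L) = k+l+1$.

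For the embeddability claim, I will invoke two standard facts: any $l$-dimensional simplicial complex PL embeds in $\R^{2l+1}$ by a general-position argument (hence in $\S^{2l+1}$), and the join of spheres satisfies $\S^a * \S^b \cong \S^{a+b+1}$. Combining these with the hypothesis $K \hookrightarrow \S^{2k}$ yields
\[
K*L \;\hookrightarrow\; \S^{2k} * \S^{2l+1} \;\cong\; \S^{2(k+l+1)},
\]
which is what is needed.

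The heart of the proof is the transfer of the Conjecture's conclusion from $K$ to $K*L$. The key lemma I will establish is that, for any $(2,\ldots,2)$-admissible order on $V(K*L)$ that lists the $K$-colors first (and extends given admissible orders on $V(K)$ and $V(L)$), one has
\[
\bigl((K*L)^b\bigr)_{[k+1]} \;=\; K^b
\]
as complexes on $V(K)$. This rests on the Stanley--Reisner identification $\R[K*L] = \R[K] \otimes_{\R} \R[L]$---valid because every ``mixed'' subset $\sigma \cup \tau$ with $\sigma \subseteq V(K)$, $\tau \subseteq V(L)$ is automatically a face of $K*L$, so the only monomial relations are those of $K$ and those of $L$ separately---combined with the observation that for any $T \subseteq [k+1]$ the graded component $\R[K*L]_{e_T}$ equals $\R[K]_{e_T}$, spanned by the same $\theta$-monomials under the same induced lex order. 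Consequently the greedy lex basis in multidegree $e_T$ used to define $(K*L)^b$ coincides with the one defining $K^b$, so the $T$-faces of $(K*L)^b$ equal the $T$-faces of $K^b$.

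With this restriction identity in hand, the preservation claim follows by contraposition. If $(K*L)^b$ contained $[3]^{*(k+l+2)}$, then restricting to the $k+1$ colors of $K$ would yield the corresponding restriction of $[3]^{*(k+l+2)}$, which is $[3]^{*(k+1)}$; hence $[3]^{*(k+1)} \subseteq ((K*L)^b)_{[k+1]} = K^b$, contradicting the Conjecture's conclusion for $K$. I expect the main obstacle to be a careful verification that the greedy lex basis really splits across the tensor factorization; concretely, one must check that with $K$-colors ordered first, the lex order on colorful $(k+l+2)$-faces of $K*L$ refines on the $K$-part the lex order on colorful $(k+1)$-faces of $K$, so that any lex-earlier relation in $\R[K]_{e_{[k+1]}}$ lifts to a lex-earlier relation in $\R[K*L]_{e_{[k+l+2]}}$ after multiplication by an $L$-monomial, and conversely that no ``cross'' relation can eliminate a $K^b$-basis element---both being consequences of the linear independence of a tensor product of linearly independent sets.
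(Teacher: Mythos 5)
Your proof is correct and follows the same overall strategy as the paper: the same embeddability argument via $\S^{2k} * \S^{2l+1} \cong \S^{2(k+l+1)}$, the same use of the Stanley--Reisner tensor factorization $\R[K*L] = \R[K] \otimes_\R \R[L]$, and the same contrapositive restriction-to-$K$-colors argument. The one real difference is that the paper asserts the stronger identity $(K*L)^b = K^b * L^b$, whereas you only prove the restriction identity $\bigl((K*L)^b\bigr)_{[k+1]} = K^b$. Your version is actually cleaner for the purpose at hand: it only requires examining multidegrees $e_T$ with $T \subseteq [k+1]$, where the $\theta$-monomials, the graded component, and the induced lex order visibly coincide with those of $K$ --- there is no need to analyze mixed multidegrees at all. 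The full join identity is a nice fact, but it isn't necessary, and avoiding it sidesteps the very verification you flag as ``the main obstacle.'' In fact your final paragraph, worrying about how lex-earlier relations in $\R[K]_{e_{[k+1]}}$ lift to $\R[K*L]_{e_{[k+l+2]}}$, is unnecessary for your own argument: once you have $\bigl((K*L)^b\bigr)_{[k+1]} = K^b$ the conclusion is immediate, and that identity never looks at multidegree $e_{[k+l+2]}$. So you could simply delete that paragraph.
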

\begin{proof}
As any $l$-dimensional simplicial complex
embeds in the $(2l+1)$-sphere, our assumption on $K$ implies that
$K*L$ embeds in the $2(k+l+1)$-sphere.
Assume that $K^b$ and $L^b$ are computed w.r.t.~linear
$(2,\ldots,2)$-admissible orders $<_K$ and $<_L$, respectively,
and that $(K*L)^b$ is computed
w.r.t.~a linear order $<$ that extends the partial order $<_K \uplus <_L$.
It then follows from the definition of the balanced shifting that
$(K*L)^b=K^b * L^b$. Thus, if $K^b$ does not contain $[3]^{*(k+1)}$, then
 $K^b*L^b$ does not contain $[3]^{*(k+l+2)}$  as it does not
even contain its subcomplex $[3]^{*(k+1)}$ on the first $k+1$ colors.
\end{proof}

Next we consider the effect of certain subdivisions. To do so,
for a balanced complex $L$, we use the balanced rigidity matrix
$M(L):=M(L,2)$.

Let $K$ be a pure balanced $d$-dimensional complex and $\sigma$ a face of $K$
that is not a vertex. Let $S$ be any pure balanced complex of the same dimension as
$\sigma$ and assume that $S$ has a missing facet $X$.
Identify the vertices of this missing facet with the correspondingly colored vertices
of $\sigma$ and define \[K'=\antist_K(\sigma) \cup (S*\lk_K(\sigma)).\]
In other words, $K'$ is obtained from $K$ by removing the star of $\sigma$,
$\overline{\sigma} * \lk_K(\sigma)$, and replacing it with $S*\lk_K(\sigma)$.
Then $K'$ is a pure balanced $d$-dimensional complex.
Further, if $S$ is a ball whose boundary coincides with that of $X$ (i.e,
$S$ is obtained from a balanced sphere by removing one facet, $X$),
then $K'$ is homeomorphic to $K$.

\begin{prop}\label{prop:sdLink}
If $(\antist_K(\sigma))^b$ does not contain $[3]^{*(d+1)}$ and $(\lk_K(\sigma))^b$
does not contain $[3]^{*(d-|\sigma|+1)}$, then $(K')^b$ does not contain
$[3]^{*(d+1)}$.
\end{prop}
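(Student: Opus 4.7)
The plan is to invoke Lemma~\ref{does_not_contain}: it suffices to prove that the facet-ridge matrix $M(K',2)$ has linearly independent rows. Write $K' = L_1 \cup L_2$ with $L_1 := \antist_K(\sigma)$ and $L_2 := S * \lk_K(\sigma)$. A direct check, using that $\sigma$ is a missing facet of $S$, shows $L_1 \cap L_2 = \partial\sigma * \lk_K(\sigma)$, a pure complex of dimension $d-1$.

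First I would record two preparatory row-independence statements. Since $(L_1)^b \nsupseteq [3]^{*(d+1)}$ by hypothesis, the rows of $M(L_1^{(d)},2)$ are linearly independent, where $L_1^{(d)}$ denotes the pure $d$-dimensional subcomplex of $L_1$; this uses Lemma~\ref{does_not_contain} together with the monotonicity of balanced shifting under subcomplex inclusion. For $L_2$, an extension of Lemma~\ref{lem:join} to an arbitrary second factor yields $(S * \lk_K\sigma)^b = S^b * (\lk_K\sigma)^b$, and any embedded $[3]^{*(d+1)}$ inside such a join splits by colors into $[3]^{*|\sigma|} * [3]^{*(d-|\sigma|+1)}$, the second factor of which is forbidden inside $(\lk_K\sigma)^b$ by the second hypothesis. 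Hence $M(L_2,2)$ also has linearly independent rows.

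Next I would exploit the block structure of $M(K',2)$. Sort facets of $K'$ into old (facets of $L_1^{(d)}$, all inside $V(K)$) and new (facets of $L_2$, each using a vertex of $V(S)\setminus V(\sigma)$ since $\sigma \notin S$); sort the ridge columns into those in $L_1 \setminus L_2$, in $L_1 \cap L_2$, and in $L_2 \setminus L_1$. This gives the block form
\[
M(K',2) \;=\; \begin{pmatrix} A & B & 0 \\ 0 & C & D \end{pmatrix},
\]
where $[A \mid B] = M(L_1^{(d)},2)$ and $[C \mid D] = M(L_2,2)$. The upper-right zero block is immediate, and the lower-left one follows from the fact that the only ridges of a facet $\tau_1\cup\tau_2 \in L_2$ lying entirely in $V(K)$ are those of the form $(\sigma-u)\cup\tau_2 \in \partial\sigma * \lk_K(\sigma)$, obtained by deleting the unique new vertex of $\tau_1$ (and only when $\tau_1$ contains exactly one new vertex).

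The main obstacle is concluding row-independence from this block data. A left-dependence $(x_1,x_2)$ yields only the three equations $x_1 A = 0$, $x_2 D = 0$, and $x_1 B + x_2 C = 0$, which are strictly weaker than $x_1[A \mid B]=0$ and $x_2[C \mid D]=0$. To finish, I would exploit the sparse shape of $C$: the row of $C$ indexed by $F = \tau_1 \cup \tau_2$ is nonzero only when $\tau_1 = (\sigma - u)\cup\{v\}$ for a single new vertex $v$ of color $c(u)$, in which case its unique nonzero block sits at the shared ridge $(\sigma-u)\cup\tau_2$ with value $\theta_v$. Coupled with $x_2 D = 0$, the shared-ridge identities decouple, for each fixed facet $\rho$ of $\lk_K(\sigma)$ and each $u \in V(\sigma)$, into stress relations for $S$ at the ridge $\sigma - u$ driven by the corresponding block of $x_1 B$. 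Combining this decoupling with the independence of the rows of $M(L_2,2)$ and the genericity of $\Theta$ forces $x_2 = 0$; then $x_1 A = 0$ and $x_1 B = 0$ give $x_1[A\mid B]=0$, whence $x_1 = 0$ by the independence of the rows of $M(L_1^{(d)},2)$. This concluding decoupling is the higher-dimensional analog of the role played by Lemma~\ref{intersection} in the proof of the bipartite Gluing Lemma~\ref{lem:bipGluing}.
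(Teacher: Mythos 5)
Your decomposition $K' = L_1 \cup L_2$ with $L_1 = \antist_K(\sigma)$ and $L_2 = S*\lk_K(\sigma)$ and the block form
\[
M(K',2) \;=\; \begin{pmatrix} A & B & 0 \\ 0 & C & D \end{pmatrix}
\]
are set up correctly, and you rightly identify the obstacle: from $x_1A=0$, $x_2D=0$, $x_1B+x_2C=0$, it is \emph{not} formally true that $x_2[C\mid D]=0$. But the remedy you propose does not close this gap. Knowing that $[C\mid D]=M(L_2,2)$ has independent rows only tells you that $x_2[C\mid D]=0$ forces $x_2=0$; here $x_2[C\mid D]=(-x_1B,\,0)$, which you have no reason to believe is zero. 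The ``decoupling into stress relations for $S$'' sketch does not resolve this: the shared-ridge identity for $G=(\sigma-u)\cup\rho$ still carries the uncontrolled driving term $(x_1B)_G$, and there is no hypothesis on $S$ (which is an arbitrary pure balanced complex with $\sigma$ as a missing facet) that would force the $S$-side coefficients to vanish. In the bipartite Gluing Lemma this issue is handled by an extra hypothesis that $G_1\cap G_2$ be rigid, which lets one push a stress supported on both sides into the intersection; here $L_1\cap L_2=\partial\sigma*\lk_K(\sigma)$ has dimension $d-1$ and no such hypothesis is available, so the gluing-lemma paradigm does not transfer.

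What actually works --- and is the paper's route --- is to exploit a finer structure inside $D$ alone. Partition the $L_2$-facets as $\F_H=\{H\cup\rho : \rho\text{ a facet of }\lk_K(\sigma)\}$, one block per facet $H$ of $S$, and look only at the columns of $D$ indexed by ridges $H\cup R$ with $R$ a ridge of $\lk_K(\sigma)$. A facet of $L_2$ containing $H\cup R$ must be of the form $H\cup\rho$ with $\rho\supset R$, so for each fixed $H$ these columns have support precisely in the rows $\F_H$, and the resulting $\F_H\times\{H\cup R\}$ block is a copy of $M(\lk_K(\sigma),2)$. Thus this sub-block of $D$ is block-diagonal with diagonal blocks $M(\lk_K(\sigma),2)$, and $x_2D=0$ already forces $x_2|_{\F_H}=0$ for every $H$ by the link hypothesis; hence $x_2=0$, then $x_1B=0$, and $x_1[A\mid B]=0$ forces $x_1=0$ by the antistar hypothesis. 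Note that this uses only the hypothesis on $\lk_K(\sigma)$ --- your detour through the join lemma to establish independence of the rows of $M(L_2,2)$ is not needed (and indeed that independence is itself a consequence of the block-diagonal observation), and your finer column split into $L_1\setminus L_2$, $L_1\cap L_2$, $L_2\setminus L_1$ is also unnecessary once the $\F_H$-decomposition of the rows is in place.
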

\begin{proof}
Denote the facets of $S$ by $H_1,\ldots, H_m$. The facets of $K'$ then fall in two
categories: (a) the facets of $\antist_K(\sigma)$, and (b) for each
$i=1,\ldots, m$, the facets of the form $G\cup H_i$ where $G$ is a facet of
$\lk_K(\sigma)$; we denote this $i$-th set of facets by $\F_i$. Similarly,
the ridges of $K'$ are of the following types:
(a) the ridges of $\antist_K(\sigma)$,
(b) for each $i=1,\ldots, m$, the ridges of the form $R\cup H_i$ where $R$ is a
ridge of $\lk_K(\sigma)$ ---  we denote this $i$-th set of ridges by $\RR_i$, and
(c) all remaining ridges.  In the following we will ignore the ridges of type (c);
specifically, we will show that the restriction of $M(K')$ to the columns of
the ridges of types (a) and (b) already has independent rows.

Consider the balanced rigidity matrix $M(K')$. Its restriction to facets/ridges
of $\antist_K(\sigma)$ coincides with the restriction of $M(K)$ to the same rows and
columns, and thus, by our assumption on $\antist_K(\sigma)$, has rank
$f_d(\antist_K(\sigma))$ (see Lemma \ref{does_not_contain}).
For each $i=1,\ldots, m$, the restriction of
 $M(K')$ to the columns labeled by the ridges from $\RR_i$ consists of the block
$M(\lk_K(\sigma))$ positioned at the intersection with the rows labeled by
the elements of $\F_i$, and zeros everywhere else. By our assumption on the
link, the rank of such a block equals the number of facets of the link.
As all these blocks have pairwise disjoint sets of columns and rows,
it follows that
\begin{eqnarray*}
\rank(M(K')) &\geq& \rank(M(\antist_K(\sigma))+ m \cdot \rank(M(\lk_K(\sigma))\\
&=& f_d(\antist_K(\sigma))+ f_{|\sigma|-1}(S)
\cdot f_{d-|\sigma|}(\lk_K(\sigma))=f_d(K').
\end{eqnarray*}
Thus the above inequality is, in fact, equality, and
$(K')^b$ does not contain $[3]^{*(d+1)}$.
\end{proof}

Finally, we show that if $S$ is obtained from
a $(|\sigma|-1)$-dimensional balanced pseudomanifold
 by removing one facet, $X$, then the condition on the link in
Proposition \ref{prop:sdLink} can be dropped. More generally:

\begin{prop}\label{prop:sdPseudomanifold}
Let $K$ be a pure balanced $d$-dimensional complex and $\sigma$ a face
of $K$ that is not a vertex. Let $S$
be a pure $(|\sigma|-1)$-dimensional balanced simplicial complex
with a missing facet $X$ and such that
each ridge of $S$ is in at most two facets.
Let $K'=\antist_K(\sigma) \cup (S*\lk_K(\sigma))$.
 If $(\antist_K(\sigma))^b$ does not contain $[3]^{*(d+1)}$,
then $(K')^b$ does not contain $[3]^{*(d+1)}$.
\end{prop}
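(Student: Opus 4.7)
The plan is to follow the approach of Proposition \ref{prop:sdLink} by showing that the rows of the balanced rigidity matrix $M(K',2)$ are linearly independent; the conclusion then follows from Lemma \ref{does_not_contain}. Write $A=\antist_K(\sigma)$ and $L=\lk_K(\sigma)$, and let $H_1,\dots,H_m$ denote the facets of $S$. The facets of $K'$ split into facets of $A$ and ``S-facets'' $H_i\cup G$ for $G$ a facet of $L$. Suppose we have a linear dependence $\sum_F\lambda_F R_F+\sum_{i,G}\mu_{i,G}R_{H_i\cup G}=0$ among the rows of $M(K',2)$.

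The key new ingredient, compared with the proof of Proposition \ref{prop:sdLink} (where the ``type (c)'' ridges $H'\cup G$ were ignored), is to exploit exactly these ridges. Focus on columns indexed by ridges of the form $H'\cup G$, where $H'$ is a ridge of $S$ that contains at least one vertex outside $X$, and $G$ is a facet of $L$. Such a ridge contains a vertex of $S$ that is not a vertex of $K$, hence is not a face of $A$, so the A-facet rows contribute zero to the corresponding column. By the pseudomanifold hypothesis, at most two facets $H_i$ of $S$ contain $H'$, so the column equation reduces to
\[
\sum_{H_i\supseteq H'}\mu_{i,G}\,\theta_{H_i\setminus H'}=0
\]
with at most two summands. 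Each $\theta_{H_i\setminus H'}$ is a generic vector in $\R^2$ of the color absent from $H'$; two such generic $2$-vectors are linearly independent, forcing every $\mu_{i,G}$ in the sum to vanish.

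Now, because $X\notin S$, every facet $H_i$ of $S$ contains some vertex $v$ outside $X$. Choosing any $v'\in H_i\setminus\{v\}$, the set $H':=H_i\setminus\{v'\}$ is a ridge of $S$ containing $v$, hence not contained in $\sigma$. The preceding argument then gives $\mu_{i,G}=0$, and since $(i,G)$ was arbitrary, every $\mu$-coefficient vanishes. The dependence now reads $\sum_F\lambda_F R_F=0$, and restricting to columns indexed by ridges of $A$ produces a relation among the rows of $M(A,2)$, which are linearly independent by the hypothesis $[3]^{*(d+1)}\not\subseteq A^b$ (via Lemma \ref{does_not_contain}); hence all $\lambda_F$ vanish. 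The main subtlety is the identification of ridges $H'\cup G$ as not belonging to $A$---this rests on the fact that the vertices of $S$ outside $X$ are not vertices of $K$---and the pseudomanifold condition is used essentially in keeping the key column equations to at most two unknowns, which is what permits their solution by genericity.
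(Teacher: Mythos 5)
Your proof is correct and takes essentially the same approach as the paper: both reduce via Lemma~\ref{does_not_contain} to showing that the rows of $M(K',2)$ are independent, and both exploit the same three facts --- each ridge of $S$ lies in at most two facets, each facet of $S$ has a ridge containing a vertex of $S\setminus X$, and such vertices are new (not in $V(K)$), so the corresponding ridges of $K'$ do not touch the $\antist_K(\sigma)$-rows. The only stylistic difference is that you run a direct column-by-column elimination of coefficients (first the $\mu_{i,G}$, then the $\lambda_F$), whereas the paper packages the same reasoning as a block-rank count through the intermediate matrix $M^*(S)$ whose row-independence it attributes to the argument of Lemma~\ref{lem:deletion--HighDim}; your version makes that independence explicit.
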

\begin{proof}
Delete from the matrix $M(S)$ all columns corresponding to the ridges that are
subsets of $X$; denote the resulting matrix by $M^*(S)$.
Note that $M^*(S)$ has no zero rows: this is because every facet of $S$ has at least
one ridge that is not a subset of $X$. Moreover, every ridge is in at most two
facets, and so the rows of $M^*(S)$ are linearly independent (the same argument as in
the proof of Lemma \ref{lem:deletion--HighDim} applies.)

Now let the facets of $\lk_K(\sigma)$ be $H_1,\ldots, H_k$. The set of facets
of $K'$ consists of (a) the facets of $\antist_K(\sigma)$, and (b) for each
$i=1,\ldots, k$, the facets of the form $G\cup H_i$, where $G$ is a facet of $S$;
denote this $i$-th set of facets by $\F_i$. The set of ridges of $K'$ consists
of (a) the ridges of $\antist_K(\sigma)$, (b) for each $i=1,\ldots, k$, the ridges of
the form $R\cup H_i$ where $R$ is a ridge of $S$ not contained in $X$ --- denote
this $i$-th set of ridges by $\RR_i$, (c) all remaining ridges, which we will
ignore.

We again consider the balanced rigidity matrix of $K'$.
As in Proposition \ref{prop:sdLink}, the restriction of $M(K')$ to
facets/ridges of $\antist_K(\sigma)$ has rank $f_d(\antist_K(\sigma))$.
For each $i=1,\ldots, k$, the restriction of $M(K')$ to the columns labeled by
the ridges from $\RR_i$ consists of the block $M^*(S)$
(positioned at the intersection with the rows labeled by the elements of
$\F_i$) and zeros everywhere else. By the first paragraph of this proof,
the rank of such a block equals the number of facets of $S$. As all these
blocks have pairwise disjoint sets of columns and rows, we obtain that
\begin{eqnarray*}
\rank(M(K')) &\geq& \rank(\antist_K(\sigma))+ k \cdot \rank(M^*(S))\\
&=& f_d(\antist_K(\sigma))+ f_{d-|\sigma|}(\lk_K(\sigma))
\cdot f_{|\sigma|-1}(S)=f_d(K').
\end{eqnarray*}
The result follows.
\end{proof}

We conclude with a conjecture on linklessly embedable complexes.
A high-dimensional analog of Sachs' result \cite{Sachs83} on
linkless embeddability is due to Skopenkov \cite[Lemma 1]{Skopenkov}.
It asserts that $[4]^{*(d+1)}$ is not linklessly embeddable in  $\R^{2d+1}$.
This theorem leads us to pose the
following generalization of Conjecture \ref{conj:bipLinklessGraph},
analogous to Conjecture \ref{conj:balancedKalaiSarkaria}.

\begin{conj}\label{conj:linkless-shifting}
Let $K$ be a $d$-dimensional balanced simplicial complex that is linklessly
embeddable in $\mathbb{R}^{2d+1}$ and let $<$ be a $(3,\ldots,3)$-admissible
order. Then $K^{b,<}$ does not contain $[4]^{*(d+1)}$.
\end{conj}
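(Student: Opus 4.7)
The plan is to reduce, via Lemma \ref{does_not_contain} with $l=3$, the conclusion to the linear independence of the rows of the balanced rigidity matrix $M(K,3)$, and then to proceed by induction on $f_d(K)$ using the high-dimensional Deletion Lemma (Lemma \ref{lem:deletion--HighDim}). For the inductive step I would seek a ridge $G$ of $K$ contained in at most $3$ facets; since the antistar $\antist_K(G)$ is a subcomplex of $K$, it remains linklessly embeddable in $\R^{2d+1}$, so the inductive hypothesis applies to it, and Lemma \ref{lem:deletion--HighDim} then delivers the conclusion for $K$.

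Finding such a ridge is the crux. A double count of (facet, ridge)-incidences shows that a ridge contained in at most $3$ facets must exist whenever $f_d(K) < \tfrac{4}{d+1} f_{d-1}(K)$. For $d=1$ this amounts to $|E|<2|V|$, which is stronger than even the sharp Sachs-type inequality $|E|\leq 3|V|-9$ stated in Conjecture \ref{conj:bipLinklessGraph}. The deletion/induction template therefore cannot recover the sharp conjecture, mirroring the gap between the known $(7,7)$-stress freeness result and the conjectural $(3,3)$-statement in the graph case. A weaker, unconditional form of the conjecture is however accessible along the same lines: a Skopenkov-style high-dimensional analog of Mader's $K_6$-minor-free edge bound \cite{Mader-K6free}, combined with the forbidden-minor framework of \cite{Wagner11:HomologicalMinor, Skopenkov} for linklessly embeddable complexes, should yield an inequality $f_d(K) \leq C_d f_{d-1}(K)$, from which induction via Lemma \ref{lem:deletion--HighDim} delivers that $K^{b,<}$ avoids $[\lceil C_d(d+1)\rceil+1]^{*(d+1)}$ --- the natural higher-dimensional analog of the $(7,7)$-stress freeness lemma from the graph case.

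To close the gap and establish the conjecture on the nose, a more structural approach seems necessary. Following the template of Theorem \ref{theo:BipartiteGluck}, one could try to formulate a notion of \emph{maximal} linklessly embeddable balanced $d$-complex, establish a contraction-based reduction analogous to Lemma \ref{lem:bipartiteContraction}, and induct on the number of vertices. Alternatively, and in the spirit of Conjecture \ref{conj:vanKampenBalancedKalaiSarkaria}, one could attempt to identify a van Kampen--type algebraic obstruction to linkless embeddability of balanced $d$-complexes (Skopenkov's linking form being a natural candidate), show that it is nontrivial on $[4]^{*(d+1)}$, and prove that its vanishing is preserved under $K \mapsto K^b$. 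The main obstacle is that neither program has been carried out even for $d=1$: both the sharp Sachs-type face-count bound and a van Kampen--type linking obstruction compatible with balanced shifting remain open, and a complete proof will likely require fundamentally new input on the structure of linklessly embeddable complexes.
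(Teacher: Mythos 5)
The statement you were asked to prove is Conjecture \ref{conj:linkless-shifting}, which the paper itself presents as \emph{open}: it is the final conjecture of Section \ref{sec:balancedKalai-Sarkaria}, motivated by Skopenkov's high-dimensional analog of Sachs' theorem, and the paper offers no proof of it (indeed, even its $d=1$ specialization, Conjecture \ref{conj:bipLinklessGraph}, is explicitly noted to be open, with only the much weaker $(7,7)$-stress freeness established for linklessly embeddable bipartite graphs). You have correctly recognized this and, rather than fabricating a proof, have given an honest and accurate assessment of the available machinery.

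Your analysis of why the deletion/induction template cannot reach the sharp statement is sound: the double count $(d+1)f_d(K)\geq 4 f_{d-1}(K)$ shows that finding a ridge in at most three facets requires $f_d < \tfrac{4}{d+1}f_{d-1}$, which for $d=1$ is $|E|<2|V|$ --- strictly stronger than the conjectural Sachs-type bound $|E|\leq 3|V|-9$ --- so induction via Lemma \ref{lem:deletion--HighDim} alone cannot close the gap. This correctly mirrors the situation in the paper, where Mader's bound ($<4N$ edges for $K_6$-minor-free graphs) gives a vertex of degree at most $7$ and hence only $(7,7)$-stress freeness, not the conjectured $(3,3)$-stress freeness. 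Your suggested routes --- a structural ``maximal linklessly embeddable'' contraction argument in the style of Theorem \ref{theo:BipartiteGluck}, or a linking-form obstruction compatible with balanced shifting in the spirit of Conjecture \ref{conj:vanKampenBalancedKalaiSarkaria} --- are reasonable and align with the paper's own framing; as you say, neither has been carried out even for $d=1$. In short: there is no gap in your reasoning, because you have correctly refrained from claiming a proof of an open conjecture and instead delineated precisely what is within reach (a weak $[C+1]^{*(d+1)}$-avoidance bound via deletion, analogous to Proposition \ref{prop:Heawood-->balancedKalaiSarkaria}) and what remains open.
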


\section*{Acknowledgments}
We thank Maria Chudnovsky and Paul Seymour for helpful discussions, and
Amit Singer for bringing \cite{SingerCucuringu} to our attention.
We are also grateful to the referee for a very thorough reading of
our paper and many insightful remarks.

\bibliography{gbiblio}
\bibliographystyle{plain}
\end{document}